\newtheorem{theorem}{Theorem}[section]
\newtheorem{thm}{Theorem}
\newtheorem{lemma}[theorem]{Lemma}
\newtheorem{prop}{Proposition}
\theoremstyle{definition}
\newtheorem{definition}[theorem]{Definition}
\newtheorem{rmk}{Remark}
\newcommand{\be}{\begin{equation}}
\newcommand{\ee}{\end{equation}}
\newcommand{\bsubeq}{\begin{subequations}}
	\newcommand{\esubeq}{\end{subequations}}
\renewcommand{\div}{\text{div}}
\newcommand{\ds}{\displaystyle}
\newcommand{\calL}{{\mathcal{L}}}
\newcommand{\calN}{{\mathcal{N}}}
\newcommand{\calF}{{\mathcal{F}}}
\newcommand{\calD}{{\mathcal{D}}}
\newcommand{\calK}{{\mathcal{K}}}
\newcommand{\calA}{{\mathcal{A}}}
\newcommand{\calV}{{\mathcal{V}}}
\newcommand{\BA}{\mathbb{A}}
\newcommand{\BB}{\mathbb{B}}
\newcommand{\BR}{\mathbb{R}}
\newcommand{\BC}{\mathbb{C}}
\newcommand{\wti}{\widetilde}
\newcommand{\bpm}{\begin{pmatrix}}
	\newcommand{\epm}{\end{pmatrix}}
\newcommand{\bbm}{\begin{bmatrix}}
	\newcommand{\ebm}{\end{bmatrix}}
\numberwithin{equation}{section}
\numberwithin{thm}{section}
\numberwithin{rmk}{section}
\numberwithin{prop}{section}
\newcommand{\lso}{L^{q}_{\sigma}(\Omega)}
\newcommand{\lo}[1]{L^{#1}_{\sigma}(\Omega)}
\newcommand{\ls}{L^{q}_{\sigma}}
\newcommand{\phiij}{\{\phi_{ij}\}_{j=1}^{\ell_i}}
\newcommand{\phiijm}{\{\phi_{ij}\}_{i=1,j=1}^{M \ \ \ell_i}}
\newcommand{\phiijs}{\{\phi_{ij}^{*}\}_{j=1}^{\ell_i}}
\newcommand{\phiijms}{\{\phi_{ij}^{*}\}_{i=1,j=1}^{M \ \ \ell_i}}
\newcommand{\Bso}{B^{2-\rfrac{2}{p}}_{q,p}(\Omega)}
\newcommand{\Bt}{\widetilde{B}^{2-\rfrac{2}{p}}_{q,p}}
\newcommand{\Bto}{\widetilde{B}^{2-\rfrac{2}{p}}_{q,p}(\Omega)}
\newcommand{\xtpq}{X^T_{p,q}}
\newcommand{\xtpqs}{X^T_{p,q,\sigma}}
\newcommand{\xipqs}{X^{\infty}_{p,q,\sigma}}
\newcommand{\ytpq}{Y^T_{p,q}}
\newcommand{\yipq}{Y^{\infty}_{p,q}}
\newcommand{\xipq}{X^{\infty}_{p,q}}
\newcommand{\woiq}{W^{1,q}_0}
\newcommand{\wtq}{W^{2,q}}
\newcommand\rfrac[2]{{}^{#1}\!/_{#2}}
\newcommand{\Fs}{F_{\sigma}}
\newcommand{\lqaq}{\big( \lso, \calD(A_q) \big)_{1-\frac{1}{p},p}}
\newcommand{\norm}[1]{\left\lVert#1\right\rVert}
\newcommand{\abs}[1]{\left\lvert#1\right\rvert}
\newcommand{\lplqs}{L^p \big( 0,\infty; \lso \big)}
\begin{document}

\title{Uniform stabilization of Navier-Stokes equations in critical $L^q$-based Sobolev and Besov spaces by finite dimensional interior localized feedback controls \thanks{The research of I.L. and R.T. was partially supported by the National Science Foundation under Grant DMS-1713506. The research of B.P. was supported by the ERC advanced grant 668998 (OCLOC) under the EU's H2020 research program.}}	

\author{Irena Lasiecka \thanks{Department of Mathematical Sciences, University of Memphis, Memphis, TN 38152 USA and IBS, Polish Academy of Sciences, Warsaw, Poland.}
	\and Buddhika Priyasad \thanks{Institute for Mathematics and Scientific Computing, University of Graz, Heinrichstrasse 36, A-8010 Graz, Austria. (b.sembukutti-liyanage@uni-graz.at).}
	\and Roberto Triggiani \thanks{Department of Mathematical Sciences, University of Memphis, Memphis, TN 38152 USA.}}

	\date{}
\maketitle		
	\begin{abstract}
		\noindent We consider 2- or 3-dimensional incompressible Navier-Stokes equations defined on a bounded domain $\Omega$, with no-slip boundary conditions and subject to an external force, assumed to cause instability. We then seek to uniformly stabilize such N-S system, in the vicinity of an unstable equilibrium solution, in critical $L^q$-based Sobolev and Besov spaces, by finite dimensional feedback controls. These spaces are `close' to $L^3(\Omega)$ for $d=3$. This functional setting is significant. In fact, in the case of the uncontrolled N-S dynamics, extensive research efforts have recently lead to the space $L^3(\BR^3)$ as being a critical space for the issue of well-posedness in the full space. Thus, our present work manages to solve the stated uniform stabilization problem for the controlled N-S dynamics in a correspondingly related function space setting. In this paper, the feedback controls are localized on an arbitrarily small open interior subdomain $\omega$ of $\Omega$. In addition to providing a solution of the uniform stabilization problem in such critical function space setting, this paper manages also to much improve and simplify, at both the conceptual and computational level, the solution given in the more restrictive Hilbert space setting in the literature. Moreover, such treatment sets the foundation for the authors' final goal in a subsequent paper. Based critically on said low functional level where compatibility conditions are not recognized, the subsequent paper solves in the affirmative a presently open problem: whether uniform stabilization by localized tangential boundary feedback controls, which-in addition-are finite dimensional, is also possible in dim $\Omega = 3$.
	\end{abstract}
	
	
	
	\section{Introduction}\label{I-Sec-1}
	
	\subsection{Controlled Dynamic Navier-Stokes Equations}
	Let, at first, $\Omega$ be an open connected bounded domain in $\mathbb{R}^d, d = 2,3$ with sufficiently smooth boundary $\Gamma = \partial \Omega$.  More specific requirements will be given below. Let $\omega$ be an arbitrarily small open smooth subset of the interior $\Omega$, $\omega \subset \Omega$, of positive measure. Let $m$ denote the characteristic function of $\omega$: $m(\omega) \equiv 1, \ m(\Omega \backslash \omega) \equiv 0$. Consider the following controlled Navier-Stokes Equations with no-slip Dirichlet boundary conditions, where $Q =  (0, \infty) \times \Omega, \ \Sigma = (0,\infty) \times \Gamma$: 
	\begin{subequations}\label{I-1.1}
		\begin{align}
		y_t(t,x) - \nu \Delta y(t,x) +  (y \cdot \nabla)y + \nabla \pi(t,x) &= m(x)u(t,x) + f(x)  &\text{ in } Q \label{I-1.1a}\\ 
		\div \ y &= 0  &\text{ in } Q \label{I-1.1b}\\
		y &= 0 &\text{ on } \Sigma \label{I-1.1c}\\
		y(0,x) &= y_0(x) &\text{ in } \Omega \label{I-1.1d}
		\end{align}
	\end{subequations}
	
	\noindent \textbf{Notation:} As already done in the literature, for the sake of simplicity, we shall adopt the same notation for function spaces of scalar functions and function spaces of vector valued functions. Thus, for instance, for the vector valued ($d$-valued) velocity field $y$ or external force $f$, we shall simply write say $y,f \in L^q(\Omega)$ rather than $y,f \in (L^q(\Omega))^d$ or $y,f \in \mathbf{L}^q(\Omega)$. This choice is unlikely to generate confusion. By way of orientation, we state at the outset two main points. For the linearized $w$-problem (\ref{I-1.13}) below in the feedback form (\ref{I-2.16}), the corresponding well-posedness and global feedback uniform stabilization result, Theorem \ref{I-Thm-2.2}, holds in general for $1 < q < \infty$. Instead, the final, main well-posedness and feedback uniform, local stabilization result, Theorem \ref{I-Thm-2.5}, for the nonlinear problem (\ref{I-2.27}) or (\ref{I-2.28}) corresponding to the original problem (\ref{I-1.1}) will require $q > 3$, see (\ref{I-8.16}), in the $d=3$-case, hence $\displaystyle 1 < p < \frac{6}{5}$; and $q > 2$, in the $d = 2$-case, hence $1 < p < \frac{4}{3}$; see (\ref{I-1.16}).	Let $u \in L^p(0,T;L^q(\Omega))$ be the control input and $y = (y_1,\dots,y_d)$ be the corresponding state (velocity) of the system. Let $\nu>0$ be the viscosity coefficient. The function $v(t,x) = m(x)u(t,x)$ can be viewed as an interior controller with support in $Q_{\omega} = (0,\infty) \times \omega $. The initial condition $y_0$ and the body force $f \in L^q(\Omega)$ are given. The scalar function $\pi$ is the unknown pressure.
	
	\subsection{Stationary Navier-Stokes equations}
	The following result represents our basic starting point.
	
	\begin{thm}\label{I-Thm-1.1}
		Consider the following steady-state Navier-Stokes equations in $\Omega$
		\begin{subequations}\label{I-1.2}
			\begin{align}
			-\nu \Delta y_e +  (y_e.\nabla)y_e + \nabla \pi_e &= f &\text{ in }  \Omega \label{I-1.2a}\\ 
			\div \ y_e &= 0 &\text{ in }  \Omega\label{I-1.2b} \\
			y_e &= 0 &\text{ on } \Gamma \label{I-1.2c}
			\end{align}
		\end{subequations}
		Let $1 < q < \infty$. For any $f \in L^q(\Omega)$ there exits a solution (not necessarily unique) $(y_e,\pi_e) \in (W^{2,q}(\Omega) \cap W^{1,q}_{0}(\Omega)) \times (W^{1,q}(\Omega)/\mathbb{R})$.
	\end{thm}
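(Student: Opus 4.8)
The plan is to treat the quadratic convection term as a lower-order, compact perturbation of the stationary Stokes system and to couple the sharp $L^q$-elliptic theory for the latter with a topological fixed-point argument that tolerates large data $f$. First I record the linear theory: for $g\in\lso$, the stationary Stokes problem $-\nu\De y + \na\pi = g$, $\div\, y = 0$ in $\Omega$, $y = 0$ on $\Gamma$, has a unique solution $(y,\pi)\in(\wtq(\Omega)\cap\woiq(\Omega))\times(W^{1,q}(\Omega)/\BR)$ obeying $\norm{y}_{\wtq(\Omega)}+\norm{\na\pi}_{L^q(\Omega)}\le C\norm{g}_{L^q(\Omega)}$; equivalently the Stokes operator $A_q = -\nu P_q\De$, with $\calD(A_q)=\wtq(\Omega)\cap\woiq(\Omega)\cap\lso$, is an isomorphism onto $\lso$, where $P_q:L^q(\Omega)\to\lso$ is the Helmholtz--Leray projection (Cattabriga; Amrouche--Girault). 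Applying $P_q$ to (\ref{I-1.2}) turns the velocity equation into the fixed-point identity $y = T(y):= A_q^{-1}P_q\big(f-(y\cdot\na)y\big)$.

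Second, I would run a Leray--Schauder (Schaefer) scheme for $T$ on $X:=\calD(A_q)$. In dimension $d\le 3$ the Sobolev embeddings give $\norm{(y\cdot\na)y}_{L^q(\Omega)}\lesssim\norm{y}_{\wtq(\Omega)}^2$ for every $1<q<\infty$, so $y\mapsto P_q(y\cdot\na)y$ maps $X$ continuously into $\lso$; composing with the bounded $A_q^{-1}:\lso\to X$ and exploiting the compactness of $\calD(A_q)\hookrightarrow W^{1,r}(\Omega)$ for a suitable $r$ shows $T:X\to X$ is compact. Schaefer's theorem then reduces existence of a fixed point to a uniform a priori bound on $\{y\in X: y=\sigma T(y),\ 0\le\sigma\le1\}$.

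Third --- the crux --- I would produce that bound. Since the $L^q$ norm is not propagated by the equation, I would start from the dimension-independent energy estimate obtained by pairing with $y$ in $L^2$: the cancellation $\big((y\cdot\na)y,y\big)_{L^2(\Omega)}=0$, valid because $\div\, y=0$ and $y|_\Gamma = 0$, gives $\nu\norm{\na y}_{L^2(\Omega)}^2=(f,y)$ and hence an $H^1_0$ bound in terms of $\norm{f}_{H^{-1}(\Omega)}$. From this $H^1_0$ control I would bootstrap: by the embedding $W^{1,r}(\Omega)\hookrightarrow L^{r^*}(\Omega)$, $\tfrac1{r^*}=\tfrac1r-\tfrac1d$, the tensor $y\otimes y$ gains integrability, so $\div(y\otimes y)$ feeds the Stokes estimate of the first step and upgrades $y$ to a higher $W^{2,\cdot}$ exponent; iterating finitely many times reaches $q$ and closes $\norm{y}_{\wtq(\Omega)}\le\Phi(\norm{f}_{L^q(\Omega)})$ uniformly in $\sigma$. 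The pressure is then recovered from $\na\pi = f+\nu\De y-(y\cdot\na)y\in L^q(\Omega)$ by De Rham's theorem, yielding $\pi\in W^{1,q}(\Omega)/\BR$.

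The principal obstacle is making the a priori estimate work over the entire range $1<q<\infty$: the energy step needs $f\in H^{-1}(\Omega)$, which is automatic for $q\ge\tfrac{2d}{d+2}$ but for smaller $q$ (only relevant when $d=3$) must be secured through a preliminary approximation of $f$ together with a final bound depending on $\norm{f}_{L^q(\Omega)}$ alone, and the number of bootstrap steps depends on the location of $q$ relative to $d/2$ and $d$. Uniqueness is deliberately not asserted, since for large $f$ neither the energy bound nor the resulting fixed point need be unique.
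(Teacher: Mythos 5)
Your scheme --- sharp $L^q$ Stokes theory, the fixed-point map $T(y)=A_q^{-1}P_q\big(f-(y\cdot\nabla)y\big)$, Leray--Schauder, energy estimate plus bootstrap --- is the classical argument, and it does prove Theorem \ref{I-Thm-1.1} for $d=2$ in the whole range $1<q<\infty$ and for $d=3$ when $q\ge 6/5$. (For context: the paper offers no proof of this theorem; it quotes the case $q=2$ from \cite{CF:1980} and the general case from \cite{AR:2010}.) The genuine gap is the remaining range $d=3$, $1<q<6/5$, which your closing paragraph acknowledges but does not repair, and which cannot be repaired within your framework. Three things fail there at once. First, $L^q(\Omega)\not\subset H^{-1}(\Omega)$: for instance $f(x)=|x-x_0|^{-\alpha}$, $x_0\in\Omega$, with $5/2<\alpha<3/q$ (such $\alpha$ exist exactly when $q<6/5$) lies in $L^q(\Omega)$ but not in $H^{-1}(\Omega)$, so the pairing $(f,y)$ in your energy identity need not be finite. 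Second, the bound that Leray--Schauder demands must cover \emph{every} solution of $y=\sigma T(y)$ in $\calD(A_q)=W^{2,q}(\Omega)\cap W^{1,q}_0(\Omega)\cap\lso$, and for $q<6/5$ such functions need not have finite Dirichlet energy, since $W^{2,q}(\Omega)\hookrightarrow W^{1,3q/(3-q)}(\Omega)$ with $3q/(3-q)<2$; for an infinite-energy solution the identity $\nu\norm{\nabla y}_{L^2(\Omega)}^2=\sigma(f,y)$ carries no information, and for $q<9/8$ even the cancellation $\big((y\cdot\nabla)y,y\big)_{L^2(\Omega)}=0$ cannot be invoked, because $|y|^2|\nabla y|$ need not be in $L^1(\Omega)$. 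Third, your proposed fix by approximation cannot restore the estimate: if $f_n\to f$ in $L^q(\Omega)$ with $f_n\in L^{6/5}(\Omega)$, the solutions $y_n$ are controlled only through $\norm{f_n}_{H^{-1}(\Omega)}\le C\norm{f_n}_{L^{6/5}(\Omega)}$, and these norms must blow up whenever $f\notin H^{-1}(\Omega)$; no bound ``depending on $\norm{f}_{L^q(\Omega)}$ alone'' emerges from this scheme, and producing one is precisely the missing step, not a routine refinement.

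This sub-$6/5$, possibly infinite-energy regime is exactly why the paper cites \cite{AR:2010} instead of sketching a proof: existence for such singular data is obtained there by genuinely different techniques --- very weak solutions defined by transposition/duality for the linearized Stokes/Oseen problems, combined with splitting the data into a regular part plus a part that is small in a scaling-critical norm, so that a perturbation argument closes --- and not by the energy method. As written, your proposal establishes the theorem for $d=2$, $1<q<\infty$, and for $d=3$, $q\ge 6/5$, but not in the full range $1<q<\infty$, $d=3$, claimed in the statement.
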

	
	For the Hilbert case $q=2$, see \cite[Thm 7.3 p 59]{CF:1980} . For the general case $1 < q < \infty$, see \cite[Thm 5.iii p 58]{AR:2010}.
	
	\begin{rmk}
		It is well-known \cite{Lad:1969}, \cite{Li:1969}, \cite{Te:1979} that the stationary solution is unique when ``the data is small enough, or the viscosity is large enough" \cite[p 157; Chapt 2]{Te:1979} that is, if the ratio $\ds \rfrac{\norm{f}}{\nu^2}$ is smaller than some constant that depends only on $\Omega$ \cite[p 121]{FT:1984}. When non-uniqueness occurs, the stationary solutions depend on a finite number of parameters \cite[Theorem 2.1, p 121]{FT:1984} asymptotically in the time dependent case.
	\end{rmk}

	\begin{rmk}
		The case where $f(x)$ in (\ref{I-1.1a}) is replaced by $\ds \nabla g(x)$ is noted in the literature as arising in certain physical situations, where $f$ is a conservative vector field. The analysis of this relevant case is postponed to Remark \ref{I-Rmk-1.4}, at the end of Section \ref{I-Sec-1}.
	\end{rmk}
	
	\subsection{Main goal of the present paper}\label{I-Sec-1.3}
	\noindent For a given external force $f$, if the Reynolds number $\frac{1}{\nu}$ is sufficiently large, then the steady state solution $y_e$ in (\ref{I-1.2}) becomes unstable (in a quantitative sense to be made more precise in Section \ref{I-Sec-2.2} below) and turbulence occurs.\\
	
	\noindent The main goal of the present paper is then - at first qualitatively - \uline{to feedback stabilize the non-linear N-S model (\ref{I-1.1}) subject to rough (non-smooth) initial condition $y_0$, in the vicinity of an (unstable) equilibrium solution $y_e$ in (\ref{I-1.2})}. Thus this paper pertains to the general context of ``turbulence suppression or attenuation" in fluids. The general topic of turbulence suppression (or attenuation) in fluids has been the object of many studies over the years, mostly in the engineering literature – through experimental studies and via numerical simulation - and under different geometrical and dynamical settings. The references cited in the present paper by necessity refer mostly to the mathematical literature, and most specifically on the \uline{localized interior control setting of problem (\ref{I-1.1})}. A more precise description thereof is as follows: \textit{establish interior localized exponential stabilization of problem (\ref{I-1.1}) near an unstable equilibrium solution by means of a finite dimensional localized, spectral-based feedback control, in the important case of initial conditions $y_0$ of low regularity}, as technically expressed by $y_0$ being in suitable $L^q$/Besov space with tight indices. In particular, local exponential stability for the velocity field $y$ near an equilibrium solution $y_e$ will be achieved in the topology of the Besov space
	
	\begin{equation}\label{I-1.3}
	\big( L^q(\Omega), W^{2,q}(\Omega) \big)_{1-\rfrac{1}{p},p} = \Bso, \quad 1 < p < \frac{2q}{2q -1}; \ q > d,\ d = 2,3.
	\end{equation}
	\noindent See more precisely (\ref{I-1.16a}). We note the tight indices: e.g. $\ds 1 < p < \rfrac{6}{5} $ for $q > d = 3$. In such setting, the compatibility conditions on the boundary of the initial conditions are not recognized. This feature is precisely our key objective within the stabilization problem. The fundamental reason is that such feature will play a critical role in the successor paper \cite{LPT} in showing that: \uline{local tangential boundary feedback stabilization near an unstable equilibrium solution with finite dimensional controls is possible also in dimension $d=3$}, thus solving in the affirmative a recognized open problem in the stabilization area. This point will be more appropriately expanded in Section \ref{I-Sec-1.6} below. For $d = 3$, the space is (\ref{I-1.3}) in `close' to $L^3(\Omega)$.\\
	
	\noindent \textbf{Criticality of the space $L^3$:} We now expand on one message of the abstract regarding the `criticality' of the space $L^3(\Omega)$. In the case of the uncontrolled N-S equations defined on the full space $\BR^3$, extensive research efforts have recently lead to the discovery that the space $L^3(\BR^3)$ is a `critical' space for the issue of well-posedness. Assuming that some divergence free initial data in $L^3(\BR^3)$ would produce finite time singularity, then there exists a so-called minimal blow-up initial data in $L^3(\BR^3)$ \cite{GKP:2010}, \cite{JS:2013}. More precisely, let $y$ now be a solution of the N-S equations (\hyperref[I-1.1]{1.1a-b-d}) with $m \equiv 0, f \equiv 0$, as defined on the whole space $\BR^3$. For any divergence free I.C. $y_0 \in L^3(\BR^3)$, denote by $T_{max} (y_0)$ the maximal time of existence of the mild solution starting from $y_0$. Define	
	\begin{equation}
		\rho_{max} = \sup \big\{ \rho: \ T_{max}(y_0) = \infty \text{ for every divergence free } y_0 \in L^3(\BR^3), \text{ with } \norm{y_0}_{L^3(\BR^3)} < \rho \big\}.  \nonumber
	\end{equation}
	
	\noindent The following result holds \cite[Theorem 4.1, p 14]{JS:2013}: Suppose $\rho_{max} < \infty$. Then there exists some $\ds y_0 \in L^3(\BR^3)$, $\norm{y_0}_{L^3(\BR^3)} = \rho_{max}$, whose $T_{max}(y_0) < \infty$, i.e. the corresponding solution blows up in finite time. Of the numerous works that followed the pioneering work of \cite{Ler:1934} along this line of research, we quote in addition \cite{Ser:1962}, \cite{Ser:1963}, \cite{ESS:1991}, \cite{RS:2009}.\\
	
	\noindent Thus, our present work manages to solve the uniform stabilization problem for the controlled N-S equations as (\hyperref[I-1.1]{1.1a-b-d}) in a correspondingly related low-regularity function space setting. A further justification of our low-regularity level of the Besov space in (\ref{I-1.3}) is provided by  the final goal of our line of research in the subsequent Part II \cite{LPT}. Based critically on said low-regularity level of the Besov space (\ref{I-1.3}), which does not recognize compatibility conditions on the boundary of the initial conditions - such Part II solves in the affirmative a presently open problem by showing that uniform stabilization by localized tangential boundary feedback controls which moreover are finite dimensional is possible also in dim $\Omega = d = 3$.\\
	
	\noindent The successor \cite{LPT} to the present work will extensively review the literature as it pertains to the boundary stabilization case, particularly with tangential control action. Accordingly, below we shall review the present study mostly in comparison with the prior solution of the localized interior stabilization in the Hilbert-based treatment of the original work \cite{BT:2004} (via a Riccati-based finite dimensional control), followed by \cite{BLT3:2006} (via a spectral based finite dimensional control) and textbook versions thereof \cite[Chapter 4]{B:2011}.	
	
	\subsection{Qualitative Orientation}
	
	\subsubsection{On the local, interior, feedback stabilization problem: Past Literature}\label{I-Sec-1.4.1}
	 We start with an unstable steady state solution $y_e$, given an external force $f$, and a sufficiently large Reynold number $\frac{1}{\nu}$, as described in Section \ref{I-Sec-1.3}. We deliberately aim at rough (non-smooth) initial conditions $y_0$. We then seek a finite-dimensional interior localized feedback control $u$,  such that the corresponding N-S problem is well-posed in a suitable function space setting (depending on the I.C. $y_0$) and its solution $y$ in (\ref{I-1.1}) is locally exponentially stable near the equilibrium solution $y_e$, in a suitably corresponding norm. This problem was originally posed and solved in the Hilbert space setting in \cite[Theorem 2.2 p 1449]{BT:2004} by means of a finite dimensional Riccati-based feedback control $u$, where exponential decay is obtained in the $\calD(A^{\rfrac{1}{4}})$-topology. Here $A$ is the positive self-adjoint Stokes operator in (\ref{I-1.17}) with $q = 2$ in the space $H$ with $L^2(\Omega)$-topology. See (\ref{I-1.6}) below. A similar exponential decay result, in the same $\calD(A^{\rfrac{1}{4}})$-topology, is given in \cite[Thm 5.1, p 42]{BLT3:2006}, this time by means of a finite-dimensional, spectral-based feedback control $u$. These results are reproduced in textbook form in \cite[Chapter 4]{B:2011}.\\ 
	 
	 \noindent Regarding the solution given in these Hilbert-space based references, we point out (at present) two defining, linked characteristics of their finite dimensional treatment:
	 \begin{enumerate}[(i)]
	 \item The number of stabilizing (localized) controls for the (complex-valued) nonlinear dynamics (\ref{I-1.1}) is $N= \sup \{ N_i;\ i = 1,\dots, M\}$, that is, the max of the \textit{algebraic} multiplicity $N_i$ of the $M$ distinct unstable eigenvalues $\lambda_i$, see (\ref{I-2.2}), of the projected Oseen operator $\calA_N^u$ in (\ref{I-2.5}).
	 
	 \item in the fully general case, the algebraic (Kalman rank) conditions for controllability under which the finite dimensional feedback control is \textit{explicitly} constructed involve the Grahm-Schmidt orthogonalization  of the generalized eigenfunctions of the adjoint $(\calA_N^u)^*$, making the test difficult to verify. Only in the case where the restriction $\calA_N^u$ in (\ref{I-2.5}) of the Oseen operator $\calA$ in (\ref{I-1.10}) is semisimple (algebraic and geometric multiplicity of the unstable eigenvalues coincide), are the controllability tests given in terms of eigenfunctions of $(\calA_N^u)^*$.
	 
	 \end{enumerate} 
	
	\subsubsection{Additional goals of the present paper as definite improvements over the literature}
	
	We list these main additional goals of the present work aimed at markedly improving both the results and the approach of the original reference \cite{BT:2004}, followed by \cite{BLT3:2006} and their textbook version \cite[Chapter 4]{B:2011}. They are:
	
	\begin{enumerate}[(i)]
		\item With reference to part (i) in Section \ref{I-Sec-1.4.1}, our next goal is to obtain (in the general case of Theorem \ref{I-Thm-4.1}) that the number of finite dimensional stabilizing controls needed for the (complex valued version of the) dynamics (\ref{I-1.1}) is $K = \sup \{ \ell_i; \ i = 1, \dots, M \}$, where $\ell_i$ is the \textit{geometric} multiplicity of the $M$ distinct unstable eigenvalues $\lambda_i$ in (\ref{I-2.2}). This is a notable reduction in the number of needed controls over the max \underline{algebraic} multiplicity $N$ in (i) of Section \ref{I-Sec-1.4.1}.
		
		\item Intimately linked to goal (i) is the next goal to obtain the controllability Kalman rank condition of Section \ref{I-Sec-3} be expressed in terms of only the eigenfunctions - not the generalized eigenfunctions: as in the past literature \cite{BLT1:2006}, \cite{BLT2:2007}, \cite{BLT3:2006}, \cite[Chapter 4]{B:2011}  - of the adjoint $(\calA_N^u)^*$.
		
	
	
	\item An important additional goal is to simplify and make more transparent the well-posedness and local stabilization arguments for the non-linear problem, in particular through a direct analysis of the nonlinear operator $\calN_q$ in (\ref{I-1.11}), called $B$ in \cite{BT:2004}, not its approximation sequence $B_{\varepsilon}$ as in \cite[Section 4, p1480]{BT:2004} or \cite[proof of Therorem 3.4 p 107-110]{B:2011}. More precisely, unlike these references, the present paper carries out an analysis of the critical issues based on the \uline{maximal regularity property} of the linearized feedback operator $\ds \BA_{_F} (= \BA_{_{F,q}})$ in (\ref{I-6.1}) in the critical $L^q$/Besov setting. This point also is further expanded in Section \ref{I-Sec-1.6} below. 
	
	\item A final goal - in line with goal (iii) above - is to obtain corresponding results for the pressure $\pi$ in (\ref{I-1.1a}), as part of the same \uline{maximal regularity property} of the linearized feedback operator $\ds \BA_{_{F}} (= \BA_{_{F,q}})$ in (\ref{I-6.1}), unlike \cite[Theorem 2.3, p1450]{BT:2004}.
	\end{enumerate}	
	
	\subsection{What is one key motivation for seeking interior localized feedback exponential stabilization of problem (\ref{I-1.1}) in the topology of the Besov space in (\ref{I-1.3})?}
	
	As already discussed in Section \ref{I-Sec-1.3}, obtaining the resulting stabilization in a non-Hilbertian setting is of theoretical interest in itself, and is in line with recent developments in 3-d N-S equations defined on the entire space $\BR^3$, where recent breakthroughs have identified the space $L^3(\BR^3)$ as a critical space for well-posedness, possessing the minimal blow up initial data property \cite{GKP:2010}, \cite{JS:2013}. However, our main original motivation for the present study is another. The present paper intends to test $L^q$/Besov spaces techniques initially in the interior localized feedback stabilization problem (\ref{I-1.1}) in the critical low regularity setting of (\ref{I-1.3}). The true aim is, however, to export them with serious additional technical difficulties, to solve the presently recognized \textbf{open} problem of the local feedback exponential stabilization of the N-S equations with \textit{finite-dimensional} feedback \textit{tangential boundary} controllers in the case of dimension $d = 3$ \cite{LPT}. In fact, present state-of-the-art has succeeded \cite{LT1:2015}, \cite{LT2:2015} in establishing local exponential stabilization (asymptotic turbulence suppression) by means of \textit{finite-dimensional tangential} feedback boundary control in the Hilbert setting and with no assumptions whatsoever on the Oseen operator in two cases: 
	\begin{enumerate}[(i)]
		\item when the dimension $d = 2$,
		\item when the dimension $d = 3$ but the initial condition $y_0$ in (\ref{I-1.1}.d) is compactly supported. 
	\end{enumerate}
	In the general $d = 3$ case, the non-linearity of the N-S problem forces a Hilbert space setting with a high-topology $H^{\rfrac{1}{2} + \epsilon}(\Omega)$ for the initial conditions, whereby the compatibility conditions on the boundary kick in. These then cannot allow the stabilizing feedback control to be finite-dimensional in general. More precisely, even at the level of the \textit{linearized boundary} problem for $d = 3$, \textit{open loop} exponential stabilization \cite [Proposition 3.7.1 Remark 3.7.1]{BLT3:2006}, \cite[ Proposition 2.5, eq (2.48)]{LT1:2015} provide a \textit{boundary} control consisting of a finite-dimensional term plus the term $e^{-2 \gamma_1 t} ( (I.C.)|_{\Gamma})$, with $\gamma_1 > 0$ preassigned, which spoils the finite-dimensionality, unless the initial condition is compactly supported. These limitations are in subsequent literature. In contrast, the Besov space in (\ref{I-1.3}) above, which is ``close" to the space $L^3(\Omega)$ for $ d = 3$, has the key, fundamental advantage of not recognizing the boundary conditions. That is why this paper is interested in a stabilization result in such a low regularity space for $d=3$, at first in the case of interior localized control.
	
	\subsection{Comparison with prior work, once the present treatment is specialized to the Hilbert setting $(q=2)$}\label{I-Sec-1.6}
	
	\noindent A comparison with the original prior work \cite{BT:2004}, \cite{BLT3:2006}, reproduced in \cite[Chapter 4]{B:2011} which was carried out in the Hilbert setting, is in order.\\
	
	\noindent \textbf{Orientation} Even when specialized to the Hilbert space setting $(q=2)$, the present treatment offers distinct, notable advantages – both conceptual and computational over prior literature quoted above. These include not only definitely simpler and more direct arguments but also transparent simplifications in the actual construction of the finite dimensional stabilizing controllers as well as their number. Qualitative details are given below. The main conceptual approach and the final results of the present paper are (when specialized to the Hilbert setting) qualitatively in line with those in \cite{BT:2004}: local uniform stabilization of the non-linear $y$-problem (\ref{I-1.1}) near an unstable equilibrium solution $y_e$ by means of finite dimensional, arbitrarily localized controllers is based on the corresponding result on (global) uniform stabilization of the linearized $w$-system (\ref{I-1.13}). This in turn rests on the space decomposition technique introduced in \cite{RT:1975} for parabolic problems (and also for differentiable semigroups): its foundational starting point is the controllability of the finite dimensional unstable projected system $w_N$ in (\ref{I-2.8a}). However, in the implementation of these two fundamental phases, linear analysis - in particular, its finite dimensional $w_N$ component - and nonlinear analysis, the present paper provides a much more attractive, more powerful and mature treatment. We mention the most relevant new features. They are:
	
	\begin{enumerate}[1.]
		\item finite dimensional analysis leading, through a much more simplified and more direct approach, to a lower (optimal) number of feedback controls;
		\item infinite dimensional analysis on the nonlinear effects (the operator $\calN_q$ in (\ref{I-1.11})) handled by critical and clean use of \uline{maximal regularity of the linearized feedback operator} $\ds \BA_{_{F}} (\equiv \BA_{_{F,q}})$ in (\ref{I-6.1}), rather than by the approximation argument as in \cite{BT:2004}. This refers to both $y$ and $\pi$.
	\end{enumerate}
	\noindent These two points are explained below.	
	
	\begin{enumerate}[1.]
		\item \textbf{Stabilization of the linearized $w$-problem (\ref{I-1.13})}. The key foundational algebraic test for controllability  of the finite dimensional $w_N$-system (\ref{I-2.8a}) on the finite dimensional unstable subspace  $W^u_N$ is much simplified, sharper and leads, in principle, to checkable conditions and to an implementable procedure to obtain constructively the finite dimensional stabilizing vectors $u_k \in W^u_N,\ p_k \in (W^u_N)^*$. In fact, the present treatment shows that (for the complexified version of the N-S) the required number of feedback stabilizing finite dimensional controllers is $K = \sup \{\ell_i, \ i = 1, \dots, M \}$, the max of the geometric multiplicity $\ell_i$ of the $M$ distinct unstable eigenvalues $\lambda_i$ of the Oseen operator $\ds \calA$; not the larger $\sup \{ N_i, \ i = 1, \dots, M \}$, the max of the algebraic multiplicity $N_i$ of its distinct unstable eigenvalues, as in \cite{BT:2004}, \cite{BLT1:2006}, \cite{BLT3:2006}, \cite[Chapter 4]{B:2011}. Let alone $\ds N = \sum_{i = 1}^{M} N_i = $ dim $W^u_N$ (dimension of the generalized eigenspace of the unstable eigenvalues) as in the treatment of \cite[assumption K.2 p 123]{B:2011}, where, in addition, the simplifying assumption that algebraic and geometric multiplicities coincide for the unstable eigenvalues. Moreover, the entire analysis of the present paper rests only on the (true) eigenvectors corresponding to the unstable eigenvalues of the adjoint operator in (\ref{I-3.1}); not only under the Finite Dimensional Spectral Condition (semisimplicity) as in \cite{BT:2004} where $W^u_N$ has a basis of such (true) eigenvectors, but also in the most general case where the projected Oseen operator $\ds \calA_N^u $ is in Jordan form, and hence the basis on $W^u_N$ consists instead of all generalized eigenvectors corresponding to the unstable eigenvalues. As first noted in \cite{LT1:2015} in the study of \textit{tangential boundary stabilization} of the N-S equations, even in the general case possessing only a basis of generalized eigenfunctions arising from the Jordan form, the \underline{final test for controllability involves only the true eigenfunctions of the adjoint operator}: the algebraic test (\ref{I-4.13}), (\ref{I-4.14}) for controllability in the general case is exactly the same as the algebraic test (\ref{I-3.18}) in the semisimple (diagonalizable)-case; and only the true eigenfunctions count. This justifies why the number $K$ of (complex valued) stabilizing controllers as in Theorem \ref{I-Thm-2.1} is equal to the supremum of the geometric multiplicity of the unstable eigenvalues, not the supremum of their larger algebraic multiplicity as in past references \cite{BT:2004}, \cite{BLT1:2006}, \cite{BLT3:2006}, \cite[Chapter 4]{B:2011} as noted above. Moreover, in \cite{BT:2004} repeated in \cite[Chapter 4]{B:2011} the procedure for testing controllability in the general case was much more cumbersome and far less implementable: the original basis of generalized eigenfunctions of the adjoint operator in the general case was transformed into an orthonormal basis of $W^u_N$ via the Schmidt orthogonalization process, and the test for the finite dimensional controllability was then based on such transformed, and thus in principle difficult to check, orthogonalized system: a much more complicated test than the one using just the true eigenfunctions as in (\ref{I-4.13}).\\
		
		
		\item \textbf{Local Stabilization of the nonlinear translated $z$-equation (\ref{I-1.7}) near the origin, hence of the original $y$-equation (\ref{I-1.1}) near an equilibrium solution $y_e$}. Treatment of the nonlinearity in the present work is much more transparent and direct than the one performed in \cite{BT:2004}. Here the analysis is directly in terms of the nonlinear operator $\calN_q$ in (\ref{I-1.11}) and \uline{makes use of maximal regularity properties} of the linearized feedback operator $\ds \BA_{_F} (\equiv \BA_{F,q})$ in (\ref{I-6.1}) (maximal regularity is equivalent to analyticity of the semigroup in the Hilbert setting. Instead, in the Banach setting, maximal regularity implies, but is not necessarily implied by, analyticity of the semigroup). In contrast, in \cite{BT:2004} with $q = 2$, an approximation argument of the nonlinear operator $\calN_q$, denoted by $B$, was used, by introducing a sequence of approximating operators $B_{\varepsilon}$ thereof \cite[Section 4, p1480]{BT:2004}. A critical step in \cite{BT:2004} is that the nonlinearity $B$ (or its approximation) be controlled by the topology of the $A^{\rfrac{3}{4}}$-power; and this in turn is achieved by using an optimal control approach with $A^{\rfrac{3}{4}}$-penalization of the solution via Riccati equations. There is no need of this in the present treatment (the analysis of the optimization problem and Riccati equation in a non-Hilbert setting is not the right tool). We likewise note that our present treatment of the passage from the $w$-linearized problem (\ref{I-2.16}) to the fully non-linear $z$-system (\ref{I-2.20}) is also different from the one employed in \cite{BLT1:2006}, \cite{LT2:2015} which was also direct in terms of the nonlinear operator $\calN$. It was however not maximal regularity - based, as in the present paper.\\
		
		\item \textbf{Well-posedness of the pressure $\pi$ for the original $y$-problem in the feedback form as in (\ref{I-2.22}) in the vicinity of the equilibrium pressure $\pi_e$ in (\ref{I-1.2a}).} The well-posedness result Theorem \ref{I-Thm-10.2} on the pressure $\pi$ of the original $y$-problem on feedback form as given by (\ref{I-2.26}), (\ref{I-2.27}) is the $L^q$/Besov space counterpart of the Hilbert ($L^2$)-version given by \cite[Theorem 2.3 p1450]{BT:2004}. The present proof is much more direct as, again, is based on maximal regularity properties. In contrast, the proof in \cite[p 1484]{BT:2004} is based on the approximation of the original problem.
	\end{enumerate}
	
	\subsection{Helmholtz decomposition}
	A first difficulty one faces in extending the local exponential stabilization result for the interior localized problem (\ref{I-1.1}) from the Hilbert-space setting in \cite{BT:2004}, \cite{BLT1:2006} to the $L^q$ setting is the question of the existence of a Helmholtz (Leray) projection for the domain $\Omega$ in $\mathbb{R}^d$. More precisely: Given an open set $\Omega \subset \mathbb{R}^d$, the Helmholtz decomposition answers the question as to whether $L^q(\Omega)$ can be decomposed into a direct sum of the solenoidal vector space $\lso$ and the space $G^q(\Omega)$ of gradient fields. Here,	
	\begin{equation}\label{I-1.4}
	\begin{aligned}
	\lso &= \overline{\{y \in C_c^{\infty}(\Omega): \div \ y = 0 \text{ in } \Omega \}}^{\norm{\cdot}_q}\\
	&= \{g \in L^q(\Omega): \div \ g = 0; \  g\cdot \nu = 0 \text{ on } \partial \Omega \},\\
	& \hspace{3cm} \text{ for any locally Lipschitz domain } \Omega \subset \mathbb{R}^d, d \geq 2 \\
	G^q(\Omega) &= \{y \in L^q(\Omega):y = \nabla p, \ p \in W_{loc}^{1,q}(\Omega) \} \ \text{where } 1 \leq q < \infty.
	\end{aligned}
	\end{equation}
	
	\noindent Both of these are closed subspaces of $L^q$.
	
	\begin{definition}\label{I-Def-1.1}
		Let $1 < q < \infty$ and $\Omega \subset \mathbb{R}^n$ be an open set. We say that the Helmholtz decomposition for $L^q(\Omega)$ exists whenever $L^q(\Omega)$ can be decomposed into the direct sum (non-orthogonal)
		\begin{equation}
		L^q(\Omega) = \lso \oplus G^q(\Omega).\label{I-1.5}
		\end{equation}
		The unique linear, bounded and idempotent (i.e. $P_q^2 = P_q$) projection operator $P_q:L^q(\Omega) \longrightarrow \lso$ having $\lso$ as its range and $G^q(\Omega)$ as its null space is called the Helmholtz projection. Additional information is given in Appendix \hyperref[I-app-A]{A}.
	\end{definition}
	
	\noindent This is an important property in order to handle the incompressibility condition $div \ y \equiv 0$. For instance, if such decomposition exists, the Stokes equation (say the linear version of (\ref{I-1.1}) with control $u \equiv 0$) can be formulated as an equation in the $L^q$ setting. Here below we collect a subset of known results about Helmholtz decomposition. We refer to \cite[Section 2.2]{HS:2016}, in particular to the comprehensive Theorem 2.2.5 in this reference, which collects domains for which the Helmholtz decomposition is known to exist. These include the following cases:
	
	\begin{enumerate}[(i)]
		\item any open set $\Omega \subset \mathbb{R}^d$ for $q = 2$, i.e. with respect to the space $L^2(\Omega)$; more precisely, for $q = 2$, we obtain the well-known orthogonal decomposition (in the standard notation, where $\nu=$unit outward normal vector on $\Gamma$) \cite[Prop 1.9, p 8]{CF:1980}
		\begin{subequations}\label{I-1.6}
			\begin{align}
			L^2(\Omega) &= H \oplus H^{\perp}\\
			H &= \{ \phi \in L^2(\Omega): div \ \phi \equiv 0 \text{ in } \Omega; \ \phi \cdot \nu \equiv 0 \text{ on } \Gamma \}\\
			H^{\perp} &= \{ \psi \in L^2(\Omega): \psi = \nabla h, \ h \in H^1(\Omega) \};
			\end{align}
		\end{subequations}
		\item a bounded $C^1$-domain in $\mathbb{R}^d$ \cite{FMM:1998}, $1 < q < \infty $ \cite[Theorem 1.1 p 107, Theorem 1.2 p 114]{Ga:2011} for $C^2$-boundary;
		\item a bounded Lipschitz domain $\Omega \subset \mathbb{R}^d \ (d = 3)$ and for $\frac{3}{2} - \epsilon < q < 3 + \epsilon$ sharp range \cite{FMM:1998};
		\item a bounded convex domain $\Omega \subset \mathbb{R}^d, d \geq 2, 1 < q < \infty$ \cite{FMM:1998}.
	\end{enumerate}
	
	\noindent On the other hand, on the negative side, it is known that there exist domains $\Omega \subset \mathbb{R}^d$ such that the Helmholtz decomposition does not hold for some $q \neq 2$ \cite{MS:1986}.\\
	
	\noindent \textbf{Assumption (H-D)} Henceforth in this paper, we assume that the bounded domain $\Omega \subset \mathbb{R}^d$ under consideration admits a Helmholtz decomposition for the values of $q, \ 1 < q < \infty$, here considered at first, for the linearized problem (\ref{I-1.13}) below. The final result Theorem \ref{I-Thm-2.5} for the non-linear problem (\ref{I-1.1}) will require $q > d$, see (\ref{I-8.16}), in the case of interest $d = 2, 3$. 
	
	\subsection{Translated nonlinear Navier-Stokes $z$-problem: reduction to zero equilibrium}\label{I-Sec-1.8}
	We return to Theorem \ref{I-Thm-1.1} which provides an equilibrium pair $\{y_e, \pi_e\}$. Then, as in \cite{BT:2004}, \cite{BLT1:2006}, \cite{LT1:2015} we translate by $\{y_e, p_e\}$ the original N-S problem (\ref{I-1.1}). Thus we introduce new variables
	\begin{subequations}\label{I-1.7}
		\begin{align}\label{I-1.7a}
		z = y - y_e, \quad \chi = \pi - \pi_e
		\end{align}
		and obtain the translated problem
		\begin{align}
		z_t - \nu \Delta z + (y_e \cdot \nabla)z +(z \cdot \nabla)y_e + (z \cdot \nabla) z + \nabla \chi &= mu   &\text{ in } Q \\ 
		\div \ z &= 0   &\text{ in } Q \\
		z &= 0 &\text{ on } \Sigma\\
		z(0,x) &= y_0(x) - y_e(x) &\text{ on } \Omega \label{I-1.7e}
		\end{align}
	\end{subequations}
	
	\noindent We shall accordingly study the local null feedback stabilization of the $z$-problem (\ref{I-1.7}), that is, feedback stabilization in a neighborhood of the origin. As usual, we next apply the projection $P_q$ below (\ref{I-1.5}) to the translated N-S problem (\ref{I-1.7}) to eliminate the pressure $\chi$. We thus proceed to obtain the corresponding abstract setting for the problem (\ref{I-1.7}) as in \cite{BT:2004} except in the $L^q$-setting rather than in the $L^2$-setting as in this reference. Note that $P_q z_t = z_t$, since $z \in \lso$ in (\ref{I-1.4}).
	
	\subsection{Abstract nonlinear translated model}
	
	First, for $1 < q < \infty$ fixed, the Stokes operator $A_q$ in $\lso$ with Dirichlet boundary conditions  is defined by \cite[p 1404]{GGH:2012}, \cite[p 1]{HS:2016}
	\begin{equation}\label{I-1.8}
	A_q z = -P_q \Delta z, \quad
	\mathcal{D}(A_q) = W^{2,q}(\Omega) \cap W^{1,q}_0(\Omega) \cap \lso.
	\end{equation}
	The operator $A_q$ has a compact inverse $A_q^{-1}$ on $\lso$, hence $A_q$ has a compact resolvent on $\lso$. 
	
	\noindent Next, we introduce the first order operator $A_{o,q}$,
	\begin{equation}\label{I-1.9}
	A_{o,q} z = P_q[(y_e \ . \ \nabla )z + (z \ . \ \nabla )y_e], \quad \mathcal{D}(A_{o,q}) = \mathcal{D}(A_q^{\rfrac{1}{2}}) \subset \lso,
	\end{equation}
	where the $\ds \calD(A_q^{\rfrac{1}{2}})$ is defined explicitly in (\ref{I-1.22}) below. Thus, $A_{o,q}A_q^{-\rfrac{1}{2}}$ is a bounded operator on $\lso$, and thus $A_{o,q}$ is bounded on $\calD(A_q^{\rfrac{1}{2}})$ 
	\begin{equation*}
	\norm{A_{o,q}f} = \norm{A_{o,q} A_q^{-\rfrac{1}{2}} A_q^{-\rfrac{1}{2}} A_q f} \leq C_q \norm{A_q^{\rfrac{1}{2}} f}, \quad f \in \calD(A_q^{\rfrac{1}{2}}).
	\end{equation*}
	This leads to the definition of the Oseen operator
	\begin{equation}\label{I-1.10}
	\calA_q  = - (\nu A_q + A_{o,q}), \quad \calD(\calA_q) = \calD(A_q) \subset \lso.
	\end{equation}
	Finally, we define the projection of the nonlinear portion of the static operator in (\ref{I-1.7}b)
	\begin{equation}\label{I-1.11}
	\calN_q(z) = P_q [(z \cdot \nabla) z], \quad \calD(\calN_q) = W^{1,q}(\Omega) \cap L^{\infty} (\Omega) \cap \lso.	
	\end{equation}
	\noindent [As shown in (\ref{I-8.16}) in the analysis of the non-linear problem, at the end we shall use $\ds W^{1,q}(\Omega) \subset L^{\infty}(\Omega)$ for $q > $ dim $\Omega = 3$ \cite[Theorem 2.4.4, p74, requiring $C^1$ boundary.]{SK:1989}]\\
	
	\noindent Thus, the Navier-Stokes translated problem (\ref{I-1.7}), after application of the Helmholtz projector $P_q$ in Definition \ref{I-Def-1.1} and use of (\ref{I-1.8})-(\ref{I-1.11}), can be rewritten as the following abstract equation in $\lso$:
	
	\begin{subequations}\label{I-1.12}
		\begin{align}
		\frac{dz}{dt} + \nu A_q z + A_{o,q} z + P_q [(z \cdot \nabla )z] &= P_q(mu) \quad \text{ or } \quad \frac{dz}{dt} - \calA_q z + \calN_q z = P_q(mu) \text{ in } \lso \label{I-1.12a}\\
		\begin{picture}(0,0)
		\put(-135,12){ $\left\{\rule{0pt}{25pt}\right.$}\end{picture}
		z(x,0) &= z_0(x) = y_0(x) - y_e \text{ in } \lso. \label{I-1.12b}
		\end{align}
	\end{subequations}
	
	\subsection{The linearized problem of the translated model}
	Next, still for $1 < q < \infty$, we consider the following linearized system of the translated model (\ref{I-1.7}) or (\ref{I-1.12}):
	\begin{subequations}\label{I-1.13}
		\begin{align}
		\frac{dw}{dt} + \nu A_q w + A_{o,q} w &= P_q(mu) \quad \text{ or } \quad \frac{dw}{dt} - \calA_q w = P_q(mu) \text{ in } \lso \label{I-1.13a}\\
		\begin{picture}(0,0)
		\put(-80,12){ $\left\{\rule{0pt}{25pt}\right.$}\end{picture}
		w_0(x) &= y_0(x) - y_e \text{ in } \lso. \label{I-1.13b}
		\end{align}
	\end{subequations}
	
	\subsection{Some auxiliary results for problem (\ref{I-1.13}): analytic semigroup generation, maximal regularity, domains of fractional powers}
	
	In this subsection we collect some known results to be used in the sequel.
	
	\begin{enumerate}[(a)]
		\item \textbf{Definition of Besov spaces $B^s_{q,p}$ on domains of class $C^1$ as real interpolation of Sobolev spaces:} Let $m$ be a positive integer, $m \in \mathbb{N}, 0 < s < m, 1 \leq q < \infty,1 \leq p \leq \infty,$ then we define \cite[p 1398]{GGH:2012}
		\begin{subequations}\label{I-1.14}
			\begin{equation}
			B^{s}_{q,p}(\Omega) = (L^q(\Omega),W^{m,q}(\Omega))_{\frac{s}{m},p} \label{I-1.14a}
			\end{equation}
			This definition does not depend on $\ds m \in \mathbb{N}$ \cite[p xx]{W:1985}. This clearly gives
			\begin{equation}
			W^{m,q}(\Omega) \subset B_{q,p}^s(\Omega) \subset L^q(\Omega) \quad \text{ and } \quad \norm{y}_{L^q(\Omega)} \leq C \norm{y}_{B_{q,p}^s(\Omega)}. \label{I-1.14b}
			\end{equation}
		\end{subequations}
		
		We shall be particularly interested in the following special real interpolation space of the $L^q$ and $W^{2,q}$ spaces $\Big( m = 2, s = 2 - \frac{2}{p} \Big)$:
		\begin{equation}\label{I-1.15}
		B^{2-\frac{2}{p}}_{q,p}(\Omega) = \big(L^q(\Omega),W^{2,q}(\Omega) \big)_{1-\frac{1}{p},p}.
		\end{equation}
		Our interest in (\ref{I-1.15}) is due to the following characterization \cite[Thm 3.4]{HA:2000}, \cite[p 1399]{GGH:2012}: if $A_q$ denotes the Stokes operator introduced in (\ref{I-1.8}), then 
		\begin{subequations}\label{I-1.16}
			\begin{align}
			\Big( \lso,\mathcal{D}(A_q) \Big)_{1-\frac{1}{p},p} &= \Big\{ g \in \Bso : \text{ div } g = 0, \ g|_{\Gamma} = 0 \Big\} \quad \text{if } \frac{1}{q} < 2 - \frac{2}{p} < 2 \label{I-1.16a}\\
			\Big( \lso,\mathcal{D}(A_q) \Big)_{1-\frac{1}{p},p} &= \Big\{ g \in \Bso : \text{ div } g = 0, \ g\cdot \nu|_{\Gamma} = 0 \Big\} \equiv \Bt(\Omega) \label{I-1.16b}\\
			&\hspace{4cm} \text{ if } 0 < 2 - \frac{2}{p} < \frac{1}{q}; \text{ or } 1 < p < \frac{2q}{2q - 1}.\nonumber
			\end{align}	
		\end{subequations}
		\noindent Notice that, in (\ref{I-1.16b}), the condition $\ds g \cdot \nu |_{\Gamma} = 0$ is an intrinsic condition of the space $\ds \lso$ in (\ref{I-1.4}), not an extra boundary condition as $\ds g|_{\Gamma} = 0$ in (\ref{I-1.16a}).
		\begin{rmk}
			In the analysis of well-posedness and stabilization of the nonlinear N-S problem (\ref{I-1.1}), with control $u$ in feedback form - such as the non linear translated feedback problem (\ref{I-2.20}) = (\ref{I-8.1})- we shall need to impose the constrain $q > 3$, see Eq (\ref{I-8.16}), to obtain the embedding $\ds W^{1,q} \hookrightarrow L^{\infty}(\Omega)$ in our case of interest $d=3$, as already noted below (\ref{I-1.11}). What is then the allowable range of the parameter $p$ in such case $q>3$? The intended goal of the present paper is to obtain the sought-after stabilization result in a function space, such as a $\ds \Bso$-space, that does not recognize boundary conditions of the I.C. Thus, we need to avoid the case in (\ref{I-1.16a}), as this implies a Dirichlet homogeneous B.C. Instead, we need to fit into the case (\ref{I-1.16b}). We shall then impose the condition $\ds 2 - \frac{2}{p} < \frac{1}{q} < \frac{1}{3}$ and then obtain that $p$ must satisfy $\ds p < \frac{6}{5}$. Moreover, analyticity and maximal regularity of the Stokes problem will require $p>1$. Thus, in conclusion, the allowed range of the parameters $p,q$ under which we shall solve the well-posedness and stabilization problem of the nonlinear N-S feedback system (\ref{I-2.20}) = (\ref{I-8.1}) for $d=3$, in the space $\ds \Bto$ which - as intended - does not recognize boundary conditions is: $\ds q > 3, \ 1 < p < \frac{6}{5}$. See Theorems \ref{I-Thm-2.3} through \ref{I-Thm-2.5}. 
		\end{rmk}
		
		\item \textbf{The Stokes and Oseen operators generate a strongly continuous analytic semigroup on $\lso$, $1 < q < \infty$}.
		\begin{thm}\label{I-Thm-1.2}
			Let $d \geq 2, 1 < q < \infty$ and let $\Omega$ be a bounded domain in $\mathbb{R}^d$ of class $C^3$. Then
			\begin{enumerate}[(i)]
				\item the Stokes operator $-A_q = P_q \Delta$ in (\ref{I-1.8}), repeated here as 
				\begin{equation}\label{I-1.17}
				-A_q \psi  = P_q \Delta \psi , \quad
				\psi \in \mathcal{D}(A_q) = W^{2,q}(\Omega) \cap W^{1,q}_0(\Omega) \cap \lso
				\end{equation}
				generates a s.c analytic semigroup $e^{-A_qt}$ on $\lso$. See \cite{Gi:1981} and the review paper \cite[Theorem 2.8.5 p 17]{HS:2016}.			
				\item The Oseen operator $\calA_q$ in (\ref{I-1.10}) \label{I-Thm-1.2(iii)}
				\begin{equation}\label{I-1.18}
				\calA_q  = - (\nu A_q + A_{o,q}), \quad \calD(\calA_q) = \calD(A_q) \subset \lso
				\end{equation}
				generates a s.c analytic semigroup $e^{\calA_qt}$ on $\lso$. This follows as $A_{o,q}$ is relatively bounded with respect to $A^{\rfrac{1}{2}}_q$, defined in (\ref{I-1.22}), see below (\ref{I-1.9}): thus a standard theorem on perturbation of an analytic semigroup generator applies \cite[Corollary 2.4, p 81]{P:1983}.
				
				\item \begin{subequations}\label{I-1.19}
					\begin{align}
					0 \in \rho (A_q) &= \text{ the resolvent set of the Stokes operator } A_q\\
					\begin{picture}(0,0)
					\put(-40,10){ $\left\{\rule{0pt}{18pt}\right.$}\end{picture}
					A_q^{-1} &: \lso \longrightarrow \lso \text{ is compact}
					\end{align}
				\end{subequations}			
			\item The s.c. analytic Stokes semigroup $e^{-A_qt}$ is uniformly stable on $\lso$: there exist constants $M \geq 1, \delta > 0$ (possibly depending on $q$) such that 
			\begin{equation}\label{I-1.20}
			\norm{e^{-A_qt}}_{\calL(\lso)} \leq M e^{-\delta t}, \ t > 0.
			\end{equation}
			
		\end{enumerate}
		\end{thm}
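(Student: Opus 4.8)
The plan is to treat the four assertions in sequence, concentrating the genuine analytic input in part (i) and obtaining the remaining parts by perturbation, compactness, and spectral arguments. For (i), I would verify that $-A_q$ is sectorial, i.e. that there exist an angle $\theta \in (0, \rfrac{\pi}{2})$ and a constant $C$ such that the sector $\Sigma = \{\lambda \neq 0 : |\arg \lambda| < \rfrac{\pi}{2} + \theta\}$ lies in the resolvent set of $-A_q$ and the bound $\norm{(\lambda I + A_q)^{-1}}_{\calL(\lso)} \le \rfrac{C}{|\lambda|}$ holds on $\Sigma$. The heart of this is the a priori estimate for the Dirichlet--Stokes resolvent problem $\lambda \psi - P_q \Delta \psi = f$ in $\lso$, which for a bounded $C^3$-domain is precisely the content of the Giga resolvent theory quoted in \cite[Theorem 2.8.5]{HS:2016}. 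Generation of the analytic semigroup $e^{-A_q t}$ then follows from the standard sectorial-resolvent characterization of analytic semigroup generators.

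For (ii), I would treat $\calA_q = -\nu A_q - A_{o,q}$ as a lower-order perturbation of $-\nu A_q$. The computation below (\ref{I-1.9}) already shows $A_{o,q} A_q^{-\rfrac{1}{2}} \in \calL(\lso)$, that is, $A_{o,q}$ is $A_q^{\rfrac{1}{2}}$-bounded. Invoking the moment (interpolation) inequality for fractional powers of the sectorial operator $A_q$, namely $\norm{A_q^{\rfrac{1}{2}} f} \le \epsilon \norm{A_q f} + C_{\epsilon} \norm{f}$ for every $\epsilon > 0$, one sees that $A_{o,q}$ is in fact $A_q$-bounded with relative bound $0$; the perturbation theorem for generators of analytic semigroups \cite[Corollary 2.4, p. 81]{P:1983} then yields that $\calA_q$ generates an analytic semigroup on $\lso$. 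For (iii), invertibility $0 \in \rho(A_q)$ follows from the unique solvability of the stationary Dirichlet--Stokes problem $-P_q \Delta \psi = f$ in $\calD(A_q)$ together with the elliptic estimate $\norm{\psi}_{W^{2,q}(\Omega)} \le C \norm{f}_{\lso}$, so that injectivity, surjectivity and the closed graph theorem give $A_q^{-1} \in \calL(\lso)$. Compactness of $A_q^{-1}$ is then immediate, as it factors through the continuous inclusion $\calD(A_q) \hookrightarrow W^{2,q}(\Omega)$ followed by the compact Rellich--Kondrachov embedding $W^{2,q}(\Omega) \hookrightarrow L^q(\Omega)$ on the bounded smooth domain $\Omega$.

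For (iv), I would combine analyticity with a spectral-gap argument. By (iii) the spectrum of $A_q$ is a discrete set of eigenvalues; these are independent of $q$ and coincide with those of the positive self-adjoint Stokes operator in the Hilbert case $q=2$, so that $\sigma(A_q) \subset \{\Re \lambda \ge \delta_0\}$ for some $\delta_0 > 0$. Since analytic semigroups satisfy the spectrum-determined growth condition, the growth bound of $e^{-A_q t}$ equals $-\inf\{\Re\lambda : \lambda \in \sigma(A_q)\} \le -\delta_0 < 0$, whence $\norm{e^{-A_q t}}_{\calL(\lso)} \le M e^{-\delta t}$ for any fixed $0 < \delta < \delta_0$.

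The main obstacle is the sectorial resolvent estimate underlying (i), namely the $L^q$-theory of the Stokes resolvent problem, whose proof is substantial and which I would invoke from the literature rather than reproduce. A secondary subtlety is the $q$-independence of the spectrum used in (iv); should one prefer to avoid citing it, the spectral gap can instead be read off directly from sectoriality together with the invertibility in (iii), which already confine $\sigma(A_q)$ to a sector strictly inside the right half-plane and bounded away from the origin.
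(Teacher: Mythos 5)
Your proposal is correct and takes essentially the same route as the paper, which presents Theorem \ref{I-Thm-1.2} as a compilation of quoted results: you delegate the sectorial $L^q$-resolvent estimate for $A_q$ in part (i) to Giga/Hieber--Saal exactly as the paper does, and your part (ii) argument (relative $A_q^{\rfrac{1}{2}}$-boundedness of $A_{o,q}$, upgraded to relative $A_q$-bound zero by the moment inequality, then Pazy's perturbation corollary) is precisely the paper's inline justification. The only place you go beyond the paper is parts (iii)--(iv), where the paper offers citations and no argument; your solvability/Rellich reasoning for (iii) is standard and correct, and for (iv) your fallback argument is the preferable one, since sectoriality confines $\sigma(A_q)$ to a closed sector $\{ |\arg \lambda| \leq \pi/2 - \theta \}$, invertibility plus openness of the resolvent set remove a ball about the origin, hence $\sigma(A_q) \subset \{\Re \lambda \geq \delta_0\}$ for some $\delta_0 > 0$, and the spectrum-determined growth property of analytic semigroups then yields (\ref{I-1.20}) without invoking the (true, but citation-requiring) $q$-independence of the Stokes spectrum.
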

		\item \textbf{Domains of fractional powers, $\calD(A_q^{\alpha}), 0 < \alpha < 1$ of the Stokes operator $A_q$ on $\lso, 1 < q < \infty$}, 
		\begin{thm}\label{I-Thm-1.3}
			For the domains of fractional powers $\calD(A_q^{\alpha}), 0 < \alpha < 1$, of the Stokes operator $A_q$ in (\ref{I-1.8}) = (\ref{I-1.17}), the following complex interpolation relation holds true \cite{Gi:1985} and \cite[Theorem 2.8.5, p 18]{HS:2016}
			\begin{equation}\label{I-1.21}
			[ \calD(A_q), \lso ]_{1-\alpha} = \calD(A_q^{\alpha}), \ 0 < \alpha < 1, \  1 < q < \infty;
			\end{equation}
			in particular
			\begin{equation}\label{I-1.22}
			[ \calD(A_q), \lso ]_{\frac{1}{2}} = \calD(A_q^{\rfrac{1}{2}}) \equiv W_0^{1,q}(\Omega) \cap \lso.
			\end{equation}
			Thus, on the space $\calD(A_q^{\rfrac{1}{2}})$, the norms
			\begin{equation}\label{I-1.23}
			\norm{\nabla \ \cdot \ }_{L^q(\Omega)} \text{ and } \norm{ \ }_{L^q(\Omega)}
			\end{equation}
			are equivalent via Poincar\'{e} inequality.
		\end{thm}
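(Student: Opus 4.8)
The plan is to recognize (\ref{I-1.21}) as a special case of the general principle that, for a boundedly invertible sectorial operator possessing \emph{bounded imaginary powers} (BIP) of angle strictly below $\rfrac{\pi}{2}$, the complex interpolation spaces between the base space and the operator domain coincide, up to equivalence of norms, with the domains of the fractional powers. Thus the argument splits into an abstract part (the passage from BIP to interpolation) and a concrete part (the structural identifications specific to the Stokes operator). First I would record, directly from Theorem \ref{I-Thm-1.2}(i),(iii),(iv), that $-A_q$ is sectorial on $\lso$ of some angle $\om_q < \rfrac{\pi}{2}$ with $0 \in \rho(A_q)$; this guarantees that the fractional powers $A_q^{\al}$ $(0<\al<1)$ and the imaginary powers $A_q^{is}$ $(s \in \BR)$ are well defined, and that the graph norm of $\calD(A_q)$ is equivalent to $\norm{A_q \cdot}_{\lso}$.

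The crucial input I would then invoke is the estimate $\norm{A_q^{is}}_{\calL(\lso)} \leq C_q\, e^{\theta_q|s|}$ for some $\theta_q < \rfrac{\pi}{2}$ and all $s \in \BR$, valid for every $1<q<\infty$; this is exactly Giga's theorem \cite{Gi:1985}. With the BIP property in hand, the abstract interpolation theorem, in the form recorded in \cite[Theorem 2.8.5]{HS:2016}, delivers at once
\begin{equation*}
[\calD(A_q), \lso]_{1-\al} = \calD(A_q^{\al}), \qquad 0 < \al < 1,
\end{equation*}
with equivalent norms, which is precisely (\ref{I-1.21}).

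For the special case (\ref{I-1.22}), I would set $\al = \rfrac{1}{2}$ in the identity just obtained, and then bring in the separate, concrete fact, again due to \cite{Gi:1985}, \cite[Theorem 2.8.5]{HS:2016}, that the square root of the Stokes operator has the explicit domain $\calD(A_q^{\rfrac{1}{2}}) = \woiq(\Omega)\cap\lso$. The final norm assertion (\ref{I-1.23}) is then elementary: on $\woiq(\Omega)$, where the trace on $\Gamma$ vanishes, the Poincar\'{e} inequality gives $\norm{\cdot}_{L^q(\Omega)} \leq C\,\norm{\nabla \cdot}_{L^q(\Omega)}$, so that the seminorm $\norm{\nabla \cdot}_{L^q(\Omega)}$ is equivalent to the full $W^{1,q}$-norm on this subspace.

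The hard part is not the abstract interpolation machinery but the two Stokes-specific inputs that feed it: the boundedness of the imaginary powers $A_q^{is}$ with power angle strictly below $\rfrac{\pi}{2}$, and the identification $\calD(A_q^{\rfrac{1}{2}}) = \woiq(\Omega)\cap\lso$. Both are genuinely nontrivial results of Giga that rest on the Helmholtz structure of $\lso$ and on the $C^3$-boundary regularity of $\Omega$, and it is for this reason that the theorem is quoted rather than derived here from first principles.
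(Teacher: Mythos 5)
Your proposal is correct and matches the paper's treatment: the paper states Theorem \ref{I-Thm-1.3} as a quoted result from \cite{Gi:1985} and \cite[Theorem 2.8.5, p 18]{HS:2016} without giving any proof, and your reconstruction — Giga's bounded-imaginary-powers estimate feeding the abstract principle that BIP yields $[\calD(A_q),\lso]_{1-\alpha}=\calD(A_q^{\alpha})$, followed by the concrete identification $\calD(A_q^{\rfrac{1}{2}})=W^{1,q}_0(\Omega)\cap\lso$ and the Poincar\'{e} inequality for the norm equivalence — is precisely the standard argument underlying those citations. You also correctly isolate the two genuinely nontrivial Stokes-specific inputs (the BIP estimate with angle below $\rfrac{\pi}{2}$ and the square-root domain identification) as the ones that must be quoted rather than rederived.
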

		
		\item \textbf{The Stokes operator $-A_q$ and the Oseen operator $\calA_q, 1 < q < \infty$ generate s.c. analytic semigroups on the Besov space}\label{I-Sec-1.10d}
		\begin{subequations}\label{I-1.24}
			\begin{align}
			\Big( \lso,\mathcal{D}(A_q) \Big)_{1-\frac{1}{p},p} &= \Big\{ g \in \Bso : \text{ div } g = 0, \ g|_{\Gamma} = 0 \Big\} \quad \text{if } \frac{1}{q} < 2 - \frac{2}{p} < 2; \label{I-1.24a}\\
			\Big( \lso,\mathcal{D}(A_q) \Big)_{1-\frac{1}{p},p} &= \Big\{ g \in \Bso : \text{ div } g = 0, \ g\cdot \nu|_{\Gamma} = 0 \Big\} \equiv \Bt(\Omega) \label{I-1.24b}\\
			&\hspace{7cm} \text{ if } 0 < 2 - \frac{2}{p} < \frac{1}{q}.\nonumber
			\end{align}	
		\end{subequations}
		Theorem \ref{I-Thm-1.2} states that the Stokes operator $-A_q$ generates a s.c analytic semigroup on the space $\lso, \ 1 < q < \infty$, hence on the space $\calD(A_q)$ in (\ref{I-1.17}), with norm $\ds \norm{ \ \cdot \ }_{\calD(A_q)} = \norm{ A_q \ \cdot \ }_{\lso}$ as $0 \in \rho(A_q)$.  Then, one obtains that the Stokes operator $-A_q$ generates a s.c. analytic semigroup on the real interpolation spaces in (\ref{I-1.24}). Next, the Oseen operator $\calA = -(\nu A_q + A_{o,q})$ likewise generates a s.c. analytic semigroup $\ds e^{\calA_q t}$ on $\ds \lso$ since $A_{o,q}$ is relatively bounded w.r.t. $A_q^{\rfrac{1}{2}},$ as $A_{o,q}A_q^{-\rfrac{1}{2}}$ is bounded on $\lso$. Moreover $\calA_q$ generates a s.c. analytic semigroup on $\ds \calD(\calA_q) = \calD(A_q)$ (equivalent norms). Hence $\calA_q$ generates a s.c. analytic semigroup on the real interpolation space of (\ref{I-1.24}). Here below, however, we shall formally state the result only in the case $\ds 2-\rfrac{2}{p} < \rfrac{1}{q}$. i.e. $\ds  1 < p < \rfrac{2q}{2q-1}$, in the space $\ds \Bto$, as this does not contain B.C. The objective of the present paper is precisely to obtain stabilization results on spaces that do not recognize B.C.
		
		\begin{thm}\label{I-Thm-1.4}
			Let $1 < q < \infty, 1 < p < \rfrac{2q}{2q-1}$.
			\begin{enumerate}[(i)]
				\item The Stokes operator $-A_q$ in (\ref{I-1.17}) generates a s.c. analytic semigroup $e^{-A_qt}$ on the space $\Bt(\Omega)$ defined in (\ref{I-1.16}) = (\ref{I-1.24}) which moreover is uniformly stable, as in (\ref{I-1.20}),
				\begin{equation}\label{I-1.25}
				\norm{e^{-A_qt}}_{\calL \big(\Bt(\Omega)\big)} \leq M e^{-\delta t}, \quad t > 0.
				\end{equation}
				\item The Oseen operator $\calA_q$ in (\ref{I-1.18}) generates a s.c. analytic semigroup $e^{\calA_qt}$ on the space $\Bt(\Omega)$ in (\ref{I-1.16}) = (\ref{I-1.24}).
			\end{enumerate}
		\end{thm}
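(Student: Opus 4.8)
The plan is to deduce Theorem \ref{I-Thm-1.4} from the analytic‑semigroup generation already recorded on the endpoint spaces in Theorem \ref{I-Thm-1.2}, combined with the identification (\ref{I-1.24b}) of $\Bt(\Omega)$ as the real interpolation space $\big(\lso,\calD(A_q)\big)_{1-\rfrac{1}{p},p}$. The guiding principle is the standard interpolation property of analytic semigroups: if a closed operator generates a strongly continuous analytic semigroup on each of two compatible Banach spaces $X_0$ and $X_1$ with $X_1 \hookrightarrow X_0$ densely, then its part in the real interpolation space $(X_0,X_1)_{\theta,p}$, $0<\theta<1$, $1\le p<\infty$, again generates a strongly continuous analytic semigroup; moreover any common exponential bound valid on both $X_0$ and $X_1$ is inherited by the interpolation space with the \emph{same} exponential rate.

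First I would set $X_0 = \lso$ and $X_1 = \calD(A_q)$, the latter with the norm $\norm{\cdot}_{\calD(A_q)} = \norm{A_q\,\cdot\,}_{\lso}$, which is legitimate since $0 \in \rho(A_q)$ by (\ref{I-1.19}). On $X_0$ the Stokes operator $-A_q$ generates a strongly continuous analytic semigroup $e^{-A_q t}$ by Theorem \ref{I-Thm-1.2}(i). On $X_1$ the same conclusion follows from the intertwining identity $A_q\,e^{-A_q t} = e^{-A_q t} A_q$: since $A_q : \big(\calD(A_q),\norm{A_q\cdot}_{\lso}\big) \to \lso$ is an isometric isomorphism, it conjugates the semigroup on $X_1$ into that on $X_0$, so analyticity and strong continuity transfer verbatim. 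Because $1 < p < \rfrac{2q}{2q-1} < \infty$, the smaller endpoint $\calD(A_q)$ is dense in $\big(X_0,X_1\big)_{1-\rfrac{1}{p},p} = \Bt(\Omega)$, which guarantees strong continuity at $t=0$; invoking the interpolation principle then yields the generation statement of part (i) on $\Bt(\Omega)$.

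For the uniform stability (\ref{I-1.25}), I would note that (\ref{I-1.20}) holds on $X_0 = \lso$ with constants $M,\delta$, and that the same isometric conjugation propagates it to $X_1$, since $\norm{e^{-A_q t} x}_{\calD(A_q)} = \norm{e^{-A_q t} A_q x}_{\lso} \le M e^{-\delta t}\norm{A_q x}_{\lso} = M e^{-\delta t}\norm{x}_{\calD(A_q)}$. A common exponential bound at rate $\delta$ on both endpoints is preserved by interpolation, delivering (\ref{I-1.25}) on $\Bt(\Omega)$ (with at most a larger multiplicative constant from the interpolation functor). For the Oseen operator in part (ii) I would run the identical scheme with $\calA_q$ in place of $-A_q$: Theorem \ref{I-Thm-1.2}(ii) gives analytic generation on $X_0 = \lso$, and since $A_{o,q}$ is relatively bounded with respect to $A_q^{\rfrac{1}{2}}$ one has $\calD(\calA_q) = \calD(A_q)$ with equivalent norms, so the smaller endpoint and hence $\big(\lso,\calD(\calA_q)\big)_{1-\rfrac{1}{p},p} = \Bt(\Omega)$ are unchanged. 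I deliberately claim no stability in (ii): the Oseen operator may carry precisely the unstable eigenvalues that the stabilization program is designed to remove, so no analogue of (\ref{I-1.25}) is expected.

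The main obstacle is not a single hard estimate but the careful bookkeeping required to invoke the abstract interpolation‑of‑semigroups theorem cleanly: one must confirm that \emph{both} endpoint generators are identified as the same (Stokes, resp. Oseen) operator acting compatibly, that the part of the operator in $\Bt(\Omega)$ is the correct one, and that the density requirement ($p<\infty$, for strong continuity) and the common decay rate (for stability) are genuinely met. Once these compatibility and endpoint facts are in place, the theorem follows as a direct corollary of the material assembled in Theorems \ref{I-Thm-1.2} and \ref{I-Thm-1.3} together with the interpolation identity (\ref{I-1.24b}).
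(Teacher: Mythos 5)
Your proposal is correct and follows essentially the same route as the paper: the paper's own argument (given in the text of Section \ref{I-Sec-1.10d} preceding the theorem) likewise passes from generation on the endpoint spaces $\lso$ and $\calD(A_q)$ (graph norm, using $0 \in \rho(A_q)$, and $\calD(\calA_q)=\calD(A_q)$ with equivalent norms for the Oseen case) to generation on the real interpolation space $\big(\lso,\calD(A_q)\big)_{1-\rfrac{1}{p},p} = \Bt(\Omega)$, with the uniform decay (\ref{I-1.25}) inherited by interpolating the bound (\ref{I-1.20}) on both endpoints. Your additional bookkeeping (the intertwining identity to transfer generation and decay to $\calD(A_q)$, and density of $\calD(A_q)$ for strong continuity) merely makes explicit what the paper leaves implicit.
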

		\item \textbf{Space of maximal $L^p$ regularity on $\lso$ of the Stokes operator $-A_q, \ 1 < p < \infty, \ 1 < q < \infty $ up to $T = \infty$.}
		We return to the dynamic Stokes problem in $\{\varphi(t,x), \pi(t,x) \}$
		\begin{subequations}\label{I-1.26}
			\begin{align}
			\varphi_t - \Delta \varphi + \nabla \pi &= F &\text{ in } (0, T] \times \Omega \equiv Q\\		
			div \ \varphi &\equiv 0 &\text{ in } Q\\
			\begin{picture}(0,0)
			\put(-70,5){ $\left\{\rule{0pt}{35pt}\right.$}\end{picture}
			\left. \varphi \right \rvert_{\Sigma} &\equiv 0 &\text{ in } (0, T] \times \Gamma \equiv \Sigma\\
			\left. \varphi \right \rvert_{t = 0} &= \varphi_0 &\text{ in } \Omega,
			\end{align}
		\end{subequations}
		
		rewritten in abstract form, after applying the Helmholtz projection $P_q$ to (\ref{I-1.26}a) and recalling $A_q$ in (\ref{I-1.17}) as 
		\begin{equation}\label{I-1.27}
		\varphi' + A_q \varphi = \Fs \equiv P_q F, \quad \varphi_0 \in \lqaq
		\end{equation}
		
		Next, we introduce the space of maximal regularity for $\{\varphi, \varphi'\}$ as \cite[p 2; Theorem 2.8.5.iii, p 17]{HS:2016}, \cite[p 1404-5]{GGH:2012}, with $T$ up to $\infty$: 
		\begin{equation}\label{I-1.28}
		X^T_{p,q, \sigma} = L^p(0,T;\calD(A_q)) \cap W^{1,p}(0,T;\lso)
		\end{equation}
		(recall (\ref{I-1.8}) for $\calD(A_q)$) and the corresponding space for the pressure as 
		\begin{equation}\label{I-1.29}
		Y^T_{p,q} = L^p(0,T;\widehat{W}^{1,q}(\Omega)), \quad \widehat{W}^{1,q}(\Omega) = W^{1,q}(\Omega) / \mathbb{R}. 
		\end{equation}
		The following embedding, also called trace theorem, holds true \cite[Theorem 4.10.2, p 180, BUC for $T=\infty$]{HA:2000}, \cite{PS:2016}.
		\begin{equation}\label{I-1.30}
		\xtpqs \subset \xtpq \equiv L^p(0,T; W^{2,q}(\Omega)) \cap W^{1,p}(0,T; L^q(\Omega)) \hookrightarrow C \Big([0,T]; \Bso \Big).
		\end{equation}
		For a function $g$ such that $div \ g \equiv 0, \ g|_{\Gamma} = 0$ we have $g \in \xtpq \iff g \in \xtpqs$, by (\ref{I-1.4}).\\
		The solution of Eq(\ref{I-1.27}) is 
		\begin{equation}\label{I-1.31}
		\varphi(t) = e^{-A_qt} \varphi_0 + \int_{0}^{t} e^{-A_q(t-s)} \Fs(\tau) d \tau.
		\end{equation}
		The following is the celebrated result on maximal regularity on $\lso$ of the Stokes problem due originally to Solonnikov \cite{VAS:1977} reported in \cite[Theorem 2.8.5.(iii) and Theorem 2.10.1 p24 for $\varphi_0 = 0$]{HS:2016}, \cite{S:2006}, \cite[Proposition 4.1 , p 1405]{GGH:2012}. 
		\begin{thm}\label{I-Thm-1.5} 
			Let $1 < p,q < \infty, T \leq \infty$. With reference to problem (\ref{I-1.26}) = (\ref{I-1.27}), assume
			\begin{equation}\label{I-1.32}
			\Fs \in L^p(0,T;\lso), \ \varphi_0 \in \Big( \lso, \calD(A_q)\Big)_{1-\frac{1}{p},p}.
			\end{equation}
			Then there exists a unique solution $\varphi \in \xtpqs, \pi \in \ytpq$ to the dynamic Stokes problem (\ref{I-1.26}) or (\ref{I-1.27}), continuously on the data: there exist constants $C_0, C_1$ independent of $T, \Fs, \varphi_0$ such that via (\ref{I-1.30})
			\begin{equation}\label{I-1.33}
			\begin{aligned}
			C_0 \norm{\varphi}_{C \big([0,T]; \Bso \big)} &\leq \norm{\varphi}_{\xtpqs} +  \norm{\pi}_{\ytpq}\\ &\equiv \norm{\varphi'}_{L^p(0,T;\lso)} + \norm{A_q \varphi}_{L^p(0,T;\lso)} +  \norm{\pi}_{\ytpq}\\
			&\leq C_1 \bigg \{ \norm{\Fs}_{L^p(0,T;\lso)}  + \norm{\varphi_0}_{\big( \lso, \calD(A_q)\big)_{1-\frac{1}{p},p}} \bigg \}.
			\end{aligned}
			\end{equation}
			In particular,
			\begin{enumerate}[(i)]
				\item With reference to the variation of parameters formula (\ref{I-1.31}) of problem (\ref{I-1.27}) arising from the Stokes problem (\ref{I-1.26}), we have recalling (\ref{I-1.28}): the map
				\begin{align}
				\Fs &\longrightarrow \int_{0}^{t} e^{-A_q(t-\tau)}\Fs(\tau) d\tau \ : \text{continuous} \label{I-1.34}\\
				L^p(0,T;\lso) &\longrightarrow \xtpqs \equiv L^p(0,T; \calD(A_q)) \cap W^{1,p}(0,T; \lso) \label{I-1.35}				
				\end{align}			
				\item The s.c. analytic semigroup $e^{-A_q t}$ generated by the Stokes operator $-A_q$ (see (\ref{I-1.17})) on the space $\ds \Big( \lso, \calD(A_q)\Big)_{1-\frac{1}{p},p}$ (see statement below (\ref{I-1.24})) satisfies
				\begin{subequations}\label{I-1.36}
					\begin{equation}
					e^{-A_q t}: \ \text{continuous} \quad \Big( \lso, \calD(A_q)\Big)_{1-\frac{1}{p},p} \longrightarrow \xtpqs \equiv L^p(0,T; \calD(A_q)) \cap W^{1,p}(0,T; \lso) \label{I-1.36a}
					\end{equation}
					In particular via (\ref{I-1.24b}), for future use, for $1 < q < \infty, 1 < p < \frac{2q}{2q - 1}$, the s.c. analytic semigroup $\ds e^{-A_q t}$ on the space $\ds \Bto$, satisfies
					\begin{equation}
					e^{-A_q t}: \ \text{continuous} \quad \Bto \longrightarrow \xtpqs. \label{I-1.36b}
					\end{equation}
				\end{subequations}				 
				\item Moreover,
					 for future use, for $1 < q < \infty, 1 < p < \frac{2q}{2q - 1}$, then (\ref{I-1.33}) specializes to
					\begin{equation}\label{I-1.37}
					\norm{\varphi}_{\xtpqs} + \norm{\pi}_{\ytpq} \leq C \bigg \{ \norm{\Fs}_{L^p(0,T;\lso)} + \norm{\varphi_0}_{\Bto} \bigg \}.
					\end{equation}
			\end{enumerate}		
		\end{thm}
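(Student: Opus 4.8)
The plan is to build Theorem~\ref{I-Thm-1.5} on the celebrated maximal $L^p$-regularity of the Stokes operator due to Solonnikov, which I take wholesale from the cited references \cite{VAS:1977}, \cite{HS:2016}, \cite{GGH:2012}, \cite{S:2006}, and then to read off the refined statements (i)--(iii) from the variation of parameters formula (\ref{I-1.31}) by linear superposition. First I would split the solution of the abstract problem (\ref{I-1.27}) into the free evolution $e^{-A_qt}\varphi_0$ and the Duhamel term $\int_0^t e^{-A_q(t-\tau)}\Fs(\tau)\,d\tau$ carrying zero initial datum, estimate each piece separately in the maximal regularity space $\xtpqs$, and recombine.

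The Duhamel term is precisely the content of statement~(i): its $\xtpqs$-boundedness, equivalently the mapping property (\ref{I-1.34})--(\ref{I-1.35}), is the $\varphi_0 = 0$ form of maximal regularity, which I would invoke directly from \cite[Theorem 2.10.1]{HS:2016}. At the abstract level this rests on the fact that $-A_q$ admits a bounded $H^\infty$-calculus on the UMD space $\lso$ (equivalently is $\calR$-sectorial), so that the Dore--Venni/Weis machinery yields boundedness of the singular convolution; I would cite this rather than reprove it. For the free evolution, statement~(ii), I would use the abstract characterization of the trace space of the maximal regularity class: for an operator with maximal regularity, the set of initial data $\varphi_0$ for which $t \mapsto e^{-A_qt}\varphi_0$ lands in $\xtpqs$ is exactly the real interpolation space $\lqaq$, and the initial-value map is bounded into $\xtpqs$. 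This is the trace theorem (\ref{I-1.30}), taken from \cite[Theorem 4.10.2]{HA:2000}, \cite{PS:2016}.

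Combining the two estimates through (\ref{I-1.31}) gives existence together with the a priori bound (\ref{I-1.33}); uniqueness follows because the difference of two solutions is a free evolution with zero datum, which the semigroup representation forces to vanish. The pressure is recovered by de Rham's theorem, or concretely by applying the complementary projection $I - P_q$ to the momentum equation: since $(I-P_q)\varphi_t = 0$ and $(I-P_q)\Fs = 0$, one gets $\nabla\pi = (I-P_q)\Delta\varphi$, whose $\ytpq$-norm is controlled by $\norm{A_q\varphi}_{L^p(0,T;\lso)}$, already bounded by the right side of (\ref{I-1.33}). The continuity into $\Ct{\Bso}$ on the left of (\ref{I-1.33}) is exactly the embedding (\ref{I-1.30}). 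Finally, the Besov specializations (\ref{I-1.36b}) and (\ref{I-1.37}) follow by identifying $\lqaq$ with $\Bto$ via (\ref{I-1.24b}) = (\ref{I-1.16b}), valid precisely for $1 < p < \rfrac{2q}{2q-1}$, where no boundary condition is recognized.

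The point requiring genuine care -- and the reason the statement is nontrivial -- is the uniformity of the constants $C_0, C_1$ up to $T = \infty$. On any finite interval maximal regularity is standard, but the constants obtained that way degenerate as $T \to \infty$; the half-line estimate survives only because the Stokes semigroup $e^{-A_qt}$ is \emph{uniformly exponentially stable} on $\lso$, namely (\ref{I-1.20}). I would therefore route the free-evolution estimate through this decay, so that the $L^p(0,\infty;\calD(A_q))$ and $W^{1,p}(0,\infty;\lso)$ norms of $e^{-A_qt}\varphi_0$ stay finite and independent of $T$. The deep analytic input -- boundedness of the Duhamel operator on the half-line -- is exactly what the cited Solonnikov-type theorem provides, so in this paper it is assumed rather than established.
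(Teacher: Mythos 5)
Your proposal is correct and matches the paper's treatment: the paper does not prove Theorem \ref{I-Thm-1.5} at all, but presents it as Solonnikov's celebrated result quoted from \cite{VAS:1977}, \cite[Theorem 2.8.5.(iii), Theorem 2.10.1]{HS:2016}, \cite{S:2006}, \cite[Proposition 4.1]{GGH:2012}, exactly as you defer the core Duhamel estimate to those citations. Your added scaffolding --- the free-evolution/Duhamel split, the trace-space characterization of $\lqaq$, the pressure recovery via $I-P_q$ and Stokes elliptic regularity, and the observation that uniform exponential stability (\ref{I-1.20}) is what permits $T=\infty$ --- is sound and simply makes explicit what the paper leaves to the references.
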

		
		\item \textbf{Maximal $L^p$ regularity on $\lso$ of the Oseen operator $\calA_q, \ 1 < p < \infty, \ 1 < q < \infty$, up to $T < \infty$.} We next transfer the maximal regularity of the Stokes operator $(-A_q)$ on $\lso$-asserted in Theorem \ref{I-Thm-1.5} into the maximal regularity of the Oseen operator $\calA_q = -\nu A_q - A_{o,q}$ in (\ref{I-1.18}) exactly on the same space $\xtpqs$ defined in (\ref{I-1.28}), however only up to $T < \infty$.
		
		\noindent Thus, consider the dynamic Oseen problem in $\{ \psi(t,x), \pi(t,x) \}$ with equilibrium solution $y_e$, see (\ref{I-1.2}):		
		\begin{subequations}\label{I-1.38}
			\begin{align}
			\psi_t - \Delta \psi + L_e(\psi) + \nabla \pi &= F &\text{ in } (0, T] \times \Omega \equiv Q \label{I-1.38a}\\		
			div \ \psi &\equiv 0 &\text{ in } Q\\
			\begin{picture}(0,0)
			\put(-100,7){$\left\{\rule{0pt}{35pt}\right.$}\end{picture}
			\left. \psi \right \rvert_{\Sigma} &\equiv 0 &\text{ in } (0, T] \times \Gamma \equiv \Sigma\\
			\left. \psi \right \rvert_{t = 0} &= \psi_0 &\text{ in } \Omega,
			\end{align}
		\end{subequations}
		\begin{equation}
		L_e(\psi) = (y_e . \nabla) \psi + (\psi. \nabla) y_e \hspace{6cm} \label{I-1.39}
		\end{equation}
		rewritten in abstract form, after applying the Helmholtz projector $P_q$ to (\ref{I-1.38a}) and recalling $\calA_q$ in (\ref{I-1.18}), as
		\begin{equation}\label{I-1.40}
		\psi_t = \calA_q \psi + P_q F = - \nu A_q \psi - A_{o,q} \psi + \Fs, \quad \psi_0 \in \big( \lso, \calD(A_q)\big)_{1-\frac{1}{p},p}
		\end{equation}
		whose solution is 
		\begin{equation}\label{I-1.41}
		\psi(t) = e^{\calA_qt} \psi_0 + \int_{0}^{t} e^{\calA_q(t-\tau)} \Fs(\tau) d \tau.
		\end{equation}
		\begin{equation}\label{I-1.42}
		\psi(t) = e^{-\nu A_qt} \psi_0 + \int_{0}^{t} e^{-\nu A_q(t-\tau)} \Fs(\tau) d \tau - \int_{0}^{t} e^{- \nu A_q(t-\tau)} A_{o,q} \psi(\tau) d \tau.
		\end{equation}
		
		\begin{thm}\label{I-Thm-1.6}
			Let $1 < p,q < \infty, \ 0 < T < \infty$. Assume (as in (\ref{I-1.32})) 
			\begin{equation}\label{I-1.43}
			\Fs \in L^p \big( 0, T; L^q_{\sigma} (\Omega) \big), \quad \psi_0 \in \lqaq
			\end{equation}
			where $\calD(A_q) = \calD(\calA_q)$, see (\ref{I-1.18}). Then there exists a unique solution $\psi \in \xtpqs, \ \pi \in \ytpq$ of the dynamic Oseen problem (\ref{I-1.38}), continuously on the data: that is, there exist constants $C_0, C_1$ independent of $\Fs, \psi_0$ such that
			\begin{align}
			C_0 \norm{\varphi}_{C \big([0,T]; \Bso \big)} &\leq \norm{\varphi}_{\xtpqs} + \norm{\pi}_{\ytpq}\nonumber \\ &\equiv \norm{\varphi'}_{L^p(0,T;L^q(\Omega))} + \norm{A_q \varphi}_{L^p(0,T;L^q(\Omega))} + \norm{\pi}_{\ytpq}\\
			&\leq C_T \bigg \{ \norm{\Fs}_{L^p(0,T;\lso)}  + \norm{\varphi_0}_{\lqaq} \bigg \}
			\end{align}
			where $T < \infty$. Equivalently, for $1 < p, q < \infty$
			\begin{enumerate}[i.]
				\item The map
				\begin{equation}
				\begin{aligned}
				\Fs \longrightarrow \int_{0}^{t} e^{\calA_q(t-\tau)}\Fs(\tau) d\tau \ : \text{continuous}&\\
				L^p(0,T;\lso) &\longrightarrow L^p \big(0,T;\calD(\calA_q) = \calD(A_q) \big)\label{I-1.46}
				\end{aligned}			
				\end{equation}
				where then automatically, see (\ref{I-1.40}) 
				\begin{equation}
				L^p(0,T;\lso) \longrightarrow W^{1,p}(0,T;\lso) \label{I-1.47}
				\end{equation}
				and ultimately
				\begin{equation}
				L^p(0,T;\lso) \longrightarrow \xtpqs \equiv L^p \big(0,T;\calD(A_q) \big) \cap W^{1,p}(0,T;\lso). \label{I-1.48}
				\end{equation}
				\item The s.c. analytic semigroup $e^{\calA_q t}$ generated by the Oseen operator $\calA_q$ (see (\ref{I-1.18})) on the space $\ds \lqaq $ satisfies for $1 < p, q < \infty$
				\begin{equation}
				e^{\calA_q t}: \ \text{continuous} \quad \lqaq \longrightarrow L^p \big(0,T;\calD(\calA_q) = \calD(A_q)  \big) \label{I-1.49}
				\end{equation}
				and hence automatically by (\ref{I-1.28})
				\begin{equation}
				e^{ \calA_q t}: \ \text{continuous} \quad \lqaq \longrightarrow \xtpqs. \label{I-1.50}
				\end{equation}
				In particular, for future use, for $1 < q < \infty, 1 < p < \frac{2q}{2q - 1}$, we have that the s.c. analytic semigroup $\ds e^{\calA_q t}$ on the space $\ds \Bto$, satisfies 
				\begin{equation}
				e^{\calA_q t}: \ \text{continuous} \quad \Bto \longrightarrow L^p \big(0,T;\calD(\calA_q) = \calD(A_q)  \big), \ T < \infty. \label{I-1.51}
				\end{equation}
				and hence automatically
				\begin{equation}
				e^{ \calA_q t}: \ \text{continuous} \quad \Bto \longrightarrow \xtpqs , \ T < \infty. \label{I-1.52}
				\end{equation}
			\end{enumerate}
		\end{thm}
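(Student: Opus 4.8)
The guiding idea is to treat the Oseen dynamics (\ref{I-1.40}) as a \emph{lower-order perturbation} of the rescaled Stokes dynamics and to transfer the maximal regularity of Theorem \ref{I-Thm-1.5} by a Neumann series / fixed-point argument, exploiting that $A_{o,q}$ is subordinate to $A_q^{\rfrac{1}{2}}$. First I would note that maximal $L^p$-regularity of $-\nu A_q$ on $\lso$ follows from that of $-A_q$ (Theorem \ref{I-Thm-1.5}) by the rescaling $t \mapsto \nu t$; hence the solution operator $\ds \Phi(g)(t) = \int_{0}^{t} e^{-\nu A_q(t-\tau)} g(\tau)\, d\tau$ is continuous $L^p(0,T;\lso) \to \xtpqs$ (uniformly in $T \le \infty$, by (\ref{I-1.34})--(\ref{I-1.35})), and the free evolution $e^{-\nu A_q t}\psi_0$ lies in $\xtpqs$ for $\psi_0 \in \lqaq$ (by (\ref{I-1.36a})). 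The variation-of-parameters formula (\ref{I-1.42}) then reads $\ds \psi(t) = e^{-\nu A_q t}\psi_0 + \Phi(\Fs)(t) - \Phi(A_{o,q}\psi)(t)$, which I would cast as a fixed-point equation for the affine map $\calT: \psi \mapsto e^{-\nu A_q t}\psi_0 + \Phi(\Fs) - R\psi$ on $\xtpqs$, where $R := \Phi \circ A_{o,q}$.

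The crucial quantitative input is a smallness-in-$T$ estimate for $R$. Since $\Phi(g)(0)=0$, the map $\calT$ carries the affine set $\{\psi \in \xtpqs : \psi(0) = \psi_0\}$ into itself, and the difference of any two of its elements has vanishing initial trace. For such a zero-trace $\chi \in \xtpqs$ one may write $\chi = \Phi(\chi' + \nu A_q \chi)$; then, combining the relative bound $\norm{A_{o,q}f}_{\lso} \le C_q \norm{A_q^{\rfrac{1}{2}}f}_{\lso}$ recorded below (\ref{I-1.9}) with the analytic-semigroup smoothing $\norm{A_q^{\rfrac{1}{2}} e^{-\nu A_q t}}_{\calL(\lso)} \le C\, t^{-\rfrac{1}{2}}$, one sees that $A_q^{\rfrac{1}{2}}\Phi$ is convolution against the kernel $t^{-\rfrac{1}{2}} \in L^1(0,T)$ with $\ds \norm{t^{-\rfrac{1}{2}}}_{L^1(0,T)} = 2T^{\rfrac{1}{2}}$, so that Young's inequality gives $\ds \norm{A_q^{\rfrac{1}{2}}\chi}_{L^p(0,T;\lso)} \le C\,T^{\rfrac{1}{2}} \norm{\chi}_{\xtpqs}$. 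Together with the $L^p(\lso)\to\xtpqs$ continuity of $\Phi$ this yields $\norm{R\chi}_{\xtpqs} \le C\,C_q\,T^{\rfrac{1}{2}}\norm{\chi}_{\xtpqs}$. Hence for $T = T_0$ sufficiently small $\calT$ is a contraction, and (\ref{I-1.42}) has a unique fixed point $\psi \in X^{T_0}_{p,q,\sigma}$ depending continuously on $\{\Fs,\psi_0\}$; this already delivers (\ref{I-1.46})--(\ref{I-1.52}) on $[0,T_0]$, the memberships (\ref{I-1.47})--(\ref{I-1.48}) being forced by the equation itself, since $A_{o,q}\psi, \Fs \in L^p(\lso)$ make $\psi' \in L^p(\lso)$.

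To reach an arbitrary finite horizon $T$, I would iterate the construction on consecutive subintervals $[k T_0,(k+1)T_0]$, using at each step the trace embedding (\ref{I-1.30}), $\xtpqs \hookrightarrow C([0,T];\Bso)$, to place the restart value $\psi(kT_0)$ back into the interpolation space $\lqaq$, so that Theorem \ref{I-Thm-1.5} applies afresh; concatenating the finitely many pieces produces a solution on all of $[0,T]$ and the asserted estimate, now with a constant $C_T$ that deteriorates as $T \uparrow \infty$ — reflecting the instability of the Oseen semigroup and accounting precisely for the restriction $T < \infty$. The associated pressure $\pi \in \ytpq$ together with its bound is then recovered by applying the pressure part of the Stokes estimate (\ref{I-1.33}) to the Stokes problem driven by the now-known forcing $\Fs - A_{o,q}\psi \in L^p(0,T;\lso)$.

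I expect the main obstacle to be the rigorous justification of the half-power gain $\ds \norm{A_q^{\rfrac{1}{2}}\Phi(g)}_{L^p(0,T;\lso)} \le C\,T^{\rfrac{1}{2}}\norm{g}_{L^p(0,T;\lso)}$ and its compatibility with the zero-trace subspace on which $R$ must act as a contraction: it is exactly this gain, stemming from the first-order (hence $A_q^{\rfrac{1}{2}}$-subordinate) nature of $A_{o,q}$, that upgrades mere relative boundedness into a genuine contraction. The subinterval concatenation, while essentially routine, must also be checked to keep each restart datum in $\lqaq$ rather than merely in $\Bso$, which is where the precise identification (\ref{I-1.24b}) of the real interpolation space enters.
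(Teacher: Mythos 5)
Your proposal is correct, but it takes a genuinely different route from the paper's Appendix B. The paper never sets up a fixed point: it works with the mild solution already furnished by the Oseen semigroup $e^{\calA_q t}$ (Theorem \ref{I-Thm-1.2}(ii)) and proves the maximal-regularity bound as a one-shot a priori estimate on all of $(0,T)$. Specifically, its Step 1 bounds $\norm{\psi}_{L^p(0,T;\lso)}$ by $\norm{\Fs}_{L^p(0,T;\lso)}$ via Young's inequality using the (possibly exponentially growing) bound $\norm{e^{\calA_q t}}_{\calL(\lso)}\le Me^{bt}$ --- which is exactly where $T<\infty$ enters --- and its Step 2 applies Stokes maximal regularity to the same perturbed formula (\ref{I-1.42}) that you use, but replaces your $T^{\rfrac{1}{2}}$-smallness with the interpolation inequality $\norm{A_{o,q}a}_{\lso}\le \varepsilon\norm{A_q a}_{\lso}+C_{\varepsilon}\norm{a}_{\lso}$ of (\ref{I-B.11}), absorbing the term $\varepsilon\norm{A_q\psi}_{L^p}$ into the left-hand side; Part II (the semigroup statement (\ref{I-1.49})) runs the same absorption with Step 1 replaced by boundedness of $e^{\calA_q t}$ on $\Bto$. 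Your contraction-plus-concatenation scheme substitutes both ingredients: smallness of the perturbation comes from the kernel bound $\norm{A_q^{\rfrac{1}{2}}e^{-\nu A_q s}}_{\calL(\lso)}\le Cs^{-\rfrac{1}{2}}$ (whence the $T^{\rfrac{1}{2}}$ factor), and the arbitrary finite horizon comes from iteration on subintervals. The paper's argument buys brevity --- no time-splitting or restart bookkeeping, and the $T$-dependence of the constant is read off directly from $Me^{bT}$ --- at the price of presupposing the generation theory for $\calA_q$; yours buys a self-contained construction of the solution (existence and uniqueness fall out of Banach's theorem rather than semigroup theory), a transparent reason why $C_T$ degenerates as $T\uparrow\infty$, and an explicit recovery of the pressure, which the paper's appendix does not carry out. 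Two finishing touches your write-up still needs, neither of which is a gap: the restart datum lies in $\lqaq$ not by the identification (\ref{I-1.24b}) (which holds only for $p<\rfrac{2q}{2q-1}$, whereas the theorem covers all $1<p,q<\infty$) but by the sharp form of the trace theorem behind (\ref{I-1.30}), which gives $\xtpqs\hookrightarrow C([0,T];\lqaq)$, i.e.\ continuity into the real interpolation space itself; and since (\ref{I-1.46})--(\ref{I-1.52}) are phrased in terms of $e^{\calA_q t}$, you should record that your fixed point, being a $W^{1,p}$-solution, is in particular the unique mild solution (\ref{I-1.41}), so the estimates indeed transfer to the semigroup expressions.
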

		\noindent A proof is given in Appendix \hyperref[I-app-B]{B}. 
	\end{enumerate} 
	
	\begin{rmk}\label{I-Rmk-1.4}
		The literature reports physical situations where the volumetric force $f$ is actually replaced by $\nabla g(x)$; that is, $f$ is a conservative vector field. Thus, returning to Eq (\ref{I-1.2a}) with $f(x)$ replaced now by $\nabla g(x)$ we see that a solution of such stationary problem is $y_e = 0, \ \pi_e = g$, hence $L_e(\cdot) \equiv 0$ by (\ref{I-1.39}). Returning to Eq (\ref{I-1.1a}) with $f$ replaced by $\nabla g(x)$ and applying to the resulting equation the projection operator $P_q$, one obtains in this case the projected equation 
		\begin{equation}\label{I-1.53}
			y_t - \nu P_q \Delta y + P_q \big[ (y \cdot \nabla) y \big] = P_q (mu) \quad \text{in } Q.
		\end{equation}
		\noindent This, along with the solenoidal and boundary conditions (\ref{I-1.1b}), (\ref{I-1.1c}), yields the corresponding abstract form recalling also (\ref{I-1.11})
		\begin{equation}\label{I-1.54}
			y_t + \nu A_q y + \calN_q y = P_q (mu) \quad \text{in } \lso.
		\end{equation}
		\noindent Then $y$-problem (\ref{I-1.54}) is the same as the $z$-problem (\ref{I-1.12a}), except without the Oseen term $A_{o,q}$. The linearized version of problem (\ref{I-1.54}) is then 
		\begin{equation}\label{I-1.55}
			\eta_t + \nu A_q \eta = P_q (mu) \quad \text{in } \lso,
		\end{equation}
		\noindent which is the same as the $w$-problem (\ref{I-1.13a}), except without the Oseen term $A_{o,q}$. The s.c. analytic semigroup $\ds e^{-\nu A_q t}$ driving the linear equation (\ref{I-1.55}) is uniformly stable in $\lso$, see (\ref{I-1.20}), as well as in $\ds \Bto$, see (\ref{I-1.25}). Then, in the case of the present Remark, the present paper may be used to \uline{enhance at will the uniform stability} of the corresponding problem with $u$ given in feedback form as in the RHS of Eq (\ref{I-2.20}) as to obtain a decay rate much bigger than the original $\delta > 0$ in (\ref{I-1.20}) or (\ref{I-1.25}). Thus there is no need to perform the translation $y \longrightarrow z$ of Section \ref{I-Sec-1.8}, when $f$ in (\ref{I-1.2a}) is replaced by $\nabla g(x)$; i.e. $y_e = 0$ in this case. The important relevance of the present Remark will be pointed out in the follow-out paper \cite{LPT} where only finitely many localized tangential boundary feedback controls will be employed to the so far \underline{open} case dim $\Omega = 3$. The corresponding required ``unique continuation property" holds true for the Stokes problem ($y_e = 0$), see \cite{RT:2009}, \cite{RT:2008}.
 	\end{rmk}
	
	\section{Main results}
	
	\subsection{Orientation}\label{I-Sec-2.1}
	
	\noindent All the main results of this paper, Theorems \ref{I-Thm-2.1} through \ref{I-Thm-2.5}, are stated (at first) in the complex state space setting $\ds \lso + i \lso$. Thus, the finitely many stabilizing feedback vectors $p_k,u_k$ constructed in the subsequent proofs belong to the complex finite dimensional unstable subspace $(W^u_N)^*$ and $W^u_N$ respectively. The question then arises as to transfer back these results into the original real setting. This issue was resolved in \cite{BT:2004}. Here, such translation, taken from \cite{BT:2004}, from the results in the complex setting (Theorems \ref{I-Thm-2.1} through \ref{I-Thm-2.5})   into corresponding results in the original real setting is given in Section \ref{I-Sec-2.7}.\\         
	
	\noindent \underline{Step 1}: First, we will show in Theorem \ref{I-Thm-2.2} that the linearized Navier-Stokes problem $w_t = \calA_q w + P_q(mu)$ in (\ref{I-1.13}) can be uniformly (exponentially) stabilized in the basic space $\ds \lso, 1 < q < \infty$ in fact, in the space $\ds \calD(A_q^{\theta}), \ 0 \leq \theta \leq 1$, or $\ds \lqaq$, in particular $\ds \Bto$ by means of an explicitly constructed, finite dimensional spectral-based feedback controller $mu$, localized on $\omega$, whose structure is given in (\ref{I-2.16}).\\
	
	\noindent \underline{Step 2}: Next, we proceed to the non-linear translated Navier-Stokes $z$-problem (\ref{I-1.12}) with a control $u$ having the same structure as the finite-dimensional, spectral based stabilizing control used in the linearized problem (\ref{I-1.13}). This strategy leads to the non-linear feedback $z$-problem (\ref{I-2.20}). We then establish two results for problem (\ref{I-2.20}):\\
	
	\noindent (i) The first, Theorem \ref{I-Thm-2.3}, is that problem (\ref{I-2.20}) is locally well-posed,  i.e. for small initial data $z_0$, in the desired space $\ds \Bto$. It will require the constraint $q>3$, see (\ref{I-8.16}), to obtain $\ds W^{1,q}(\Omega) \hookrightarrow L^{\infty}(\Omega)$ for $d = 3$. In achieving this result, we must factor in what is the deliberate, sought-after goal of the present paper: that is, to obtain (well-posedness and) uniform stabilization of the original non-linear problem (\ref{I-1.1}) near an equilibrium solution, in a function space that does not recognize boundary conditions. This is the space $\ds \Bto$ having only the boundary condition $\ds \left. g \cdot \nu \right \rvert_{\Gamma} = 0$ inherited from the basic $\ds \lso$-space, see (\ref{I-1.16b})  and statement below it. In contrast, we deliberately exclude then the space in (\ref{I-1.16a}), $\ds p > \rfrac{2q}{2q - 1}$, having an explicit additional B.C. In conclusion, for the nonlinear problem, we need to work with the space $\ds \Bto$ in (\ref{I-1.16b}), and this requires for $d = 3$ the range $\ds q > 3, 1 < p < \rfrac{2q}{2q-1}$, that is $\ds 1 < p < \rfrac{6}{5}$ where the boundary conditions are not recognized. In this case the space $\ds \Bto = \big( \lso, \calD(A_q) \big)_{1-\rfrac{1}{p},p}$ with index $\ds 1 - \rfrac{1}{p}$ close to zero is ``close" to the space $\ds L^q(\Omega)$, for $q > 3$. Accordingly, with reference to the feedback $z$-problem (\ref{I-2.20}), we take $z_0 \in \Bto, \ q > 3, 1 < p < \rfrac{6}{5}$ sufficiently small, and show that (\ref{I-2.20}) is well-posed in the function space $\xipqs$ in (\ref{I-1.28}). To this end, we use critically the maximal regularity result Theorem \ref{I-Thm-7.1}. This is Theorem \hyperref[I-Thm-2.3]{2.3}.\\
	
	\noindent (ii) Second, we address the stabilization problem and show that such Navier-Stokes feedback problem  (\ref{I-2.20}) is, in fact, locally exponentially stabilizable in a neighborhood of the \textit{zero} equilibrium solution in the state space $\Bto$. This is Theorem \ref{I-Thm-2.4}.\\
	
	\noindent Such results, Theorem \ref{I-Thm-2.3} and the Theorem \ref{I-Thm-2.4} for the translated Navier-Stokes $z$-problem (\ref{I-2.20}) in feedback form  then at once translate into counterpart results of local well-posedness and local interior stabilization of the \textit{original} $y$-problem (\ref{I-1.1}) in a neighborhood of the equilibrium solution $y_e$, with an explicit finite dimensional feedback control localized on $\omega$ whose structure is given in (\ref{I-2.28b}). Thus Theorem \ref{I-2.5} gives the main result of the present paper.
	
	\subsection{Introducing the problem of feedback stabilization of the linearized $w$-problem (\ref{I-1.13}) on the complexified $\lso$ space.}\label{I-Sec-2.2}
	
	\noindent \textbf{Preliminaries:} In this subsection we take $q$ fixed, $1 < q < \infty$ throughout. Accordingly, to streamline the notation in the preceding setting of Section 1, we shall drop the dependence on $q$ of all relevant quantities and thus write $P, A, A_o, \calA$ instead of $P_q, A_q, A_{o,q}, \calA_q$. We return to the linearized system (\ref{I-1.13}).\\
	
	\noindent Moreover, as in \cite{BT:2004}, \cite{BLT1:2006}, we shall henceforth let $\lso$ denote the complexified space $\lso + i\lso$, whereby then we consider the extension of the linearized problem (\ref{I-1.13}) to such complexified space. Thus, henceforth, $w$ will mean $w + i \tilde{w}$, $u$ will mean $u + i \tilde{u}$, $w_0$ will mean $w_0 + i \tilde{w}_0$:	
	\begin{equation}\label{I-2.1}
	\frac{dw}{dt} + \nu Aw + A_0 w = P(mu), \quad \text{or} \quad \frac{dw}{dt} - \calA w = P(mu), \ w(0) = w_0 \text{ on } \lso.
	\end{equation}	
	\noindent As noted in Theorem \hyperref[I-Thm-1.2(iii)]{1.2(iii)}, the Oseen operator $\calA$ has compact resolvent on $\lso$. It follows that $\calA$ has a discreet point spectrum $\sigma(\calA) = \sigma_p(\calA)$ consisting of isolated eigenvalues $\{ \lambda_j\}_{j = 1}^{\infty}$, which are repeated according to their (finite) algebraic multiplicity $\ell_j$. However, since $\calA$ generates a $C_0$ analytic semigroup on $\lso$, its eigenvalues $\{ \lambda_j\}_{j = 1}^{\infty}$ lie in a triangular sector of a well-known type.\\
	
	\noindent The case of interest in stabilization occurs where $\calA$ has a finite number, say $N$, of eigenvalues $\lambda_1, \lambda_2 ,\lambda_3 ,\dots,\lambda_N$ on a complex half plane $\{ \lambda \in \mathbb{C} : Re~\lambda \geq 0 \}$ which we then order according to their real parts, so that	
	\begin{equation}\label{I-2.2}
	\ldots \leq Re~\lambda_{N+1} < 0 \leq Re~\lambda_N \leq \ldots \leq Re~\lambda_1,
	\end{equation}
	
	\noindent each $\lambda_i, \ i=1,\dots,N$, being an unstable eigenvalue repeated according to its geometric multiplicity $\ell_i$. Let $M$ denote the number of distinct unstable eigenvalues $\lambda_j$ of $\calA$, so that $\ell_i$ is equal to the dimension of the eigenspace corresponding to $\lambda_i$. Instead, $\ds N = \sum_{i = 1}^{M} N_i$ is the sum of the corresponding algebraic multiplicity $N_i$ of $\lambda_i$, where $N_i$ is the dimension of the corresponding generalized eigenspace.\\
	
	\noindent There are results in the literature \cite{JT:1993} that quantify the number of unstable eigenvalues in terms of the system parameters. Denote by $P_N$ and $P_N^*$ the projections given explicitly by \cite[p 178]{TK:1966}, \cite{BT:2004}, \cite{BLT1:2006}
	\begin{subequations}\label{I-2.3}
		\begin{align}
		\label{I-2.3a} P_N &= -\frac{1}{2 \pi i}\int_{\Gamma}\left( \lambda I - \calA \right)^{-1}d \lambda : \lso \text{ onto } W^u_N\\
		\label{I-2.3b} P_N^* &= -\frac{1}{2 \pi i}\int_{\bar{\Gamma}}\left( \lambda I - \calA^* \right)^{-1}d \lambda : (\lso)^* \text{ onto } (W^u_N)^* \subset \lo{q'},
		\end{align}
	\end{subequations}
	
	\noindent by (\ref{I-A.2c}), where $\Gamma$ (respectively, its conjugate counterpart $\bar{\Gamma}$) is a smooth closed curve that separates the unstable spectrum from the stable spectrum of $\calA$ (respectively, $\calA^*$). As in \cite[Sect 3.4, p 37]{BLT1:2006}, following \cite{RT:1975},we decompose the space $\lso$ into the sum of two complementary subspaces (not necessarily orthogonal):
	
	\begin{equation}\label{I-2.4}
	\lso = W^u_N \oplus W^s_N; \quad W^u_N \equiv P_N \lso ;\quad W^s_N \equiv (I - P_N) \lso; \quad \text{ dim } W^u_N = N 
	\end{equation}
	
	\noindent where each of the spaces $W^u_N$ and $W^s_N$ (which depend on $q$, but we suppress such dependence) is invariant under $\calA \ ( = \calA_q)$, and let
	
	\begin{equation}\label{I-2.5}
	\calA^u_N = P_N \calA = \calA |_{W^u_N} ; \quad \calA^s_N = (I - P_N) \calA = \calA |_{W^s_N}
	\end{equation}
	
	\noindent be the restrictions of $\calA$ to $W^u_N$ and $W^s_N$ respectively. The original point spectrum (eigenvalues) $\{ \lambda_j \}_{j=1}^{\infty} $ of $\calA$ is then split into two sets
	
	\begin{equation}\label{I-2.6}
	\sigma (\calA_N^u) = \{ \lambda_j \}_{j=1}^{N}; \quad  \sigma (\calA_N^s) = \{ \lambda_j \}_{j=N+1}^{\infty},
	\end{equation}
	
	\noindent and $W_N^u$ is the generalized eigenspace of $\calA^u_N$ in (\ref{I-2.1}). The system (\ref{I-2.1}) on $\lso$ can accordingly be decomposed as
	
	\begin{equation}\label{I-2.7}
	w = w_N + \zeta_N, \quad w_N = P_N w, \quad \zeta_N = (I-P_N)w. 
	\end{equation}
	
	\noindent After applying $P_N$ and $(I-P_N)$ (which commute with $\calA$) on (\ref{I-2.1}), we obtain via (\ref{I-2.5})
	
	\begin{subequations}\label{I-2.8} 
		\begin{equation}\label{I-2.8a}
		\text{on } W_N^u: w'_N - \calA^u_N w_N = P_N P (mu); \  w_N(0) = P_N w_0
		\end{equation}
		\begin{equation}\label{I-2.8b}
		\text{on } W_N^s: \zeta'_N - \calA^s_N \zeta_N = (I-P_N) P (mu); \  \zeta_N(0) = (I - P_N) w_0
		\end{equation}
	\end{subequations}
	\noindent respectively.\\
	
	\noindent \textbf{Main Result:} We may now state the main feedback stabilization result of the linearized problem (\ref{I-1.13}) (=(\ref{I-2.1})) on the complexified space $\lso$). The proof is constructive. How to construct the finitely many stabilizing vectors will be established in the proof.\\
	
	\noindent We anticipate the fact (noted in (\ref{I-3.2}) and (\ref{I-4.0})) below that, for $1 < p,q < \infty$:	
	\begin{equation}\label{I-2.9}
	\begin{aligned}
	W^u_N = &\text{ space of generalized}\\
	&\text{ eigenfunctions of $\calA_q (= \calA^u_N)$}\\
	&\text{ corresponding to its distinct}\\
	&\text{  unstable eigenvalues}
	\end{aligned}
	\subset
	\begin{cases}
	\begin{aligned}
	&\lqaq\\
	&\big[ \calD(A_q), \lso \big]_{1-\alpha} = \calD(A^{\alpha}_q), \ 0 \leq \alpha \leq 1
	\end{aligned}
	\end{cases}
	\subset \lso. 
	\end{equation}
	
	\subsection{Uniform (exponential) stabilization of the linear finite-dimensional $w_N$-problem (\ref{I-2.8a}) in the space $W^u_N$ by means of a finite-dimensional, explicit, spectral based feedback control localized on $\omega$.} \label{I-Sec-2.3}
	
	\begin{thm}\label{I-Thm-2.1}
		Let $\lambda_1,., \lambda_i,.,\lambda_M$ be the unstable distinct eigenvalues of the Oseen operator $\calA (= \calA_q)$ (see (\ref{I-1.10})) with geometric multiplicity $\ell_i$ and set $K = \sup \ \{\ell_j; j = 1,\dots, M \}.$ Let $\omega$ be an arbitrarily small open portion of the interior with sufficiently smooth boundary $\partial \omega$. Then: Given $\gamma > 0$ arbitrarily large, one can construct suitable interior vectors $[u_1,\dots,u_K]$ in the smooth subspace $W^u_N$ of $\ls(\omega), 1 < q < \infty$, and accordingly obtain a K-dimensional interior controller $u=u_N$ acting on $\omega$, of the form
		
		\begin{equation}\label{I-2.10}
		u = \sum_{k=1}^K \mu_k(t)u_k, \quad u_k \in W^u_N \subset \lso, \quad \mu_k(t) = \text{scalar},
		\end{equation}
		
		\noindent such that, once inserted in the finite dimensional projected $w_N$-system in (\ref{I-2.8}), yields the system
		
		\begin{equation}\label{I-2.11}
		w'_N - \calA^u_N w_N = P_N P \Bigg ( m \Bigg( \sum_{k=1}^{K} \mu_k(t) u_k \Bigg) \Bigg),
		\end{equation}
		
		\noindent whose solution then satisfies the estimate
		
		\begin{equation}\label{I-2.12}
		\norm{w_N(t)}_{\lso} + \norm{u_N(t)}_{\ls(\omega)} \leq C_{\gamma} e^{- \gamma t} \norm{P_N w_0}_{\lso}, \ t \geq 0.
		\end{equation}
		
		\noindent In (\ref{I-2.12}) we may replace the $\lso$-norm, $1 < q < \infty$, alternatively either with the $\ds \big(\lso, \calD(A_q) \big)_{1-\frac{1}{p},p}$ norm, $1 < q < \infty$; or else with the $\ds \big[ \calD(A_q), \lso \big]_{1-\alpha} = \calD(A^{\alpha}_q)$-norm, $ 0 \leq \alpha \leq 1,\ 1 < q < \infty$. In particular, we also have 
		\begin{equation}\label{I-2.13}
		\norm{w_N(t)}_{\Bto} + \norm{u_N(t)}_{\Bto} \leq C_{\gamma} e^{- \gamma t} \norm{P_N w_0}_{\Bto}, \ t \geq 0,
		\end{equation}
		\noindent in the $\Bto$-norm, $1 < q < \infty, \ p < \rfrac{2q}{2q - 1}$.\\
		\noindent [Estimate (\ref{I-2.13}) will be invoked in the nonlinear stabilization proof of Section \ref{I-Sec-9}.]\\
		
		\noindent Moreover, the above control $\ds u = u_N =  \sum_{k=1}^K \mu_k(t)u_k$, the terms  $u_k \in W^u_N,$ in (\ref{I-2.10}) can be chosen in feedback form: that is, of the form $\mu_k (t) = (w_N(t),p_k)_{\omega}$ for suitable vectors $p_k \in (W^u_N)^* \subset \lo{q'}$ depending on $\gamma$. Here and henceforth $\ds (v_1,v_2)_{\omega} = \int_{\omega} v_1 \cdot \bar{v}_2 \ d\omega, \ v_1 \in W^u_N \subset \lso, \ v_2 \in (W^u_N)^* \subset \lo{q'}$. In conclusion, $w_N$ in (\ref{I-2.11}) satisfying (\ref{I-2.12}),(\ref{I-2.13}) is the solution of the following equation on $W^u_N$ (see (\ref{I-2.8})):		
		\begin{subequations}\label{I-2.14}
			\begin{equation}\label{I-2.14a}
			w'_N - \calA^u_N w_N = P_N P \Bigg ( m \Bigg( \sum_{k=1}^{K} (w_N(t),p_k)_{\omega} u_k \Bigg) \Bigg), \ u_k \in W^u_N \subset \lso, \ p_k \in (W^u_N)^* \subset \lo{q'},
			\end{equation}
			
			\noindent rewritten as			
			\begin{equation}\label{I-2.14b}
			w'_N = \bar{A}^u w_N, \quad w_N(t) = e^{\bar{A}^u t}P_N w_0, \quad w_N(0) = P_N w_0.
			\end{equation}
		\end{subequations}
	\end{thm}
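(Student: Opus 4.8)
The plan is to exploit that everything takes place on the finite-dimensional invariant subspace $W^u_N$, so that (\ref{I-2.8a}) reduces to a finite-dimensional linear control system and the problem becomes one of classical pole assignment. Writing the candidate control as $u = \sum_{k=1}^K \mu_k(t) u_k$ with $u_k \in W^u_N$ to be chosen, the forcing term $P_N P(mu)$ in (\ref{I-2.8a}) becomes $\sum_{k=1}^K \mu_k(t) b_k$, where $b_k = P_N P(m u_k) \in W^u_N$. Thus on $W^u_N$ we face the system $w'_N = \calA^u_N w_N + B\mu$, with $B = [b_1, \dots, b_K]$ and $\mu = (\mu_1, \dots, \mu_K)^T$. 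First I would reduce the entire theorem to the assertion that the $u_k$ can be chosen so that the pair $(\calA^u_N, B)$ is controllable on $W^u_N$; granting this, the remainder is finite-dimensional linear systems theory.

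The core step is therefore to verify a controllability criterion for $(\calA^u_N, B)$ and to exhibit the $u_k$ realizing it. I would use the Hautus (PBH) eigenvector test: the pair is controllable if and only if no eigenfunction of the adjoint $(\calA^u_N)^*$ is annihilated by $B^*$. Denoting by $\{\phi^*_{ij}\}_{j=1}^{\ell_i}$ the true eigenfunctions of $(\calA^u_N)^*$ associated with the unstable eigenvalue $\lambda_i$ (there are exactly $\ell_i$ of them, $\ell_i$ being the geometric multiplicity), the test becomes the requirement that for each $i$ the $\ell_i \times K$ matrix with entries $(u_k, \phi^*_{ij})_\omega$ have rank $\ell_i$. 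This is exactly the Kalman-type rank condition invoked later in (\ref{I-3.18}) and (\ref{I-4.13}). A key structural point, to be justified carefully, is that even when $\calA^u_N$ is not semisimple -- so that $W^u_N$ carries a Jordan, not diagonal, structure and its natural basis consists of generalized eigenfunctions -- the controllability test still involves only the true eigenfunctions of the adjoint, the generalized ones imposing no extra independent conditions. Choosing $K = \sup_i \ell_i$ then suffices, since each block contributes at most $\ell_i \le K$ independent constraints.

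The main obstacle is analytic rather than algebraic: one must guarantee that the control vectors $u_k$, which are required to be supported in the arbitrarily small subdomain $\omega$, can be selected so that the matrices $[(u_k, \phi^*_{ij})_\omega]$ have full rank for every $i$ simultaneously. This is possible precisely because the restrictions to $\omega$ of the adjoint eigenfunctions $\{\phi^*_{ij}\}$ remain linearly independent; and this in turn rests on a unique continuation property for the Oseen eigenvalue problem, ensuring that a nontrivial eigenfunction cannot vanish on the open set $\omega$. With such independence secured, a determinant/genericity argument -- or simply choosing the $u_k$ among suitably localized combinations of the $\phi^*_{ij}$ -- produces control vectors realizing the rank condition. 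I expect this unique-continuation-based linear independence on $\omega$ to be the deepest input of the proof.

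Finally, with controllability of $(\calA^u_N, B)$ in hand, I would invoke the finite-dimensional spectral assignment (pole placement) theorem to construct a feedback matrix, equivalently vectors $p_k \in (W^u_N)^* \subset \lo{q'}$ realizing $\mu_k(t) = (w_N(t), p_k)_\omega$, such that the closed-loop generator $\bar{A}^u = \calA^u_N + BF$ has all its eigenvalues with real part strictly less than $-\gamma$. This yields at once (\ref{I-2.14b}) together with the bound $\norm{e^{\bar{A}^u t}}_{\calL(W^u_N)} \le C_\gamma e^{-\gamma t}$. Because $W^u_N$ is finite-dimensional, all the norms appearing in the statement -- the $\lso$ norm, the interpolation norm $\big(\lso, \calD(A_q)\big)_{1-\frac{1}{p},p}$, the fractional-power norms $\calD(A_q^\alpha)$, and in particular the $\Bto$ norm -- are mutually equivalent on $W^u_N$, so (\ref{I-2.12}) and (\ref{I-2.13}) follow immediately. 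The control estimate is automatic, since $u_N = \sum_k (w_N, p_k)_\omega u_k$ is a bounded linear function of $w_N$, whence $\norm{u_N(t)} \le C \norm{w_N(t)}$ and the claimed decay is inherited.
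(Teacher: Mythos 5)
Your proposal is correct and follows essentially the same route as the paper: reduction to the finite-dimensional pair $(\calA^u_N,B)$ with $b_k = P_NP(mu_k)$, a rank condition on the matrices $[(u_k,\phi^*_{ij})_{\omega}]$ involving only the true eigenfunctions of the adjoint, linear independence of their restrictions to $\omega$ via unique continuation for the over-determined Oseen eigenproblem, then pole placement and equivalence of all the stated norms on the smooth finite-dimensional subspace $W^u_N$. The only difference is presentational: where the paper derives the rank test from the Jordan-form ``last rows'' criterion via bi-orthogonal bases of generalized eigenfunctions (its Sections 3--4), you invoke the Hautus/PBH eigenvector test directly, which is precisely the proof the paper itself indicates for its Theorem 4.1, so the generalized eigenfunctions drop out automatically rather than by explicit computation.
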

	\noindent A proof of Theorem \ref{I-Thm-2.1} is given in Section \ref{I-Sec-5}.
	\noindent \subsection{Global well-posedness and uniform exponential stabilization on the linearized $w$-problem (\ref{I-2.1}) in various $\lso$-based spaces, by means of the same feedback control obtained for the $w_N$-problem in Section \ref{I-Sec-2.3}.}
	
	\noindent Again, $1 < q < \infty$ throughout this section.
	\begin{thm}\label{I-Thm-2.2}
		With reference to the unstable, possibly repeated, eigenvalues $\{ \lambda \}_{j=1}^N$ in (\ref{I-2.2}), $M$ of which are distinct, let $\varepsilon > 0$ and set $\gamma_0 = |Re~ \lambda_{N+1}| - \varepsilon$. Then the same K-dimensional feedback controller 
		
		\begin{equation}\label{I-2.15}
		u = u_N = \sum_{k = 1}^{K} (w_N(t),p_k)_{\omega} u_k, \quad u_k \in W^u_N \subset \lso, \ p_k \in (W^u_N)^* \subset \lo{q'},
		\end{equation}
		
		\noindent constructed in Theorem \ref{I-Thm-2.1}, (\ref{I-2.14a}) and yielding estimate (\ref{I-2.12}), (\ref{I-2.13}) for the finite-dimensional projected $w_N$-system (\ref{I-2.8}), once inserted, this time in the full linearized $w$-problem (\ref{I-2.1}), yields the linearized feedback dynamics $(w_N = P_N w)$:
		
		\begin{equation}\label{I-2.16}
		\frac{dw}{dt} = \calA w + P \Bigg ( m \Bigg( \sum_{k=1}^{K} (P_N w,p_k)_{\omega} u_k \Bigg) \Bigg) \equiv \BA_{_F} w
		\end{equation}
		
		\noindent where $\BA_{_F}$ is the generator of a s.c. analytic semigroup in the space $\lso$. Here, $\ds \calA = \calA_q, \ P = P_q, \ \BA_{_F} = \BA_{_{F,q}}$Moreover, such dynamics $w$ in (\ref{I-2.16}) (equivalently, such generator $\BA_{_F}$ in (\ref{I-2.16})) is uniformly stable in the space $\lso$:
		
		\begin{equation}\label{I-2.17}
		\norm{e^{\BA_{_F} t} w_0}_{\lso} = \norm{w(t;w_0)}_{\lso} \leq C_{\gamma_0} e^{-\gamma_0 t}\norm{w_0}_{\lso}, \quad t \geq 0
		\end{equation} 
		\noindent or for $0 < \theta < 1$ and $\delta > 0$ arbitrarily small
		
		\begin{subequations}\label{I-2.18}
			\begin{align}
			&C_{\gamma_0, \theta} e^{-\gamma_0 t}\norm{A^{\theta}_q \ w_0}_{\lso}, \quad t \geq 0, \ w_0 \in \calD(A^{\theta}_q) \label{I-2.18a}\\
			\begin{picture}(110,0)
			\put(-96,10){$ \norm{ A^{\theta}_q \  e^{\BA_{_F} t} w_0}_{\lso} = \norm{ A^{\theta}_q \ w(t;w_0)}_{\lso} \leq \left\{\rule{0pt}{25pt}\right.$}\end{picture}
			&C_{\gamma_0, \theta, \delta} e^{-\gamma_0 t}\norm{w_0}_{\lso}, \quad t \geq \delta > 0. \label{I-2.18b}
			\end{align}
		\end{subequations}
		\noindent As in the case of Theorem \ref{I-2.1}, we may replace the $\lso$-norm in (\ref{I-2.17}), $1 < q < \infty$, with the \\ $\ds \lqaq$-norm, $1 < p,q < \infty$; in particular, with the $\ds \Bto$-norm
		\begin{equation}\label{I-2.19}
		\begin{aligned}
		\norm{e^{\BA_{_F} t} w_0}_{\Bto} = \norm{w(t;w_0)}_{\Bto} &\leq C_{\gamma_0} e^{-\gamma_0 t}\norm{w_0}_{\Bto}, \ t  \geq 0\\
		\quad & \hspace{4cm} 1 < q < \infty, \ 1 < p < \frac{2q}{2q-1}.
		\end{aligned}
		\end{equation}
	\end{thm}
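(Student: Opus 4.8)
The plan is to realize the closed-loop generator $\BA_{_F}$ as a bounded, finite-rank perturbation of the Oseen operator $\calA$, and then to exploit the spectral splitting $\lso = W^u_N \oplus W^s_N$ of (\ref{I-2.4})--(\ref{I-2.8}): the decay on the finite-dimensional unstable part $W^u_N$ is supplied by Theorem \ref{I-Thm-2.1}, while on the stable part $W^s_N$ the analytic semigroup decays on its own at the rate dictated by $\Re\lambda_{N+1}$. First I would record that $\BA_{_F} = \calA + \mathcal{F}$, where the feedback operator $\mathcal{F}w = P\big(m\sum_{k=1}^K (P_N w, p_k)_\omega u_k\big)$ is bounded and of finite rank on $\lso$: the Riesz projection $P_N:\lso\to W^u_N$ in (\ref{I-2.3a}) is bounded, each map $w\mapsto (P_N w,p_k)_\omega$ is a bounded functional by H\"older since $p_k\in (W^u_N)^*\subset\lo{q'}$, $m$ is a bounded multiplier, and $P=P_q$ is bounded under Assumption (H-D). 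Since $\calA$ generates a s.c.\ analytic semigroup on $\lso$ by Theorem \ref{I-Thm-1.2}(ii), the bounded perturbation theorem \cite[Cor.\ 2.4, p.\ 81]{P:1983} gives at once that $\BA_{_F}$ generates a s.c.\ analytic semigroup with $\calD(\BA_{_F})=\calD(\calA)=\calD(A_q)$.

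The heart of the argument is the stability estimate (\ref{I-2.17}). Applying $P_N$ and $(I-P_N)$ to (\ref{I-2.16}) and using that $\mathcal{F}$ sees $w$ only through $P_N w=w_N$, I would recover on $W^u_N$ exactly the feedback system (\ref{I-2.14a}), whose solution by Theorem \ref{I-Thm-2.1} obeys $\norm{w_N(t)}\le C_\gamma e^{-\gamma t}\norm{P_N w_0}$, with $\gamma$ chosen (as we may) strictly larger than $\gamma_0$. On $W^s_N$ one gets the forced equation
\[
\zeta_N' = \calA^s_N\zeta_N + g(t),\qquad g(t)=(I-P_N)P\Big(m\textstyle\sum_{k=1}^K (w_N(t),p_k)_\omega u_k\Big),
\]
with $\norm{g(t)}\le C e^{-\gamma t}\norm{P_N w_0}$. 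The decisive fact is that $\calA^s_N$ has spectrum $\{\lambda_j\}_{j\ge N+1}$ with $\sup_{j\ge N+1}\Re\lambda_j=\Re\lambda_{N+1}<0$; since its semigroup is analytic, the growth bound equals the spectral bound, so $\norm{e^{\calA^s_N t}}\le C_\varepsilon e^{-\gamma_0 t}$ with $\gamma_0=|\Re\lambda_{N+1}|-\varepsilon$. Variation of parameters together with the elementary convolution bound $\int_0^t e^{-\gamma_0(t-s)}e^{-\gamma s}\,ds\le (\gamma-\gamma_0)^{-1}e^{-\gamma_0 t}$ then yields $\norm{\zeta_N(t)}\le C e^{-\gamma_0 t}\norm{w_0}$, and summing $w=w_N+\zeta_N$ proves (\ref{I-2.17}).

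For (\ref{I-2.18}) I would combine analyticity with the fact that the bounded (and $A_q^{1/2}$-lower-order) perturbation preserves fractional scales, so that $\calD(A_q^\theta)=\calD((-\BA_{_F})^\theta)$ with equivalent norms for $0<\theta<1$ (using also $\calD(A_{o,q})=\calD(A_q^{1/2})$ to match the Oseen and Stokes fractional scales). Then for (\ref{I-2.18a}), commuting $(-\BA_{_F})^\theta$ through its semigroup gives $\norm{A_q^\theta e^{\BA_{_F}t}w_0}\le C\norm{e^{\BA_{_F}t}(-\BA_{_F})^\theta w_0}\le C e^{-\gamma_0 t}\norm{A_q^\theta w_0}$; for (\ref{I-2.18b}) I would factor $A_q^\theta e^{\BA_{_F}t}=\big(A_q^\theta e^{\BA_{_F}\delta}\big)\,e^{\BA_{_F}(t-\delta)}$, the first factor bounded by analytic smoothing and the second decaying by (\ref{I-2.17}). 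The Besov estimate (\ref{I-2.19}) follows by repeating the decomposition in $\Bto$: the projections $P_N,(I-P_N)$ are bounded there since $W^u_N$ is finite dimensional and smooth, contained in every relevant space by (\ref{I-2.9}); all norms on $W^u_N$ are equivalent, so (\ref{I-2.13}) supplies the $w_N$-decay; and $e^{\calA^s_N t}$ is exponentially stable on $W^s_N\subset\Bto$ at rate $\gamma_0$ by Theorem \ref{I-Thm-1.4} and the same spectral-bound reasoning.

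The main obstacle is the stable-part analysis in the genuinely non-Hilbertian $L^q$/Besov setting. One must check that the Riesz projections of (\ref{I-2.3}) split the space compatibly with real interpolation, that the analytic semigroup generated by $\calA^s_N$ satisfies the spectrum-determined growth condition so the sharp rate $\gamma_0=|\Re\lambda_{N+1}|-\varepsilon$ is actually attained, and that the fractional-power domains survive the feedback perturbation --- none of which may invoke self-adjointness or orthogonality as in the Hilbert case.
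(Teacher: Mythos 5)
Your proposal is correct and takes essentially the same approach as the paper's proof (Section \ref{I-Sec-6}): realize $\BA_{_F}$ as a bounded finite-rank perturbation of $\calA_q$ to get generation and analyticity, split $w = w_N + \zeta_N$ via $P_N$, invoke Theorem \ref{I-Thm-2.1} on $W^u_N$ with $\gamma > \gamma_0$, and on $W^s_N$ apply variation of parameters together with the decay of the analytic semigroup $e^{\calA^s_N t}$ at rate $\gamma_0$ and the convolution estimate, repeating the argument verbatim in the $\Bto$-norm. The paper disposes of (\ref{I-2.18}) by citing the analyticity-based computations of \cite[p 1473]{BT:2004}, which is precisely what your smoothing/factorization argument spells out.
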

	\noindent A proof of Theorem \ref{I-Thm-2.2} is given in Section \ref{I-Sec-6}.
	
	\subsection{Local well-posedness and uniform (exponential) null stabilization of the translated nonlinear $z$-problem (\ref{I-1.7}) or (\ref{I-1.12}) by means of a finite dimensional explicit, spectral based feedback control localized on $\omega$.}\label{I-Sec-2.5}
	
	Starting with the present section, the nonlinearity of problem (\ref{I-1.1}) will impose for $d=3$ the requirement $q > 3$, see (\ref{I-8.16}) below. As our deliberate goal is to obtain the stabilization result in the space $\ds \Bto$ which does not recognize boundary conditions, then the limitation $p < \rfrac{2q}{2q - 1}$ of this space applies. In conclusion, our well-posedness and stabilization results will hold under the restriction $q > 3, 1 < p < \rfrac{6}{5}$ for $d = 3$, and $q > 2, 1 < p < \rfrac{4}{3}$ for $d = 2$. 
	
	\begin{thm}\label{I-Thm-2.3}
		For $d = 3$, let $1 < p < \rfrac{6}{5}$ and $q > 3$, while for $d = 2$, let $1 < p < \rfrac{4}{3}$ and $q > 2$. Consider the nonlinear $z$-problem (\ref{I-1.12}) in the following feedback form
		\begin{equation}\label{I-2.20}
		\frac{dz}{dt} - \calA_q z + \calN_q z = P_q \Bigg ( m \Bigg( \sum_{k=1}^{K} (P_N z,p_k)_{\omega} u_k \Bigg) \Bigg)
		\end{equation}
		\noindent i.e. subject to a feedback control of the same structure as in the linear $w$-dynamics (\ref{I-2.16}), Here $p_k,u_k$ are the same vectors as constructed in Theorem \ref{I-Thm-2.1}, and appearing in (\ref{I-2.14}) or (\ref{I-2.16}). There exists a positive constant  $\rho > 0$ such that, if the initial condition $z_0$ satisfies
		\begin{equation}\label{I-2.21}
		\norm{z_0}_{\Bto} < \rho,
		\end{equation}
		then problem (\ref{I-2.20}) defines a unique solution $z$ in the space (see (\ref{I-1.28}), (\ref{I-1.30}))
		\begin{align}
		z \in \xipqs &\equiv L^p(0,\infty ; \calD(A_q)) \cap W^{1,p}(0, \infty; \lso)\label{I-2.22} \\
		&\hookrightarrow C([0, \infty); \Bto), \label{I-2.23}
		\end{align}
		where $\calD(A_q)$ is topologically $W^{2,q}(\Omega) \cap \lso$, see (\ref{I-1.8}).
	\end{thm}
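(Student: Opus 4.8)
The plan is to recast (\ref{I-2.20}) as a fixed-point equation built on the feedback generator $\BA_{_F}=\BA_{_{F,q}}$ of Theorem \ref{I-Thm-2.2} and to solve it by a contraction argument in the maximal-regularity space $\xipqs$. Absorbing the feedback term on the right of (\ref{I-2.20}) into the linear part, (\ref{I-2.20}) becomes
\begin{equation*}
z_t = \BA_{_F} z - \calN_q z, \qquad z(0) = z_0,
\end{equation*}
whose variation-of-parameters form is
\begin{equation*}
z(t) = e^{\BA_{_F} t} z_0 - \int_0^t e^{\BA_{_F}(t-\tau)} \calN_q z(\tau)\, d\tau \equiv (\calT z)(t).
\end{equation*}
I would then seek a fixed point of $\calT$ in a small ball of $\xipqs$.

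Two linear ingredients are needed, both furnished by the maximal $L^p$-regularity of $\BA_{_F}$ on $\lso$ up to $T=\infty$ (the feedback analogue of the finite-horizon Oseen regularity of Theorem \ref{I-Thm-1.6}, i.e.\ Theorem \ref{I-Thm-7.1}); the passage from a finite horizon to $T=\infty$ is precisely where the uniform exponential stability (\ref{I-2.17}) of the feedback semigroup in Theorem \ref{I-Thm-2.2} is used. First, the free evolution satisfies
\begin{equation*}
\norm{e^{\BA_{_F}(\cdot)} z_0}_{\xipqs} \le C \norm{z_0}_{\Bto},
\end{equation*}
i.e.\ $e^{\BA_{_F} t}$ maps $\Bto$ continuously into $\xipqs$ (the analogue of (\ref{I-1.52})). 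Second, the forcing map
\begin{equation*}
F \longmapsto \int_0^t e^{\BA_{_F}(t-\tau)} F(\tau)\, d\tau \ : \ \lplqs \longrightarrow \xipqs
\end{equation*}
is bounded (the analogue of (\ref{I-1.48})). Together with the embedding (\ref{I-1.30}), $\xipqs \hookrightarrow C([0,\infty);\Bto)$, these reduce everything to a single nonlinear estimate on $\calN_q$.

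The crux is the bilinear estimate
\begin{equation*}
\norm{\calN_q z}_{\lplqs} \le C \norm{z}_{\xipqs}^2, \qquad
\norm{\calN_q z_1 - \calN_q z_2}_{\lplqs} \le C\big(\norm{z_1}_{\xipqs}+\norm{z_2}_{\xipqs}\big)\norm{z_1-z_2}_{\xipqs}.
\end{equation*}
Since $P_q$ is bounded on $L^q(\Omega)$ and $\calN_q z = P_q[(z\cdot\nabla)z]$, it suffices to bound $\norm{(z\cdot\nabla)z}_{L^q(\Omega)}\le \norm{z}_{L^\infty(\Omega)}\norm{\nabla z}_{L^q(\Omega)}$ in $L^p(0,\infty)$. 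I would control $\norm{\nabla z}_{L^q(\Omega)}$ by $\norm{A_q^{\rfrac{1}{2}}z}_{\lso}$ via Theorem \ref{I-Thm-1.3}, and $\norm{z}_{L^\infty(\Omega)}$ by $\norm{A_q^{\alpha} z}_{\lso}$ for some $\alpha>\tfrac{d}{2q}$ through the embedding $\calD(A_q^{\alpha})\hookrightarrow L^\infty(\Omega)$; this is exactly where the hypothesis $q>d$ enters, guaranteeing $W^{1,q}(\Omega)\hookrightarrow L^\infty(\Omega)$ as noted below (\ref{I-1.11}). To obtain the time integrability I would invoke the mixed-derivative embedding $\xipqs \hookrightarrow L^{r}(0,\infty;\calD(A_q^{\beta}))$ with $\tfrac1r = \tfrac1p-(1-\beta)$, applied to the two factors with exponents $(\beta_1,r_1)$ and $(\beta_2,r_2)$ chosen so that $\beta_1>\tfrac{d}{2q}$, $\beta_2\ge\tfrac12$, and $\tfrac1{r_1}+\tfrac1{r_2}=\tfrac1p$; these force $\beta_1+\beta_2 = 2-\tfrac1p>1$, which is admissible precisely because $p>1$ and $q>d$. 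A Hölder inequality in time then yields the quadratic bound, and the same computation with one factor replaced by $z_1-z_2$ gives the Lipschitz estimate.

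Closing the argument is then routine. For $\norm{z_0}_{\Bto}<\rho$ with $\rho$ small, the two linear bounds and the quadratic estimate show that $\calT$ maps the ball $\{\norm{z}_{\xipqs}\le R\}$ into itself and is a strict contraction there, provided $R\sim\norm{z_0}_{\Bto}$ and $\rho$ are small; the Banach fixed-point theorem delivers the unique $z\in\xipqs$, and (\ref{I-1.30}) yields (\ref{I-2.23}). I expect the main obstacle to be the nonlinear estimate above: one must verify that the scaling of the mixed space-time norm of $\xipqs$ is compatible with the quadratic gradient nonlinearity in the critical low-regularity space $\Bto$, and it is the restriction $q>d$ (hence $1<p<\tfrac{2q}{2q-1}$) that makes this compatibility hold.
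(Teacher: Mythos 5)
Your architecture coincides with the paper's proof in Section \ref{I-Sec-8}: the same fixed-point map (the paper's $\calF(z_0,f)$ in (\ref{I-8.5})), the same two linear ingredients supplied by the $T=\infty$ maximal regularity of $\BA_{_{F,q}}$ (Theorem \ref{I-Thm-7.1}, whose proof indeed hinges on the uniform decay (\ref{I-2.17}) exactly as you say), and the same small-ball Banach contraction scheme (Theorems \ref{I-Thm-8.2}, \ref{I-Thm-8.3}). The genuine difference is how you prove the key bilinear bound $\norm{\calN_q z}_{\lplqs}\le C\norm{z}^2_{\xipqs}$, i.e.\ the paper's (\ref{I-8.19}). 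You use the split $\norm{(z\cdot\nabla)z}_{L^q(\Omega)}\le\norm{z}_{L^\infty(\Omega)}\norm{\nabla z}_{L^q(\Omega)}$, which places both factors at intermediate regularities and therefore requires H\"{o}lder in time together with the embedding $\xipqs\hookrightarrow L^{r}(0,\infty;\calD(A_q^{\beta}))$, $\rfrac{1}{r}=\rfrac{1}{p}-(1-\beta)$; that embedding is nowhere in the paper — it is the Sobolevskii mixed-derivative theorem combined with temporal Sobolev embedding, and it needs bounded imaginary powers (or $R$-sectoriality) of $A_q$, true for the Stokes operator on $\lso$ by Giga's results but a citation you must supply. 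The paper instead uses the reverse split $\norm{f}_{L^q(\Omega)}\norm{\nabla f}_{L^\infty(\Omega)}$ (see (\ref{I-8.12})--(\ref{I-8.18})): since $z(t)\in\calD(A_q)\subset W^{2,q}(\Omega)$ for a.e.\ $t$, one gets $\nabla z\in L^p(0,\infty;W^{1,q}(\Omega))\hookrightarrow L^p(0,\infty;L^{\infty}(\Omega))$ by (\ref{I-8.16}) — this is exactly where $q>d$ enters — while $z\in L^{\infty}(0,\infty;\lso)$ by (\ref{I-8.15a}); no fractional powers and no time-H\"{o}lder bookkeeping, so the argument stays entirely within the paper's toolkit. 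What your route buys is flexibility: your exponent constraints $\beta_1+\beta_2=2-\rfrac{1}{p}$, $\beta_1>\rfrac{d}{2q}$, $\beta_2\ge\rfrac{1}{2}$ are feasible roughly for $q>d-1$ (taking $p$ near $\rfrac{2q}{2q-1}$), so your method could in principle relax $q>d$; what the paper's buys is elementarity. One inaccuracy to repair: you assert that $q>d$ enters your argument by ``guaranteeing $W^{1,q}(\Omega)\hookrightarrow L^\infty(\Omega)$,'' but your estimate never uses that embedding — in your scheme the hypotheses on $p,q$ enter only through the solvability of the exponent system for $\beta_1,\beta_2,r_1,r_2$, and you should state that constraint honestly rather than borrow the paper's justification.
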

	\noindent A proof of Theorem \ref{I-Thm-2.3} is given in Section \ref{I-Sec-8}.
	
	\begin{thm}\label{I-Thm-2.4}
		In the situation of Theorem \ref{I-Thm-2.3}, we have that such solution is uniformly stable on the space $\Bto$: there exist constants $\widetilde{\gamma} > 0, M_{\widetilde{\gamma}} \geq 1$, such that said solution satisfies 
		\begin{equation}\label{I-2.24}
		\begin{aligned}
		\norm{z(t;z_0)}_{\Bto} &\leq M_{\widetilde{\gamma}} e^{-\widetilde{\gamma} t}\norm{z_0}_{\Bto}, \ t \geq 0.
		\end{aligned}
		\end{equation}
	\end{thm}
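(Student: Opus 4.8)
The plan is to upgrade the boundedness-in-time already obtained in Theorem~\ref{I-Thm-2.3} to genuine exponential decay by running the same fixed-point scheme for the \emph{exponentially reweighted} unknown. Fix $\widetilde{\gamma}$ with $0 < \widetilde{\gamma} < \gamma_0$, where $\gamma_0 = |Re~\lambda_{N+1}| - \varepsilon$ is the linearized decay rate furnished by Theorem~\ref{I-Thm-2.2}, and set $v(t) = e^{\widetilde{\gamma} t} z(t)$. Since the nonlinear term $\calN_q$ in (\ref{I-1.11}) is quadratic, $\calN_q(e^{-\widetilde{\gamma} t} v) = e^{-2\widetilde{\gamma} t}\calN_q(v)$, so that $v$ satisfies the shifted feedback equation
\begin{equation}
\frac{dv}{dt} = (\BA_{_F} + \widetilde{\gamma} I) v - e^{-\widetilde{\gamma} t}\calN_q(v), \qquad v(0) = z_0, \nonumber
\end{equation}
with mild form $v(t) = e^{(\BA_{_F} + \widetilde{\gamma} I)t} z_0 - \int_0^t e^{(\BA_{_F} + \widetilde{\gamma} I)(t-s)} e^{-\widetilde{\gamma} s}\calN_q(v(s))\, ds$. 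The goal is to show $v \in \xipqs$ with $\norm{v}_{\xipqs} \le M \norm{z_0}_{\Bto}$; then the trace embedding (\ref{I-2.23}), $\xipqs \hookrightarrow C([0,\infty);\Bto)$, yields $\sup_{t\ge0} e^{\widetilde{\gamma} t}\norm{z(t)}_{\Bto} = \sup_{t\ge0}\norm{v(t)}_{\Bto} \le M\norm{z_0}_{\Bto}$, which is exactly (\ref{I-2.24}).

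The first step is to observe that the reweighting preserves the linear machinery. Because $e^{\BA_{_F}t}$ is analytic and, by (\ref{I-2.19}), exponentially stable in $\Bto$ (and in $\lso$) with rate $\gamma_0$, the choice $\widetilde{\gamma} < \gamma_0$ guarantees that $\BA_{_F} + \widetilde{\gamma} I$ still generates an analytic, exponentially stable semigroup, now with rate $\gamma_0 - \widetilde{\gamma} > 0$. I would then invoke the infinite-horizon maximal $L^p$-regularity of $\BA_{_F}$ (Theorem~\ref{I-Thm-7.1}, the engine behind Theorem~\ref{I-Thm-2.3}) to conclude that, uniformly in $T = \infty$, the initial-data map $z_0 \mapsto e^{(\BA_{_F} + \widetilde{\gamma} I)\cdot} z_0$ is bounded $\Bto \to \xipqs$, and the convolution map $g \mapsto \int_0^{\cdot} e^{(\BA_{_F} + \widetilde{\gamma} I)(\cdot - s)} g(s)\, ds$ is bounded $L^p(0,\infty;\lso) \to \xipqs$.

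Next I would close the nonlinear estimate exactly as in Section~\ref{I-Sec-8}. The quadratic bound established there gives $\norm{\calN_q(v_1) - \calN_q(v_2)}_{L^p(0,\infty;\lso)} \le C\big(\norm{v_1}_{\xipqs} + \norm{v_2}_{\xipqs}\big)\norm{v_1 - v_2}_{\xipqs}$, and since the extra weight satisfies $e^{-\widetilde{\gamma} s} \le 1$, it only helps: $\norm{e^{-\widetilde{\gamma}\cdot}\calN_q(v)}_{L^p(0,\infty;\lso)} \le C\norm{v}_{\xipqs}^2$. Combining this with the linear bounds, the map $v \mapsto e^{(\BA_{_F} + \widetilde{\gamma} I)\cdot} z_0 - \int_0^{\cdot} e^{(\BA_{_F} + \widetilde{\gamma} I)(\cdot - s)} e^{-\widetilde{\gamma} s}\calN_q(v(s))\, ds$ sends a small ball of $\xipqs$ into itself and is a contraction provided $\norm{z_0}_{\Bto} < \rho$ is small enough (shrinking the $\rho$ of Theorem~\ref{I-Thm-2.3} if necessary). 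Its fixed point obeys $\norm{v}_{\xipqs} \le M\norm{z_0}_{\Bto}$, and by the uniqueness assertion of Theorem~\ref{I-Thm-2.3} the function $e^{-\widetilde{\gamma} t} v(t)$ coincides with the solution $z$ already constructed there.

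The hard part will be the infinite-horizon maximal regularity of the \emph{shifted} feedback generator $\BA_{_F} + \widetilde{\gamma} I$. The maximal regularity statements quoted in Section~\ref{I-Sec-1} hold up to $T = \infty$ only in the stable Stokes case (Theorem~\ref{I-Thm-1.5}), whereas the Oseen version (Theorem~\ref{I-Thm-1.6}) is stated for $T < \infty$ only; the crucial point is that $\BA_{_F}$ is a bounded, finite-rank feedback perturbation of $\calA_q$ that has been rendered genuinely exponentially stable, so that its decay can be transferred — through the shift — into a global-in-time maximal regularity estimate. I expect this to be the delicate ingredient; once it is in hand, the quadratic structure of $\calN_q$ supplies the decaying weight for free, and the contraction argument becomes a routine repetition of the one used for Theorem~\ref{I-Thm-2.3}.
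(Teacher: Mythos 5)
Your proposal is correct, but it takes a genuinely different route from the paper's own proof. The paper (Section \ref{I-Sec-9}) never reweights the unknown: it first derives the pointwise bound $\norm{z(t)}_{\Bto} \leq M_{\gamma_0}e^{-\gamma_0 t}\norm{z_0}_{\Bto} + C_3\norm{z}^2_{\xipqs}$ from the mild formula (\ref{I-9.1}), using the embedding (\ref{I-1.30}), the maximal regularity property (\ref{I-7.13}) and the quadratic estimate (\ref{I-8.19}); it then feeds in the a priori bound $\norm{z}_{\xipqs}\leq 2M_1\norm{z_0}_{\Bto}$ of (\ref{I-9.13}) coming from the fixed-point construction, fixes $T$ large and $r_1$ small so that $\beta \equiv \widehat{M}e^{-\gamma_0 T}+4\widehat{M}^2C_3r_1<1$ as in (\ref{I-9.17}), and iterates $\norm{z(nT)}_{\Bto}\leq \beta^n\norm{z_0}_{\Bto}$ via the semigroup property, which yields (\ref{I-2.24}) with an implicit rate (Remark \ref{I-Rmk-9.1}: $\wti{\gamma}\sim\gamma_0-\ln\widehat{M}/T$). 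You instead set $v=e^{\wti{\gamma}t}z$, exploit the quadratic homogeneity $\calN_q(e^{-\wti{\gamma}t}v)=e^{-2\wti{\gamma}t}\calN_q(v)$, and rerun the Section \ref{I-Sec-8} contraction for the shifted generator $\BA_{_F}+\wti{\gamma}I$. This closes, and the one ingredient you rightly flag as delicate -- infinite-horizon maximal regularity of the shifted generator -- does hold, though not by citing Theorem \ref{I-Thm-7.1} as a black box: its proof must be rerun with $P_qG$ replaced by $P_qG+\wti{\gamma}I$, which works verbatim because that proof uses only (a) exponential stability of the feedback semigroup (the shifted one still decays, at rate $\gamma_0-\wti{\gamma}>0$), (b) boundedness of the perturbing operator, and (c) Stokes maximal regularity up to $T=\infty$ together with the interpolation inequality for $A_{o,q}$. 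As to what each approach buys: the paper's argument requires no new linear theory and reuses Theorem \ref{I-Thm-7.1} and (\ref{I-8.19}) untouched, at the price of an implicit decay rate extracted from a discrete-time contraction; your argument costs a (routine) re-derivation of Section \ref{I-Sec-7} for the shifted operator, but in exchange delivers any prescribed rate $\wti{\gamma}<\gamma_0$ in a single step, with the smallness threshold on $\norm{z_0}_{\Bto}$ degenerating as $\wti{\gamma}\uparrow\gamma_0$ -- a quantitatively sharper form of the qualitative relation the paper only sketches in Remark \ref{I-Rmk-9.1}. One point to spell out when you write this up: the identification of your fixed point with the solution of Theorem \ref{I-Thm-2.3} requires checking both that $e^{-\wti{\gamma}t}v$ solves the unweighted integral equation (again by quadratic homogeneity) and that it lies in the small ball of $\xipqs$ where the uniqueness of Theorem \ref{I-Thm-8.1} applies; this is immediate since multiplication by $e^{-\wti{\gamma}t}$ is bounded on $\xipqs$, but it is the step that ties your $v$ back to the $z$ whose decay is being asserted.
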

	\noindent A proof of Theorem \ref{I-Thm-2.4} is given in Section \ref{I-Sec-9}. It will be critically based on the maximal regularity of the semigroup $\ds e^{\BA_{_F} t}$ giving the solution of the feedback $w$-problem (\ref{I-2.16}), $\ds \BA_{_{F}} = \BA_{_{F,q}}$. Remark \ref{I-Rmk-9.1}, at the end of Section \ref{I-Sec-9}, will provide insight on the relationship between $\wti{\gamma}$ in the nonlinear case in (\ref{I-2.24}) and $\gamma_0$ in the corresponding linear case in (\ref{I-2.17}). 
	
	\subsection{Local well-posedness and uniform (exponential) stabilization of the original nonlinear $y$-problem (\ref{I-1.1}) in a neighborhood of an equilibrium solution $y_e$, by means of a finite dimensional explicit, spectral based feedback control localized on $\omega$.}\label{I-Sec-2.6}
	
	\noindent The result of this subsection is an immediate corollary of section \ref{I-Sec-2.5}.
	
	\begin{thm}\label{I-Thm-2.5}
		Let $1 < p < \rfrac{6}{5}, q > 3, d = 3$; and $1 < p < \rfrac{4}{3}, q > 2, d = 2$. Consider the original N-S problem (\ref{I-1.1}). Let $y_e$ be a given equilibrium solution as guaranteed by Theorem \ref{I-Thm-1.1} for the steady state problem (\ref{I-1.2}). For a constant $\rho > 0$, let the initial condition $y_0$ in (\ref{I-1.1d}) be in $\ds \Bto$ and satisfy 
		\begin{equation}\label{I-2.25}
		\calV_{\rho} \equiv \Big\{ y_0 \in \Bto: \norm{y_0 - y_e}_{\Bto} \leq \rho \Big\}, \quad \rho > 0.
		\end{equation}
		\noindent If $\rho > 0$ is sufficiently small, then 
		\begin{enumerate}[(i)]
			\item for each $y_0 \in \calV_{\rho}$, there exists an interior finite dimensional feedback controller \label{I-Thm-2.5.i}
			\begin{equation}\label{I-2.26}
			u = F(y - y_e) = \sum_{k=1}^{K} (P_N(y - y_e), p_k)_{\omega} u_k
			\end{equation}
			\noindent that is, of the same structure as in the translated N-S $z$-problem (\ref{I-2.20}), with the same vectors $\ds p_k, u_k$ in (\ref{I-2.14}) or (\ref{I-2.16}), such that the closed loop problem corresponding to (\ref{I-1.1})
			\begin{subequations}\label{I-2.27}
				\begin{align}
				y_t - \nu \Delta y +  (y \cdot \nabla)y + \nabla \pi & = m(F(y - y_e)) + f(x)  &\text{ in } Q \label{I-2.27a}\\ 
				div \ y &= 0  &\text{ in } Q \label{I-2.27b}\\
				\begin{picture}(120,0)
				\put(-18,9){$\left\{\rule{0pt}{35pt}\right.$}\end{picture}
				y &= 0 &\text{ on } \Sigma \label{I-2.27c}\\
				y|_{t = 0} &= y_0 &\text{ in } \Omega \label{I-2.27d}
				\end{align}
			\end{subequations}
			\noindent rewritten abstractly after application of the Helmholtz projection $P_q$ as
			\begin{subequations}\label{I-2.28}
				\begin{align}
				y_t + \nu A_q y +  \calN_q y &= P_q \Big[ m \big( F(y - y_e)\big) + f(x) \Big] \label{I-2.28a}\\ 
				&= P_q \bigg[ m \bigg( \sum_{k=1}^{K} \big(P_N(y - y_e), p_k \big)_{\omega} u_k \bigg) + f(x) \bigg]\label{I-2.28b}\\ 
				y(0) &= y_0 \in \Bto \label{I-2.28d}
				\end{align}
			\end{subequations}
			\noindent has a unique solution $\ds y \in C \big([0,\infty); \Bto \big).$
			\item Moreover, such solution exponentially stabilizes the equilibrium solution $y_e$ in the space $\ds \Bto$: there exist constants $\widetilde{\gamma} > 0$ and $M_{\widetilde{\gamma}} \geq 1$ such that said solution satisfies
			\begin{equation}\label{I-2.29}
			\norm{y(t) - y_e}_{\Bto} \leq M_{\widetilde{\gamma}} e^{- \widetilde{\gamma} t}\norm{y_0 - y_e}_{\Bto}, \quad t \geq 0, \ y_0 \in \calV_{\rho}.
			\end{equation}
			\noindent Once the neighborhood $\calV_{\rho}$ is obtained to ensure the well-posedness, then the values of $M_{\widetilde{\gamma}}$ and  $\widetilde{\gamma}$ do not depend on $\calV_{\rho}$ and $\widetilde{\gamma}$ can be made arbitrarily large through a suitable selection of the feedback operator $F$.
		\end{enumerate}	
	\end{thm}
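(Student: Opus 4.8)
The plan is to obtain Theorem \ref{I-Thm-2.5} as a direct corollary of Theorems \ref{I-Thm-2.3} and \ref{I-Thm-2.4} via the change of variables $z = y - y_e$, $\chi = \pi - \pi_e$ introduced in Section \ref{I-Sec-1.8}. All of the analytic weight has already been placed on the translated feedback $z$-problem (\ref{I-2.20}) by the two cited theorems; what remains is to check that this translation transports solutions, the function-space membership, the feedback law, and the decay estimate faithfully back to the original $y$-variable. I would present the argument as a bookkeeping reduction rather than a fresh analysis.

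First I would establish the one-to-one correspondence between solutions of the closed-loop $y$-problem (\ref{I-2.27}) (abstractly (\ref{I-2.28})) and of the translated feedback $z$-problem (\ref{I-2.20}). Setting $y = z + y_e$ and $\pi = \chi + \pi_e$, one has $y_t = z_t$ since $y_e$ is stationary, while the bilinear term expands as $P_q[(y\cdot\na)y] = \calN_q z + A_{o,q} z + P_q[(y_e\cdot\na)y_e]$ and $\nu A_q y = \nu A_q z - \nu P_q\De y_e$. Substituting into the abstract $y$-equation (\ref{I-2.28a}), regrouping the $z$-terms into $-\calA_q z$ through $\calA_q = -(\nu A_q + A_{o,q})$, and invoking the steady-state identity (\ref{I-1.2a}) in its projected form $P_q[-\nu\De y_e + (y_e\cdot\na)y_e] = P_q f$ (using $P_q\na\pi_e = 0$), one finds that $y = z + y_e$ satisfies (\ref{I-2.28a}) if and only if $z$ satisfies $z_t - \calA_q z + \calN_q z = P_q(mF(z))$, i.e. exactly (\ref{I-2.20}); the feedback $u = F(y - y_e) = \sum_{k=1}^K (P_N(y-y_e),p_k)_\omega u_k$ of (\ref{I-2.26}) is then literally the feedback $F(z)$ of (\ref{I-2.20}), with the same vectors $p_k,u_k$ from Theorem \ref{I-Thm-2.1}. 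I would also record that $y_e \in \Bto$: by Theorem \ref{I-Thm-1.1}, $y_e \in W^{2,q}(\Omega) \cap W^{1,q}_0(\Omega) \cap \lso$ is solenoidal with $y_e\cdot\nu|_\Gamma = 0$ and lies in $\Bso$ via the embedding (\ref{I-1.14b}), hence $y_e \in \Bto$ as characterized in (\ref{I-1.16b}). Consequently $z_0 = y_0 - y_e$ obeys $\norm{z_0}_{\Bto} = \norm{y_0 - y_e}_{\Bto} \leq \rho$, so the smallness hypothesis (\ref{I-2.21}) of Theorem \ref{I-Thm-2.3} holds for $y_0 \in \calV_\rho$ precisely when $\rho$ is the radius of that theorem.

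For part (i), Theorem \ref{I-Thm-2.3} then yields, for $\rho$ small enough, a unique $z \in \xipqs \hookrightarrow C([0,\infty);\Bto)$ solving (\ref{I-2.20}); defining $y := z + y_e$ produces a solution of (\ref{I-2.27})--(\ref{I-2.28}) with $y - y_e \in \xipqs$, and in particular $y \in C([0,\infty);\Bto)$, since adding the fixed element $y_e \in \Bto$ does not affect the regularity class. Uniqueness is inherited through the bijection $y \leftrightarrow z = y - y_e$: two solutions $y^{(1)}, y^{(2)}$ in the translated maximal-regularity class give $z^{(i)} = y^{(i)} - y_e$ solving (\ref{I-2.20}), which coincide by the uniqueness in Theorem \ref{I-Thm-2.3}. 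For part (ii), I would substitute $z(t) = y(t) - y_e$ and $z_0 = y_0 - y_e$ into the decay estimate (\ref{I-2.24}) of Theorem \ref{I-Thm-2.4}, obtaining (\ref{I-2.29}) verbatim with the same constants $M_{\wti{\gamma}}, \wti{\gamma}$. The claim that $\wti{\gamma}$ is arbitrarily large follows by tracing the construction backward: Theorem \ref{I-Thm-2.1} allows the finite-dimensional decay rate $\gamma$ to be prescribed freely, Theorem \ref{I-Thm-2.2} transfers this to the full linearized dynamics, and Remark \ref{I-Rmk-9.1} links the nonlinear rate $\wti{\gamma}$ to the linear $\gamma_0$.

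The only point demanding genuine care — rather than a deep obstacle, since Theorems \ref{I-Thm-2.3} and \ref{I-Thm-2.4} carry the real analysis — is the algebraic verification in the second step that the untranslated full nonlinearity $\calN_q y$ recombines exactly into $\calN_q z + A_{o,q} z$ plus the steady-state residual, so that the residual is annihilated against $f$ through (\ref{I-1.2a}) and $P_q\na(\cdot)=0$. Once this identity and the membership $y_e \in \Bto$ are in place, Theorem \ref{I-Thm-2.5} follows immediately.
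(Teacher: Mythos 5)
Your proposal is correct and follows exactly the route the paper takes: the paper declares Theorem \ref{I-Thm-2.5} an immediate corollary of Section \ref{I-Sec-2.5} (Theorems \ref{I-Thm-2.3} and \ref{I-Thm-2.4}) via the translation $z = y - y_e$, $\chi = \pi - \pi_e$ of Section \ref{I-Sec-1.8}, which is precisely your reduction. Your writeup merely spells out the bookkeeping (the expansion of $\calN_q y$, the cancellation against $f$ via the projected steady-state equation, and the membership $y_e \in \Bto$) that the paper leaves implicit.
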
 

	\noindent  See Remark \ref{I-Rmk-9.1} comparing $\wti{\gamma}$ in (\ref{I-2.29}) with $\gamma_0$ in (\ref{I-2.17}).
	
	\subsection{Results on the real space setting}\label{I-Sec-2.7}
	
	\noindent Here we shall complement the results of Theorems \ref{I-Thm-2.1} through \ref{I-Thm-2.5} by giving their version in the real space setting. We shall quote from \cite{BT:2004}. In the complexified setting $\ds \lso + i \lso$
	we have that the complex unstable subspace $W^u_N$ is,
	\begin{align}
	W^u_N &= W^1_N + i W^2_N \\
	&= \text{space of generalized eigenfunctions } \{\phi_j\}_{j=1}^N \text{ of the operator } \calA_q (=\calA_q^u) \text{ corresponding to} \nonumber \\
	& \quad \text{ its } N \text{ unstable eigenvalues.}
	\end{align}
	\noindent Set $\ds \phi_j = \phi_j^1 + i \phi_j^2$ with $\phi_j^1,\phi_j^2$ real. Then:
	\begin{equation}
	W^1_N = Re \ W^u_N = \text{span} \{ \phi_j^1 \}_{j = 1}^N; \quad W^2_N = Im \ W^u_N = \text{span} \{ \phi_j^2 \}_{j = 1}^N.
	\end{equation}
	\noindent The stabilizing vectors $p_k, u_k, \ k = 1, \dots, K$ are complex valued with $u_k \in W^u_N \subset \lso$, and $p_k \in (W^u_N)^* \subset \lo{q'}$, as in (\ref{I-2.15}).\\
	
	\noindent The complex-valued uniformly stable linear $w$-system in (\ref{I-2.16}) with $K$ complex valued stabilizing vectors admits the following real-valued uniformly stable counterpart
	\begin{equation}
	\frac{dw}{dt} = \calA_q w + P_q \Bigg ( m \Bigg( \sum_{k=1}^{K} \text{Re } (w_N(t),p_k)_{\omega} \text{ Re } u_k - \sum_{k=1}^{K} \text{Im } \ (w_N(t),p_k)_{\omega} \text{ Im } u_k\Bigg) \Bigg)
	\end{equation}
	\noindent with $2K \leq N$ real stabilizing vectors, see \cite[Eq 3.52a, p 1472]{BT:2004}. If $K = \text{sup } \{ \ell_i, i = 1, \dots,M \}$ is achieved for a real eigenvalue $\lambda_i$ (respectively, a complex eigenvalue $\lambda_i$), then the \textit{effective} number of stabilizing controllers is $K \leq N$, as the generalized functions are then real, since $y_e$ is real; respectively, $2K \leq N$, for, in this case, the complex conjugate eigenvalue $\bar{\lambda}_j$ contributes an equal number of components in terms of generalized eigenfunctions $\ds \phi_{\bar{\lambda}_j} = \bar{\phi}_{\lambda_j}$. In all cases, the actual (\textit{effective}) upper bound $2K$ is $2K \leq N$. For instance, if all unstable eigenvalues were real and simple then $K=1$, and only one stabilizing controller is actually needed.\\
	
	\noindent Similarly, the complex-valued locally (near $y_e$) uniformly stable nonlinear $y$-system (\ref{I-2.28}) with $K$ complex-valued stabilizing vectors admits the following real-valued locally uniformly stable counterpart
	\begin{equation}
	\frac{dy}{dt} - \nu A_q y + \calN_q y = P_q \Bigg ( m \Bigg( \sum_{k=1}^{K} \text{Re }(y - y_e,p_k)_{\omega} \text{ Re } u_k - \sum_{k=1}^{K} \text{Im } (y - y_e,p_k)_{\omega} \text{ Im } u_k\Bigg) \Bigg)
	\end{equation} 
	\noindent with $2K \leq N$ real stabilizing vectors, see \cite[p 43]{BLT1:2006}.
	
	\section{Algebraic rank condition for the $w_N$-dynamics in (\ref{I-2.8a}) under the (preliminary) Finite-Dimensional Spectral Assumption (FDSA)}\label{I-Sec-3}
	
	\textbf{Preliminaries:} For $i = 1,\dots,M$, we now denote by $\phiij$, $\phiijs$ the normalized linearly independent eigenfunctions (on $\lso$ and $\ds (\lso)' = \lo{q'}$, respectively $\ds \rfrac{1}{q} + \rfrac{1}{q'} = 1$ invoking property (\ref{I-A.2b}) of Appendix \hyperref[I-app-A]{A} )  corresponding to the unstable distinct eigenvalues $\lambda_1,\ldots,\lambda_M$ of $\calA$ and $\bar{\lambda}_1,\ldots,\bar{\lambda}_M$ of $\calA^*$, respectively:
	
	\begin{subequations}\label{I-3.1}
		\begin{align}
		\calA \phi_{ij} &= \lambda_i \phi_{ij} \in \calD(\calA_q) = W^{2,q}(\Omega) \cap W^{1,q}_0(\Omega) \cap \lso \in L^q(\Omega)  \label{I-3.1a}\\
		\calA^* \phi_{ij}^* &= \bar{\lambda}_i \phi_{ij}^* \in \calD(\calA_q^*) = W^{2,q'}(\Omega) \cap W^{1,q'}_0(\Omega) \cap \lo{q'} \in L^{q'}(\Omega). \label{I-3.1b}
		\end{align}
	\end{subequations}

	\noindent FDSA: We henceforth assume in this section that for each of the distinct eigenvalues $\lambda_1,\ldots,\lambda_M$ of $\calA$, algebraic and geometric multiplicity coincide:
	
	\begin{equation}\label{I-3.2}
	W_{N,i}^u \equiv P_{N,i} \lso = \text{span} \phiij; \quad (W_{N,i}^u)^* \equiv P_{N,i}^* (\lso)^* = \text{span} \phiijs;
	\end{equation}
	
	\noindent Here $P_{N,i},P_{N,i}^*$ are the projections corresponding to the eigenvalues $\lambda_i$ and $\bar{\lambda}_i$, respectively. For instance, $P_{N,i}$ is given by an integral such as the one on the RHS of (\ref{I-2.3a}), where now $\Gamma$ is a closed smooth curve encircling the eigenvalue $\lambda_i$ and no other. Similarly $P_{N,i}^*$.
	The space $W_{N,i}^u = $ range of $P_{N,i}$ is the algebraic eigenspace of the eigenvalues $\lambda_i$, and $\ell_i =$ dim $W_{N,i}^u$ is the algebraic multiplicity of $\lambda_i$, so that $\ell_1 + \ell_2 + \dots +\ell_M = N$.  As a consequence of the FDSA, we obtain 
	
	\begin{equation}\label{I-3.3}
	W_N^u \equiv P_N \lso = \text{span} \phiijm; \quad (W_N^u)^* \equiv P_N^* (\lso)^* = \text{span} \phiijms.
	\end{equation}
	
	\noindent Without the FDSA, $W_N^u$ is the span of the generalized eigenfunctions of $\calA$, corresponding to its unstable distinct eigenvalues $\{\lambda_j \}_{j=1}^M$; and similarly for $(W_N^u)^*$ (see the subsequent section). In other words, the FDSA says that the restriction $\calA_N^u$ in (\ref{I-2.5}) is \textit{diagonalizable} or that $\calA_N^u$ is \textit{semisimple} on $W_N^u$ in the terminology of \cite[p 43]{TK:1966}. Under the FDSA, any vector $w \in W_N^u$ admits the following unique expansion \cite[p 12, Eq (2.16)]{TK:1966}, \cite[p 1453]{BT:2004}, in terms of the basis $\phiijm$ in $\lso$ and its adjoint basis \cite[p 12]{TK:1966} $\phiijms$ in $(\lso)^*$:
	
	\begin{equation}\label{I-3.4}
	W_N^u \ni w = \sum_{i,j}^{M,\ell_i} (w,\phi_{ij}^*)\phi_{ij}; \quad
	(\phi_{ij},\phi_{hk}^*) =
	\begin{cases}
	1 & \text{if $i=h,j=k$} \\
	0 & \text{otherwise} \\
	\end{cases}
	\end{equation}
	
	\noindent that is, the system consisting of $\{ \phi_{ij} \}$ and $\{ \phi_{ij}^* \}$, $i = 1,\ldots,M, \ j = 1,\ldots,\ell_i$, can be chosen to form bi-orthogonal sequences. Here $( \ , \ )$ denotes the scalar product between $W^u_N$ and $(W^u_N)^*$ \cite[p 12]{TK:1966}. i.e. ultimately, the duality pairing in $\Omega$ between $\lso$ and $(\lso)^*$. Next, we return to the $w_N$-dynamics in (\ref{I-2.8a}), rewritten here for convenience 
	
	\begin{equation}\label{I-3.5}
	\text{on } W_N^u: w'_N - \calA^u_N w_N = P_N P (mu); \  w_N(0) = P_N w_0.
	\end{equation}
	
	\noindent \textbf{The term $P_NP(mu)$ expressed in terms of adjoint bases.} Next, let $mu \in L^q(\omega)$ where $q > 1$. Here below we compute the RHS of the term $P_NP(mu)$ via the adjoint bases expansion in (\ref{I-3.4}), where we notice that $P^*P_N^* \phi_{ij}^* = \phi_{ij}^*$ because $\phi_{ij}^* \in \calD(\calA^*)$, so that $\phi_{ij}^*$ is invariant under the projections $P^*$ and $P_N^*$. With $\ds (f,g)_{\omega} = \int_{\omega} f \bar{g} \ d \omega$, we obtain
	
	\begin{equation}\label{I-3.6}
	W_N^u \ni P_NP(mu) = \sum_{i,j=1}^{M,\ell_i}(P_NP(mu),\phi_{ij}^*)\phi_{ij} =  \sum_{i,j=1}^{M,\ell_i}(mu,\phi_{ij}^*)\phi_{ij} = \sum_{i,j=1}^{M,\ell_i}(u,\phi_{ij}^*)_{\omega}\phi_{ij},
	\end{equation}
	
	\noindent so that the dynamics (\ref{I-3.5}) on $W^u_N$ becomes
	
	\begin{equation}\label{I-3.7}
	\text{on } W_N^u: w'_N - \calA^u_N w_N = \sum_{i,j=1}^{M,\ell_i}(u,\phi_{ij}^*)_{\omega}\phi_{ij}. \  
	\end{equation} 
	
	\noindent \textbf{Selection of the scalar interior control function $u$ in finite dimensional separated form (with respect to $K$ coordinates).} Next, we select the control $u$ of the form given in (\ref{I-2.10})
	
	\begin{equation}\label{I-3.8}
	u = \sum_{k=1}^K \mu_k(t)u_k, \quad u_k \in W^u_N \subset \lso, \quad \mu_k(t) = \text{scalar}
	\end{equation}
	
	\noindent so that the term in (\ref{I-3.6}) in $W^u_N$ specializes to 
	
	\begin{equation}\label{I-3.9}
	W_N^u \ni P_NP(mu) = \sum_{i,j=1}^{M,\ell_i} \Bigg\{ \sum_{k=1}^{K} (u_k,\phi_{ij}^*)_{\omega} \mu_k(t) \Bigg\} \phi_{ij}.\\
	\end{equation}
	
	\noindent Substituting (\ref{I-3.9}) on the RHS of (\ref{I-3.5}), we finally obtain
	
	\begin{equation}\label{I-3.10}
	\text{on } W_N^u: w'_N - \calA^u_N w_N = \sum_{i,j=1}^{M,\ell_i} \Bigg\{ \sum_{k=1}^{K} (u_k,\phi_{ij}^*)_{\omega} \mu_k(t) \Bigg\} \phi_{ij}.
	\end{equation} 
	
	\noindent \textbf{The dynamics (\ref{I-3.10}) in coordinate form on $W^u_N$.} Our next goal is to express the finite dimensional dynamics (\ref{I-3.10}) on the $N$-dimensional space $W^u_N$ in a component-wise form. To this end, we introduce the following ordered bases $\beta_i$ and $\beta$ of length $\ell_i$ and $N$ respectively:	
	\begin{equation}\label{I-3.11}
	\begin{aligned}
	\beta_i &= [ \phi_{i1},\dots,\phi_{i \ell_i} ]: \ \text{ basis on } W^u_{N,i}\\
	\beta &= \beta_1 \cup \beta_2 \cup \dots \cup \beta_M = [ \phi_{11},\dots,\phi_{1 \ell_1},\phi_{21},\dots,\phi_{2 \ell_2},\dots,\phi_{M 1},\dots,\phi_{M \ell_M} ]: \ \text{ basis on } W^u_N.
	\end{aligned}
	\end{equation}
	
	\noindent Thus, we can represent the $N$-dimensional vector $w_N \in W^u_N$ as column vector $\hat{w}_N = [w_N]_{\beta}$ as,	
	\begin{equation}\label{I-3.12}
	w_N = \sum_{i,j=1}^{M,\ell_i} w_N^{ij} \phi_{ij}; \text{ and set } \hat{w}_N = \text{col}[w_N^{1,1},\dots,w_N^{1, \ell_1},\dots,w_N^{i,1},\dots,w_N^{i, \ell_i},\dots,w_N^{M,1},\dots,w_N^{M,\ell_M}].
	\end{equation}
	
	\begin{rmk}\label{I-Rmk-3.1}
		The eigenfunction $\phi_{ij}$ belongs to $\lso$ as well as to $\ds \calD(A_q) = \calD(\calA_q)$. Thus, by real/complex interpolation, see (\ref{I-1.16})/(\ref{I-1.21}) they also belong to $\ds \lqaq$ as well as to $\ds \big[ \calD(A_q), \lso \big]_{1-\alpha} = \calD(A^{\alpha}_q), \ 0 \leq \alpha \leq 1$; in particular, $\phi_{ij} \in \Bto$. See (\ref{I-B.11}) or (\ref{I-B.12}) in Appendix \hyperref[I-app-B]{B}. Thus, exponential decay in $\BC^N$ of the $\BC^N$-vector $\hat{w}_N$ translates at once into exponential decay with the same rate in any of the spaces $\ds \lso, \lqaq, \calD(A^{\alpha}_q)$, in particular, $\ds \Bto$ for the vector $w_N$, views as a vector on any one of these spaces. This remark applies to $w_N(t)$ and $u_N(t)$ in Theorem \ref{I-Thm-2.1}, Eqts (\ref{I-2.12}), (\ref{I-2.13}) as well as Theorem \ref{I-Thm-2.2}, Eqts (\ref{I-2.17})- (\ref{I-2.19}).
	\end{rmk}  
	
	\begin{lemma}
		In $\mathbb{C}^{N}$, with respect to the ordered basis $\beta : \ \phiijm$ of normalized eigenfunctions of $\calA_N^u$, we may rewrite system (\ref{I-3.10}) = (\ref{I-3.12}) = (\ref{I-2.8a}) as
		
		\begin{equation}\label{I-3.13}
		(\hat{w}_N)' - \Lambda \hat{w}_N = U \hat{\mu}_K
		\end{equation}
		
		where 
		
		\begin{equation}\label{I-3.14}
		\Lambda = \begin{bmatrix}
		\lambda_1 I_1 & & & \text{\huge0}\\
		& \lambda_2 I_2 & & &\\
		& & \ddots & &\\
		\text{\huge0} & & & \lambda_M I_M
		\end{bmatrix} : N \times N, \quad I_i:\ell_i \times \ell_i \ \text{identity}
		\end{equation}
		
		\begin{equation}\label{I-3.15}
		U_i = 
		\begin{bmatrix}
		(u_1,\phi_{i1}^*)_{\omega} & \dots & (u_K,\phi_{i1}^*)_{\omega} \\
		(u_1,\phi_{i2}^*)_{\omega} & \dots & (u_K,\phi_{i2}^*)_{\omega} \\
		\vdots & \ddots & \vdots \\
		(u_1,\phi_{i \ell_i}^*)_{\omega} & \dots & (u_K,\phi_{i \ell_i}^*)_{\omega} \\
		\end{bmatrix}
		:\ell_i \times K; \quad U = \begin{bmatrix}
		U_1 \\
		U_2 \\
		\vdots\\
		U_M
		\end{bmatrix} :N \times K; \quad \hat{\mu}_K = \begin{bmatrix}
		\mu_1 \\
		\mu_2 \\
		\vdots\\
		\mu_K
		\end{bmatrix} : K \times 1; 
		\end{equation}
		
		\noindent where $\ds (f,g)_{\omega} = \int_{\omega} f \bar{g} \ d \omega$ and we take $K \geq \ell_i, \ i = 1, \dots, M$. Thus (\ref{I-3.13}) gives the dynamics on $W^u_N$ as a linear $N$-dimensional ordinary differential equation in coordinate form in $\BC^N$.
	\end{lemma}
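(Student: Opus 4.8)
The plan is to expand both sides of the abstract dynamics \eqref{I-3.10} in the eigenfunction basis $\beta$ of \eqref{I-3.11} and then match coefficients, using the biorthogonality \eqref{I-3.4} to guarantee uniqueness of the expansion. Concretely, I would start from the coordinate representation \eqref{I-3.12}, namely $w_N = \sum_{i,j} w_N^{ij}\phi_{ij}$, and differentiate in $t$ to obtain $w_N' = \sum_{i,j}(w_N^{ij})'\phi_{ij}$, so that the $\BC^N$-column vector of coordinates of $w_N'$ is exactly $(\hat{w}_N)'$.

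The key algebraic step is to compute $\calA^u_N w_N$ in coordinates. Under the FDSA \eqref{I-3.2} the basis $\beta$ consists of genuine eigenfunctions, so by \eqref{I-3.1a} we have $\calA^u_N \phi_{ij} = \calA\phi_{ij} = \lambda_i\phi_{ij}$, whence $\calA^u_N w_N = \sum_{i,j} \lambda_i w_N^{ij}\phi_{ij}$. Since every coordinate index $j = 1,\dots,\ell_i$ attached to a fixed $i$ carries the same eigenvalue $\lambda_i$, the coordinate action of $\calA^u_N$ is multiplication by the block-diagonal matrix $\Lambda$ in \eqref{I-3.14}, with the scalar $\lambda_i$ repeated $\ell_i$ times along the diagonal (i.e. the block $\lambda_i I_i$). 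This is precisely where the ordering conventions of $\beta$ in \eqref{I-3.11} and of $\hat{w}_N$ in \eqref{I-3.12} must be respected in order to recover the stated block structure.

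With these two computations in hand, the left-hand side of \eqref{I-3.10} reads $\sum_{i,j}\big[(w_N^{ij})' - \lambda_i w_N^{ij}\big]\phi_{ij}$, while the right-hand side is already displayed in the basis as $\sum_{i,j}\big\{\sum_{k=1}^K (u_k,\phi_{ij}^*)_\omega\,\mu_k(t)\big\}\phi_{ij}$. Matching the coefficient of each $\phi_{ij}$ — legitimate because $\beta$ is a basis, equivalently by applying the functional $(\,\cdot\,,\phi_{ij}^*)$ and invoking the biorthogonality \eqref{I-3.4} — yields the scalar identities
\[
(w_N^{ij})' - \lambda_i w_N^{ij} = \sum_{k=1}^K (u_k,\phi_{ij}^*)_\omega\,\mu_k(t), \qquad i = 1,\dots,M,\ j = 1,\dots,\ell_i.
\]
Assembling these $N$ scalar equations into a single vector equation, the terms $\lambda_i w_N^{ij}$ collect into $\Lambda\hat{w}_N$, and the right-hand side collects into $U\hat{\mu}_K$, where the row indexed by $(i,j)$ and column indexed by $k$ of $U$ is exactly $(u_k,\phi_{ij}^*)_\omega$, i.e. the block form \eqref{I-3.15}; this is precisely \eqref{I-3.13}.

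There is no genuine analytic difficulty here, since this is essentially a bookkeeping lemma; the only point requiring care is the consistent handling of the double index $(i,j)$ and the verification that the coordinate matrix of $\calA^u_N$ is the block-diagonal $\Lambda$ rather than a Jordan form. The latter is guaranteed solely by the FDSA \eqref{I-3.2}: were it dropped, one would instead obtain Jordan blocks in $\Lambda$, and the subsequent rank/controllability analysis would have to be recast — which is exactly the direction pursued in the later, non-semisimple treatment.
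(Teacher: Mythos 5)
Your proposal is correct and follows essentially the same route as the paper's proof: both arguments express the dynamics in the ordered eigenfunction basis $\beta$, observe that $\calA_N^u$ acts as the block-diagonal matrix $\Lambda$ because the $\phi_{ij}$ are genuine eigenfunctions under the FDSA, and read off from the expansion (\ref{I-3.9}) that the control term has coordinate vector $U\hat{\mu}_K$. Your explicit coefficient-matching via the biorthogonality (\ref{I-3.4}) is simply a more detailed rendering of what the paper leaves implicit.
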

	
	\begin{proof}
		\noindent Recalling the basis $\beta_i$ and the definitions of $U_i$ in (\ref{I-3.15}), we can rewrite the term in (\ref{I-3.9}) with respect to this basis as
		
		\begin{equation}\label{I-3.16}
		[P_NP(mu)]_{\beta_i} = U_i \hat{\mu}_K: \ell_i \times 1;
		\end{equation}
		
		\noindent Then with respect to the basis $\beta$ in (\ref{I-3.11}) and recalling the definition $U$ in (\ref{I-3.15}), we can rewrite the term (\ref{I-3.9}) with respect to this basis as
		
		\begin{equation}\label{I-3.17}
		[P_NP(mu)]_{\beta} = \begin{bmatrix}
		U_1 \\
		U_2 \\
		\vdots\\
		U_M
		\end{bmatrix}\hat{\mu}_K = \begin{bmatrix}
		U_1 \hat{\mu}_K \\
		U_2 \hat{\mu}_K \\
		\vdots\\
		U_M \hat{\mu}_K
		\end{bmatrix} = U \hat{\mu}_K: N \times 1.
		\end{equation} 
		
		\noindent Finally, clearly $\calA_N^u$ becomes the diagonal matrix $\Lambda$ in (\ref{I-3.14}) with respect to the basis $\beta$, recalling its eigenvalues in (\ref{I-3.1}).	
	\end{proof}
	
	\noindent The following is the main result of the present section.
	
	\begin{thm}\label{I-Thm-3.1}
		Assume the FDSA. It is possible to select vectors $u_1,\dots,u_K \in \ls(\omega), q > 1, K = \sup \ \{\ell_i:i = 1,\dots,M \}$, such that the matrix $U_i$ of size $\ell_i \times K$ in (\ref{I-3.15}) satisfies		
		\begin{equation}\label{I-3.18}
		rank [U_i] = full = \ell_i \text{ or } 
		\text{ rank}
		\begin{bmatrix}
		(u_1,\phi_{i1}^*)_{\omega} & \dots & (u_K,\phi_{i1}^*)_{\omega} \\
		(u_1,\phi_{i2}^*)_{\omega} & \dots & (u_K,\phi_{i2}^*)_{\omega} \\
		\vdots &  & \vdots\\
		(u_1,\phi_{i \ell_i}^*)_{\omega}& \dots & (u_K,\phi_{i \ell_i}^*)_{\omega} \\
		\end{bmatrix}
		= \ell_i; \ \ell_i \times K \ \text{for each } i = 1,\dots,M.
		\end{equation}		
	\end{thm}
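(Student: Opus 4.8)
The plan is to recast the full-row-rank requirement on each $U_i$ as a separation statement about the adjoint eigenfunctions restricted to $\omega$, and then to produce the vectors $u_1,\dots,u_K$ by a duality/genericity argument. First I would record the algebraic reformulation. A vector $c = (c_1,\dots,c_{\ell_i}) \in \BC^{\ell_i}$ lies in the left null space of $U_i$ precisely when $\sum_{j=1}^{\ell_i} c_j (u_k,\phi_{ij}^*)_\omega = 0$ for every $k = 1,\dots,K$, i.e. when the single function $\phi_c^* := \sum_{j=1}^{\ell_i} \bar c_j \, \phi_{ij}^* \in \text{span}\,\phiijs$ satisfies $(u_k,\phi_c^*)_\omega = 0$ for all $k$. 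Hence $\text{rank}[U_i] = \ell_i$ is equivalent to the separation property that no nonzero element of the adjoint eigenspace $E_i^* := \text{span}\,\phiijs$ is $\omega$-orthogonal to all of $u_1,\dots,u_K$. The task is thus to choose finitely many controllers $u_k \in \ls(\omega)$ that simultaneously separate every $E_i^*$, $i=1,\dots,M$.

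The key structural input, call it Step 1, is that the restrictions $\{\phi_{ij}^*|_\omega\}_{j=1}^{\ell_i}$ remain linearly independent in $L^{q'}(\omega)$, and more precisely that no nonzero $\phi_c^* \in E_i^*$ is annihilated over $\omega$ by all solenoidal test fields. Each such $\phi_c^*$ is a nonzero eigenfunction of $\calA^*$ for the eigenvalue $\bar\lambda_i$ by (\ref{I-3.1b}), hence solves the associated adjoint Oseen eigenvalue system, an elliptic problem with smooth coefficients. If $\phi_c^*$ vanished on the open set $\omega$, the unique continuation property for this over-determined Stokes/Oseen eigenproblem (of the type recalled in Remark \ref{I-Rmk-1.4}, see \cite{RT:2009}, \cite{RT:2008}) would force $\phi_c^* \equiv 0$ on $\Omega$, contradicting the linear independence of $\phiijs$ asserted in (\ref{I-3.1b}); this gives independence of the restrictions. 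The slightly stronger nondegeneracy, namely that $\phi_c^*|_\omega$ is not orthogonal over $\omega$ to the whole solenoidal space $\ls(\omega)$, equivalently that $\phi_c^*|_\omega$ is not a gradient field on $\omega$, is obtained from the same unique-continuation circle of ideas applied to the pressure-coupled system satisfied by $\phi_c^*$.

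Granting Step 1, the construction of the $u_k$ (Step 2) is routine finite-dimensional functional analysis. For a single index $i$, nondegeneracy means the $\ell_i$ functionals $v \mapsto (v,\phi_{ij}^*)_\omega$ are linearly independent on $\ls(\omega)$; by the duality between $\ls(\omega) = L^q_\sigma(\omega)$ and $L^{q'}_\sigma(\omega)$ one may then pick $u_1,\dots,u_{\ell_i} \in \ls(\omega)$ biorthogonal to $\phi_{i1}^*,\dots,\phi_{i\ell_i}^*$ over $\omega$, which makes the leading $\ell_i \times \ell_i$ block of $U_i$ the identity and hence $U_i$ of full rank $\ell_i$. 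To treat all $M$ blocks at once with the single pool $u_1,\dots,u_K$, $K = \sup\{\ell_i\}$, I would argue by genericity: for each $i$ the set of tuples $(u_1,\dots,u_K) \in (\ls(\omega))^K$ for which $\text{rank}[U_i] < \ell_i$ is, by Step 1, a proper (closed, lower-dimensional, meager) subset, since for every fixed projective direction $\phi_c^* \in E_i^*$ there is at least one admissible $u$ with $(u,\phi_c^*)_\omega \neq 0$; the finite union over $i$ is therefore still proper, and any tuple in its dense complement does the job.

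The main obstacle is squarely Step 1: the unique continuation and nondegeneracy of the adjoint eigenfunctions on the arbitrarily small interior set $\omega$. The purely algebraic reformulation and the genericity selection in Step 2 are elementary, and the dimensional bookkeeping $K = \sup_i \ell_i$ is automatic once the separation is available. It is precisely the PDE fact that an Oseen adjoint eigenfunction cannot collapse on an open subset, neither vanishing there nor reducing to a pure gradient, that makes localized interior controls effective, and this is the one place where the solenoidal/pressure structure and a genuine unique continuation theorem must be invoked rather than soft functional-analytic arguments.
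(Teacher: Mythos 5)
Your overall strategy coincides with the paper's: recast (\ref{I-3.18}) as the statement that no nonzero $\phi_c^* = \sum_{j} \bar c_j \phi_{ij}^* \in \mathrm{span}\,\phiijs$ is $\omega$-orthogonal to all admissible controllers, establish a nondegeneracy of the adjoint eigenfunctions on $\omega$ via the unique continuation theorem for the over-determined adjoint Oseen eigenproblem (\ref{I-3.22}) (\cite{RT:2009}, \cite[Lemma 3.7]{BT:2004}), and then produce $u_1,\dots,u_K$ by finite-dimensional duality; your genericity step for handling all $i=1,\dots,M$ at once is a correct elaboration of the paper's ``infinitely many choices.'' The genuine gap is the second half of your Step 1. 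Because you insist that the controllers lie in the solenoidal space $\ls(\omega)$, whose annihilator over $\omega$ is the gradient space $G^{q'}(\omega)$, your Step 2 needs precisely the assertion that no nonzero $\phi_c^*$ restricts on $\omega$ to a gradient field --- and this does \emph{not} follow from ``the same unique-continuation circle of ideas.'' The cited results give only the vanishing statement: $\phi^* \equiv 0$ on $\omega$, together with the over-determined conditions in (\ref{I-3.22}), forces $\phi^* \equiv 0$ in $\Omega$. The gradient statement is structurally different: if $\phi_c^* = \nabla h$ on $\omega$, then $\text{div}\, \phi_c^* = 0$ makes $h$ harmonic there, so $\Delta \nabla h = \nabla \Delta h = 0$, and the eigen-equation is satisfied on $\omega$ with the pressure absorbing the right-hand side (exactly so in the Stokes case $L_e = 0$, where $p^* = \bar\lambda_i h + \text{const}$ works, and in general whenever $(L_e)^*\nabla h$ is itself a gradient). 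Hence no contradiction can be extracted from the PDE on $\omega$ alone; any proof must use global information, i.e., a genuinely different unique continuation theorem than the one you invoke --- for the Oseen operator the vorticity does not even satisfy a closed elliptic equation to which scalar unique continuation could be applied. As written, this load-bearing claim is unproven, and it is not in the literature you cite.

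The paper never needs that claim, and your argument can be repaired the same way. The paper's Claim is exactly your independence-of-restrictions statement (no nonzero $\phi_c^*$ vanishes identically on $\omega$), and the controllers are chosen so that only this vanishing case ever arises. Concretely, either: (i) drop the solenoidality constraint on the $u_k$ --- the feedback enters the dynamics only through $P_q(mu)$, and the identity $(P_N P_q(mu),\phi_{ij}^*) = (u,\phi_{ij}^*)_\omega$ in (\ref{I-3.6}) is valid for every $u \in L^q(\omega)$, so ordinary $L^q(\omega)$--$L^{q'}(\omega)$ duality plus independence of the restrictions already yields biorthogonal $u_1,\dots,u_K$ and hence (\ref{I-3.18}); or (ii) keep the $u_k$ globally solenoidal, $u_k \in \lso$, as in the paper's conclusion; then $(u,\phi_c^*)_\omega = 0$ for all $u \in \lso$ means $m\phi_c^* = \nabla h \in G^{q'}(\Omega)$, and since $\nabla h \equiv 0$ on $\Omega\setminus\overline{\omega}$ (taken connected), $h$ is constant there, so $h$ is harmonic in $\omega$ with constant trace on $\partial\omega$, hence constant in $\omega$, i.e., $\phi_c^* \equiv 0$ on $\omega$ --- and the vanishing unique continuation concludes. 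Either fix collapses your ``stronger nondegeneracy'' to the independence statement you did prove, which is the actual content of the paper's proof.
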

	
	\begin{proof}
		\textit{Step 1}. By selection, see (\ref{I-3.1}) and statement preceding it, the set of vectors $\phi^*_{i 1}, \dots, \phi^*_{i \ell_i}$ is linearly independent in $\lo{q'}$, $q'$  is the H\"{o}lder conjugate of $\ds q, \rfrac{1}{q} + \rfrac{1}{q'} = 1 $, for each $i = 1, \dots, M$. Next, if the set of vectors  $\{\phi_{i1}^*,\dots,\phi_{i \ell_i}^*\}$ were linearly independent in $\lo{q'}$, $i = 1, \dots, M$, the desired conclusion (\ref{I-3.18}) for the matrix $U_i$ to be full rank, would follow for infinitely many choices of the vectors  $u_1,\dots,u_K \in \lso$.\\
		
		\noindent \textit{Claim:} The set $\{\phi_{i1}^*,\dots,\phi_{i \ell_i}^*\}$ is linearly independent on $L^{q'}_{\sigma}(\omega)$, for each $i = 1, \dots, M$.\\

		\noindent The proof will critically depend on a unique continuation result \cite{RT:2009} see also \cite[Lemma 3.7 p1466]{BT:2004}. By contradiction, let us assume that the vectors $\{\phi_{i1}^*,\dots,\phi_{i \ell_i}^*\} \in \lo{q'}$ are instead linearly dependent, so that 
		
		\begin{equation}\label{I-3.19}
		\phi_{i \ell_i}^* = \sum_{j=1}^{\ell_i - 1} \alpha_j \phi_{i \ell_j}^* \text{ in } \lo{q'}
		\end{equation}
		
		\noindent with constants $\alpha_j$ not all zero. We shall then conclude by \cite[Lemma 3.7]{BT:2004} and \cite{RT:2009} below, that in fact $\phi_{i \ell_i}^* \equiv 0$ on all of $\Omega$ as well, thereby making the system $\{ \phi_{ij}^*, j = 1,\dots,\ell_i \}$ linearly dependent on $\Omega$, a contradiction. To this end, define the following function (depending on $i$) in $\lo{q'}$
		
		\begin{equation}\label{I-3.20}
		\phi^* = \Bigg[ \sum_{j=1}^{\ell_i - 1} \alpha_j \phi_{i \ell_j}^* - \phi_{i \ell_i}^* \Biggl] \in \lo{q'}, \quad i = 1,\dots,M.
		\end{equation}

		\noindent As each $\phi_{ij}^*$ is an eigenvalue of $\calA^*$ (or $(\calA_N^u)^*$) corresponding to the eigenvalue $\bar{\lambda}_i$, see (\ref{I-3.1}), so is the linear combination $\phi^*$. This property, along with (\ref{I-3.19}) yields that $\phi^*$ satisfies the following eigenvalue problem for the operator $\calA^*$ (or $(\calA_N^u)^*$):
		
		\begin{equation}\label{I-3.21}
		\calA^* \phi^* = \bar{\lambda} \phi^*, \text{ div } \phi^* = 0 \text{ in } \Omega; \quad \phi^* = 0 \text{ in } \omega, \text{ by (\ref{I-3.19})}.
		\end{equation}
		
		But the linear combination $\phi^*$ in (\ref{I-3.20}) of the eigenfunctions $\phi_{ij}^* \in \calD(\calA^*)$ satisfies itself the Dirichlet B.C $\ds \left. \phi^* \right \rvert_{\partial \Omega} = 0$. Thus the explicit PDE version of problem (\ref{I-3.21}) is
		
		\begin{subequations}\label{I-3.22}
			\begin{align}
			- \nu \Delta \phi^* - (L_e)^* \phi^* + \nabla p^* &= \bar{\lambda}_i \phi^* \text{ in } \Omega;\\
			\begin{picture}(0,0)
			\put(-100,0){ $\left\{\rule{0pt}{30pt}\right.$}\end{picture}
			\text{div } \phi^* &= 0 \text{ in } \Omega;\\
			\phi^* \rvert_{\partial \Omega} = 0;\quad \phi^* &= 0 \text{ in } \omega;
			\end{align}
		\end{subequations}
		
		\begin{equation}\label{I-3.23}
		\phi^* \in \calD(\calA^*); \quad (L_e)^* \phi^* = (y_e.\nabla)\phi^* + (\phi^*. \nabla)^* y_e,
		\end{equation}
		
		\noindent where $(f.\nabla)^*y_e$ is a $d$-vector whose $i$\textsuperscript{th} component is $\displaystyle \sum_{j=1}^{d} (D_i y_{e_j})f_j$ \cite[p 55]{BLT1:2006}.\\
		
		\noindent \textit{Step 2}. The critical point is now that the over-determined problem (\ref{I-3.22}) implies the following unique continuation result:		
		\begin{equation}\label{I-3.24}
		\phi^* = 0 \text{ in } \lo{q'}; \ \text{or by (\ref{I-3.20})} \ \phi_{i \ell_i}^* = \alpha_1\phi_{i1}^* + \alpha_2\phi_{i2}^* + \dots + \alpha_{\ell_i - 1}\phi_{i \ell_i - 1}^* \text{ in } \lo{q'},
		\end{equation}
		
		\noindent i.e. the set $\{ \phi_{i1}^*,\dots,\phi_{i \ell_i}^* \}$ in linearly dependent on $\lo{q'}$. But this is false, by the very selection of such eigenvectors, see (\ref{I-3.1}) and statement preceding it. Thus, the condition (\ref{I-3.24}) cannot hold.\\
		
		\noindent The required unique continuation result is established in \cite[Lemma 3.7]{BT:2004} or \cite{RT:2009}. The original proof is done in the Hilbert setting but we may invoke the same result because $\phi^*$ has more regularity and integrability than required since $\phi^*$ is an eigenfunction of $\calA^*$. Thus the \textit{claim} is established. In conclusion: it is possible to select, in infinitely many ways, interior functions $u_1,\dots,u_K \in \lso$ such that the algebraic full rank condition (\ref{I-3.18}) holds true for each $i = 1,\dots,M$.
	\end{proof}
	
	\section{Algebraic rank conditions for the dynamics $w_N$ in (\ref{I-2.8a}) in the general case} 
	\label{I-Sec-4}
	In the present section we dispense with the FDSA (\ref{I-3.2}). More precisely, we shall obtain Theorem \ref{I-2.1} without assuming the FDSA (\ref{I-3.2}). Thus now	
	\begin{equation*}
	\begin{aligned}
	W^u_N = &\text{ space of generalized eigenfunctions of $\calA_q (= \calA^u_N)$}\\
	&\text{ corresponding to its distinct unstable eigenvalues.} 
	\end{aligned}\tag{4.0} \label{I-4.0}
	\end{equation*}
	\noindent \textit{Warning}: In this section we shall denote by $\ell_i$ the geometric multiplicity of the eigenvalue $\lambda_i$ and by $N_i$ its algebraic multiplicity.\\
	
	\noindent \textit{Step 1:} To treat this computationally more complicated case we shall, essentially invoke the classical result on controllability of a finite-dimensional, time-invariant system $\{\BA,\BB\}, \BA:N \times N, \BB: N \times p$ where $\BA$ is given in Jordan form $J$. Let again $\lambda_1,\lambda_2,\dots,\lambda_M$ be the distinct eigenvalues of $\BA = J$. Let $\BA_i$ denote all the Jordan blocks associated with the eigenvalue $\lambda_i$; let $\ell_i$ be the number of Jordan blocks of $\BA$ (i.e the number of linearly independent eigenvectors associated with the eigenvalue $\lambda_i$). Let $\BA_{ij}$ be $j \textsuperscript{th}$ Jordan block in $\BA_i$ corresponding to a Jordan cycle of length $N_j^i$. That is:
	
	\begin{equation}\label{I-4.1}
	\BA = \text{diag} \{ \BA_1,\BA_2,\dots,\BA_M \};\quad \BA_i = \text{diag} \{ \BA_{i1},\BA_{i2},\dots,\BA_{i \ell_i} \}.
	\end{equation}
	
	Partition the matrix $\BB$ accordingly:
	
	\begin{equation}\label{I-4.2}
	\underset{(N \times N)}{\BA}
	= \begin{bmatrix}
	\BA_1 & & & \text{\huge0}\\
	& \BA_2 & & &\\
	& & \ddots & &\\
	\text{\huge0} & & & \BA_M 
	\end{bmatrix};
	\quad
	\underset{(N \times p)}{\BB}
	= \begin{bmatrix}
	\BB_1 \\ \BB_2 \\ \vdots \\ \BB_M
	\end{bmatrix}
	\end{equation}\\	
	\begin{equation}\label{I-4.3}
	\underset{(N_i \times N_i)}{\BA_i}
	= \begin{bmatrix}
	\BA_{i1} & & & \text{\huge0}\\
	& \BA_{i2} & & &\\
	& & \ddots & &\\
	\text{\huge0} & & & \BA_{i \ell_i} 
	\end{bmatrix};
	\quad
	\underset{(N_i \times p)}{\BB_i}
	= \begin{bmatrix}
	\BB_{i1} \\ \BB_{i2} \\ \vdots \\ \BB_{i \ell_i} 
	\end{bmatrix}
	\end{equation}\\	
	\begin{equation}\label{I-4.4}
	\underset{(N_j^i \times N_j^i)}{\BA_{ij}}
	= \begin{bmatrix}
	\lambda_i & 1 &  & \text{\huge0}\\
	& \lambda_i & 1 &\\
	& & \ddots & 1\\
	\text{\huge0} & & & \lambda_i 
	\end{bmatrix};
	\quad
	\underset{(N_j^i \times p)}{\BB_{ij}}
	= \begin{bmatrix}
	b_{1ij} \\ b_{2ij} \\ \vdots \\ b_{Lij}
	\end{bmatrix}.
	\end{equation}\\
	
	\noindent If $E_{\lambda_i}$ and $K_{\lambda_i}$ denote the eigenspace and the generalized eigenspace associated with the eigenvalue $\lambda_i, \ i = 1,\dots,M$, then dim $E_{\lambda_i} = \ell_i = \# $ of Jordan blocks in $\BA_i$, dim $K_{\lambda_i} = N_i, N_j^i = $ length of j\textsuperscript{th}-cycle associated with $\lambda_i; \ j = 1,\dots,\ell_i$. We have dim $\displaystyle W_N^u = N = \sum_{i = 1}^{M} N_i = \sum_{i = 1}^{M} \sum_{j = 1}^{\ell_i} N_j^i $. In (\ref{I-4.4}), the last row of $\BB_{ij}$ is denoted by $b_{Lij}$. The following result is classical \cite[ p 165]{CTC:1984}.
	
	\begin{thm}\cite[Theorem 3.1]{LT1:2015}\label{I-Thm-4.1}
		The pair $\{J,\BB\}$, $J:N \times N$, Jordan form, $\BB: N \times p$ is controllable if and only if, for each $i = 1,\dots,M$ (that is for each distinct eigenvalue) the rows of the $\ell_i \times p$ matrix constructed with all ``last" rows $b_{Li1},\dots,b_{Li \ell_i}$
		
		\begin{equation}\label{I-bli}
		\BB_i^L
		= \begin{bmatrix}
		\BB_{Li1} \\ \BB_{Li2} \\ \vdots \\ \BB_{Li \ell_i}
		\end{bmatrix}: \ell_i \times p
		\end{equation}
		
		\noindent are linearly independent (on the field of complex numbers). [A direct proof uses Hautus criterion for controllability \cite{CTC:1984}.]
	\end{thm}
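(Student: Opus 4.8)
The plan is to reduce the controllability of the pair $\{J,\BB\}$ to a spectral (eigenvector) test via the Hautus--Popov--Belevitch (PBH) criterion --- as anticipated in the bracketed remark of the statement --- and then to exploit the explicit Jordan structure to see that only the ``last'' rows $b_{Lij}$ survive. Recall that $\{J,\BB\}$ is controllable if and only if $\text{rank}\,[\lambda I - J \mid \BB] = N$ for every eigenvalue $\lambda$ of $J$; dually, if and only if there is no nonzero row vector $v \in \BC^N$ satisfying $vJ = \lambda v$ together with $v\BB = 0$. Since $J$ is block diagonal by (\ref{I-4.1}) and the eigenvalues $\lambda_1,\dots,\lambda_M$ are distinct, a left eigenvector of $J$ for $\lambda_i$ is supported only on the blocks $\BA_{ij}$, $j=1,\dots,\ell_i$, belonging to $\lambda_i$; consequently the test decouples across the $M$ distinct eigenvalues, and it suffices to analyze one $\lambda_i$ at a time.

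First I would determine the left eigenspace of $J$ associated with a fixed $\lambda_i$. For a single Jordan block in (\ref{I-4.4}), write $\BA_{ij} = \lambda_i I + N$ where $N$ is the nilpotent shift carrying the superdiagonal of $1$'s. A row vector $v$ satisfies $v\BA_{ij} = \lambda_i v$ precisely when $vN = 0$; a direct computation shows that $vN = 0$ forces every entry of $v$ except the last to vanish, so the left eigenspace of $\BA_{ij}$ is one-dimensional and spanned by the last coordinate vector. Assembling the blocks, the left eigenspace of $J$ for $\lambda_i$ is therefore exactly $\ell_i$-dimensional --- matching the number of Jordan blocks, i.e. the geometric multiplicity --- and is spanned by the $\ell_i$ vectors $v_{i1},\dots,v_{i\ell_i}$, where $v_{ij}$ places the last coordinate vector of $\BA_{ij}$ in the slot of that block and is zero elsewhere.

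The crucial step is the evaluation of $v\BB$ on such eigenvectors. Because each generator $v_{ij}$ selects, within the corresponding block $\BB_{ij}$ of $\BB$ in (\ref{I-4.4}), exactly its bottom row $b_{Lij}$, a general left eigenvector $v = \sum_{j=1}^{\ell_i} c_j v_{ij}$ for $\lambda_i$ produces $v\BB = \sum_{j=1}^{\ell_i} c_j\, b_{Lij}$ --- precisely a linear combination of the rows assembled in $\BB_i^L$ of (\ref{I-bli}). Hence the PBH requirement that no nonzero $v$ in this eigenspace annihilate $\BB$ is equivalent to the implication ``$\sum_j c_j\, b_{Lij} = 0 \Rightarrow c_1 = \dots = c_{\ell_i} = 0$'', that is, to the linear independence over $\BC$ of the rows $b_{Li1},\dots,b_{Li\ell_i}$ of $\BB_i^L$. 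Running this equivalence over each distinct eigenvalue $\lambda_i$, $i=1,\dots,M$, yields the asserted characterization.

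I expect the main (though modest) obstacle to be the bookkeeping in identifying the left eigenvectors of $J$ and verifying that they isolate exactly the last rows $b_{Lij}$ of $\BB$; once this is pinned down, the equivalence follows at once from the PBH test. The intermediate rows of the Jordan blocks play no role here because they correspond to generalized, rather than genuine, left eigenvectors, and the Hautus criterion detects only genuine eigenvectors --- which is precisely the structural reason why controllability is governed by the geometric multiplicity $\ell_i$ and by the last rows alone.
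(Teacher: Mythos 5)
Your proof is correct: the left eigenspace computation for each Jordan block (only the last coordinate survives $vN=0$), the decoupling across distinct eigenvalues, and the identification $v\BB=\sum_j c_j b_{Lij}$ together give exactly the PBH equivalence with linear independence of the rows of $\BB_i^L$. This is precisely the ``direct proof via the Hautus criterion'' that the paper's bracketed remark points to --- the paper itself only cites \cite{CTC:1984} and \cite{LT1:2015} rather than writing the argument out --- so your proposal matches the intended route and fills in its details correctly.
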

	
	\noindent We next apply the above Theorem \ref{I-Thm-4.1} to the $w_N$-problem (\ref{I-2.8a}) and (\ref{I-3.5}). To this end, we select a Jordan basis $\beta_i$ for the operator $(\calA_N^u)_i$ on $W^u_{N,i}$ given by
	
	\noindent \textbf{Jordan Basis:} 
	\begin{subequations}\label{I-4.6}
		\begin{equation}\label{I-4.6.a}
		\beta_i = \Big\{ e^1_1(\lambda_i), e^1_2(\lambda_i),\dots,e^1_{N^i_1}(\lambda_i) \vdots e^2_1(\lambda_i), e^2_2(\lambda_i),\dots,e^2_{N^i_2}(\lambda_i) \vdots \dots \vdots e^{\ell_i}_1(\lambda_i), e^{\ell_i}_2(\lambda_i),\dots,e^{\ell_i}_{N^i_{\ell_i}}(\lambda_i) \Big\}.
		\end{equation}
		
		\noindent Here the first vectors of each cycle: $ e^1_1(\lambda_i), e^2_1(\lambda_i), \dots, e^{\ell_i}_1(\lambda_i)$ are eigenvectors of $(\calA_N^u)_i$ corresponding to the eigenvalue $\lambda_i$, while the remaining vectors in $\beta_i$ are corresponding generalized eigenvectors. Thus, in the notation (\ref{I-3.1}), we have
		
		\begin{equation}\label{I-4.6.b}
		\phi_{i1} = e^1_1(\lambda_i); \ \phi_{i2} = e^2_1(\lambda_i);  \dots ; \ \phi_{i \ell_i} = e^{\ell_i}_1(\lambda_i).
		\end{equation}
	\end{subequations}
	
	\noindent Next, we can choose a bi-orthogonal basis $\beta^*_i$ of $((\calA^u_N)^*)_i$ corresponding to its eigenvalue $\bar{\lambda_i}$ given by
	
	\noindent \textbf{Bi-orthogonal Basis:}
	\begin{subequations}\label{I-4.7}
		\begin{equation}\label{I-4.7a}
		\beta^*_i = \Big\{ {\Phi}^1_1(\bar{\lambda}_i), {\Phi}^1_2(\bar{\lambda}_i),\dots,{\Phi}^1_{N^i_1}(\bar{\lambda}_i) \vdots {\Phi}^2_1(\bar{\lambda}_i), {\Phi}^2_2(\bar{\lambda}_i),\dots,{\Phi}^2_{N^i_2}(\bar{\lambda}_i) \vdots \dots \vdots {\Phi}^{\ell_i}_1(\bar{\lambda}_i), {\Phi}^{\ell_i}_2(\bar{\lambda}_i),\dots,{\Phi}^{\ell_i}_{N^i_{\ell_i}}(\bar{\lambda}_i) \Big\}.
		\end{equation}
		
		\noindent Thus, in the notation (\ref{I-3.1}), we have
		
		\begin{equation}\label{I-4.7b}
		\phi^*_{i1} = {\Phi}^1_1(\bar{\lambda}_i); \ \phi^*_{i2} = {\Phi}^2_1(\bar{\lambda}_i);  \dots ; \ \phi^*_{i \ell_i} = {\Phi}^{\ell_i}_1(\bar{\lambda}_i).
		\end{equation}
	\end{subequations}
	
	In the bi-orthogonality relationship between the vectors in (\ref{I-4.6}) and those in (\ref{I-4.7}), the first eigenvector $e^1_1(\lambda_i)$ of the first cycle in $\beta_i$ is associated with the last generalized eigenvector $\Phi^1_{N^i_1}(\bar{\lambda}_i)$ of the first cycle in $\beta^*_i$; etc, the last generalized eigenvector $e^1_{N^i_1}(\lambda_i)$ of the first cycle in $\beta_i$ is associated with the first eigenvector $\Phi^1_1(\bar{\lambda}_i)$ of the first cycle in $\beta^*_i$; etc.	
	\begin{center}
		\begin{figure}[h]
			\begin{equation}
			\begin{tikzpicture}[<->,node distance=2cm, thick ]
			
			\node (11) {$e_1^1(\lambda_i)$};
			\node (12) [right of=11] {$e_2^1(\lambda_i)$};
			\node (13) [right of=12] {};
			\node (14) [right of=13] {$\cdots$};
			\node (15) [right of=14] {};
			\node (16) [right of=15] {$e_{N_1^i}^1(\lambda_i)$};
			\node (21) [below of=11] {$\Phi_1^1(\bar{\lambda}_i)$};
			\node (22) [right of=21] {$\Phi_2^1(\bar{\lambda}_i)$};
			\node (23) [right of=22] {};
			\node (24) [right of=23] {$\cdots$};
			\node (25) [right of=24] {};
			\node (26) [right of=25] {$\Phi_{N_1^i}^1(\bar{\lambda}_i)$};
			\node[text width=3cm] (all) at (13,-1) {, $\cdots$};	
			\path[every node/.style={font=\sffamily\small}]
			(11) edge node {} (26)
			(12) edge node {} (25)
			(21) edge node {} (16)
			(22) edge node {} (15);	
			\end{tikzpicture}
			\end{equation}
			\caption{Relation between the generalized eigenvectors of $\calA^u_N$ and $(\calA^u_N)^*$}
		\end{figure}
	\end{center}	
	\noindent Thus, if $f\in W_{N,i}^u$, the following expression holds true:
	\begin{eqnarray}\label{I-4.9} f = (f,\Phi_{N_{1}^i}^1(\bar{\lambda}_i))e_1^1(\lambda_i)+\cdots+ (f,\Phi_1^1(\bar{\lambda}_i))e^1_{N_1^i}(\lambda_i)&
	\nonumber \\[2mm]
	+ \cdots + (f,\Phi_{N^i_{\ell_{i}}}^{\ell_i}(\bar{\lambda}_i))
	e_1^{\ell_i}(\lambda_i)&+ \cdots + (f,\Phi_1^{\ell_i}(\bar{\lambda}_i))e^{\ell_i}_{N_{\ell_i}^i}(\lambda_i).
	\end{eqnarray}
	
	\noindent This expansion is the counterpart of $\ds \sum_{j=1}^{\ell_i}(w,\phi_{ij}^*)\phi_{ij} \in W^u_{N,i}$ in (\ref{I-3.4}) under the FDSA. Next, we apply (\ref{I-4.9}) to $f = P_NP(mu)$. More specifically, we shall write the vector representation of $P_NP(mu)$ with respect to the basis $\beta_i$ in (\ref{I-4.6.a}), and moreover, in line with Theorem \ref{I-Thm-4.1}, we shall explicitly note only the coordinates corresponding to the vectors $ e^1_{N^i_1}(\lambda_i), e^2_{N^i_2}(\lambda_i), \dots, e^{\ell_i}_{N^i_{\ell_i}}(\lambda_i)$, each being the last vector of each cycle in (\ref{I-4.6.a}).
	
	\begin{equation}\label{I-4.10}
	[P_N P(mu)]_{\beta_i} = 
	\begin{bmatrix}
	\times \times \times\\
	\Big(u,\Phi_1^1(\bar{\lambda}_i) \Big)_{\omega}\\
	\hdotsfor{1}\\
	\times \times \times\\
	\Big(u,\Phi_1^2(\bar{\lambda}_i) \Big)_{\omega}\\
	\hdotsfor{1}\\
	\times \times \times\\
	\Big(u,\Phi_1^{\ell_i}(\bar{\lambda}_i) \Big)_{\omega}\\
	\end{bmatrix}
	\begin{aligned}\\
	\xleftarrow{} \text{ last row of the $1^{st}$ cycle }\\
	\\ \\ 
	\xleftarrow{} \text{ last row of the $2^{nd}$ cycle }\\
	\\ \\ 
	\xleftarrow{} \text{ last row of the $\ell_i^{th}$ cycle }
	\end{aligned}
	\end{equation}
	
	The symbol $\times \times \times$ corresponds to terms which we do not care about. In fact, to exemplify, since $P^* P_N^* \Phi^1_1(\bar{\lambda}_i) = \Phi^1_1(\bar{\lambda}_i)$ see above (\ref{I-3.6})
	
	\begin{equation}\label{I-4.11}
	\Big(P_N P(mu), \Phi^1_1(\bar{\lambda}_i) \Big)_{\Omega} = \big(mu, \Phi^1_1(\bar{\lambda}_i) \big)_{\Omega} = \big(u, \Phi^1_1(\bar{\lambda}_i) \big)_{\omega}.
	\end{equation}
	
	This is the relevant counterpart of expansion $ \ds P_N P(mu) = \sum_{i,j = 1}^{M, \ell_i} (u, \Phi_{ij}^*)_{\omega}\Phi_{ij}$ in (\ref{I-3.6}) under the FDSA. Notice that (\ref{I-4.10}) involves only the eigenvectors $\Phi^1_1(\bar{\lambda}_i), \Phi^2_1(\bar{\lambda}_i), \dots, \Phi^{\ell_i}_1(\bar{\lambda}_i)$ of $(\calA^u_N)^*$ corresponding to the eigenvalue $\bar{\lambda}_i$. Next, recalling (\ref{I-2.10}): $ \ds u = \sum_{k=1}^{K} \mu_k(t) u_k$, we obtain that the corresponding counterpart of (\ref{I-3.15}) is
	
	\begin{equation}\label{I-4.12}
	U_i = 
	\begin{bmatrix}
	\times & \times & & \times\\
	(u_1,\Phi_1^1)_{\omega} & (u_2,\Phi_1^1)_{\omega} & \dots & (u_K,\Phi_1^1)_{\omega} \\
	\hdotsfor{4}\\
	\times & \times & & \times\\
	(u_1,\Phi_1^2)_{\omega} & (u_2,\Phi_1^2)_{\omega} & \dots & (u_K,\Phi_1^2)_{\omega} \\
	\hdotsfor{4}\\
	\times & \times & & \times\\
	(u_1,\Phi_1^{\ell_i})_{\omega} & (u_2,\Phi_1^{\ell_i})_{\omega} & \dots & (u_K,\Phi_1^{\ell_i})_{\omega} \\
	\end{bmatrix}
	\begin{aligned}
	\\
	\xleftarrow{} \text{ row } b_{Li1}(u)\\
	\\
	\xleftarrow{} \text{ row } b_{Li2}(u)\\
	\\
	\xleftarrow{} \text{ row } b_{Li \ell_i}(u)
	\end{aligned}
	\end{equation}
	
	\noindent Again, the relevant rows exhibited in (\ref{I-4.12}) correspond to the last rows of each Jordan sub-block $\{ \BA_{i1},\BA_{i2},\dots,\BA_{i \ell_i} \}$ in (\ref{I-4.3}). In (\ref{I-4.12}) we have displayed only such relevant rows: $b_{L i1},b_{L i2},\dots, b_{L i \ell_i}$ According to Theorem \ref{I-Thm-4.1}, the test for controllability as applied to the system (\ref{I-3.5}), i.e to the pair $\{ \calA^u_N,B \} , B $ = col $[B_1,B_2,\dots,B_M]$, is
	
	\begin{equation}\label{I-4.13}
	\text{ rank } 
	\begin{bmatrix}
	\vspace{2mm} \text{ row } b_{Li1} \text{ of } B_i\\
	\vspace{2mm} \text{ row } b_{Li2} \text{ of } B_i\\
	\vspace{2mm} \vdots\\
	\vspace{2mm} \text{ row } b_{Li \ell_i} \text{ of } B_i
	\end{bmatrix}
	= \text{ rank }
	\begin{bmatrix}
	\vspace{2mm}
	\big(u_1,\Phi_1^1 \big(\bar{\lambda}_i \big) \big)_{\omega} & \dots & \big(u_K,\Phi_1^1 \big(\bar{\lambda}_i \big) \big)_{\omega} \\
	\vspace{2mm}
	\big(u_1,\Phi_1^2 \big(\bar{\lambda}_i \big) \big)_{\omega} & \dots & \big(u_K,\Phi_1^2 \big(\bar{\lambda}_i \big) \big)_{\omega} \\
	\vspace{2mm} \vdots &  & \vdots\\ \vspace{2mm}
	\big(u_1,\Phi_1^{\ell_i} \big(\bar{\lambda}_i \big) \big)_{\omega}& \dots & \big(u_K,\Phi_1^{\ell_i} \big(\bar{\lambda}_i \big) \big)_{\omega} \\
	\end{bmatrix}
	= \ell_i
	\end{equation}
	
	\noindent $i = 1, \dots, M$. But this is exactly the test obtained in (\ref{I-3.18}) via the identification in (\ref{I-4.7b}):
	\begin{equation}\label{I-4.14}
	\phi^*_{i1} = \Phi^1_1(\bar{\lambda}_i), \phi^*_{i2} = \Phi^2_1(\bar{\lambda}_i), \dots, \phi^*_{i \ell_i} = \Phi^{\ell_i}_1(\bar{\lambda}_i)
	\end{equation}
	\noindent involving only eigenvectors, not generalized eigenvectors. Thus the remainder of the proof in section 3 past (\ref{I-3.18}) applies and shows Theorem \ref{I-Thm-3.1} without the FDSA. We have 
	
	\begin{thm}\label{I-Thm-4.2}
	With reference to $U_i$ in (\ref{I-4.12}), it is possible to select interior vectors  $u_1,\dots,u_K \in W^u_N \subset \lso,\ K = \sup \ \{\ell_i:i = 1,\dots,M \}$, such that the algebraic conditions (\ref{I-4.13}) hold true, $i = 1,\dots,M$.
	\end{thm}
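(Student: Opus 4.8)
The plan is to reduce the assertion entirely to the semisimple case already settled in Theorem \ref{I-Thm-3.1}, exploiting the structural fact established in the derivation of (\ref{I-4.13}): namely that the controllability test in the general (Jordan) case involves \emph{only} the true adjoint eigenvectors $\Phi_1^1(\bar{\lambda}_i), \dots, \Phi_1^{\ell_i}(\bar{\lambda}_i)$ of $(\calA_N^u)^*$, and none of the genuinely generalized eigenvectors. Thus, despite the computationally heavier Jordan bookkeeping, the selection problem for the $u_k$ is formally identical to the one already solved.

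First I would record the precise demand of (\ref{I-4.13}): for each fixed distinct eigenvalue $\lambda_i$, the $\ell_i \times K$ matrix with $(j,k)$-entry $(u_k, \Phi_1^j(\bar{\lambda}_i))_{\omega}$ must have full row rank $\ell_i$. By the identification (\ref{I-4.14}), i.e. $\phi_{i1}^* = \Phi_1^1(\bar{\lambda}_i), \dots, \phi_{i\ell_i}^* = \Phi_1^{\ell_i}(\bar{\lambda}_i)$, this is literally the same matrix $U_i$ and the same rank condition (\ref{I-3.18}) analyzed under the FDSA. The only thing left to verify is that the vectors $\Phi_1^1(\bar{\lambda}_i), \dots, \Phi_1^{\ell_i}(\bar{\lambda}_i)$ — which are precisely the leading vectors of the $\ell_i$ Jordan cycles of $((\calA_N^u)^*)_i$, hence honest eigenvectors satisfying $\calA^* \Phi_1^j(\bar{\lambda}_i) = \bar{\lambda}_i \Phi_1^j(\bar{\lambda}_i)$ — remain linearly independent once restricted to $\omega$.

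The decisive step is therefore the very unique continuation argument carried out in the \emph{Claim} inside the proof of Theorem \ref{I-Thm-3.1}. Suppose, for contradiction, that $\{\Phi_1^1(\bar{\lambda}_i), \dots, \Phi_1^{\ell_i}(\bar{\lambda}_i)\}$ were linearly dependent on $L^{q'}_{\sigma}(\omega)$. Then a nontrivial combination $\phi^*$ of them would be an eigenfunction of $\calA^*$ for the eigenvalue $\bar{\lambda}_i$, with $\phi^*|_{\omega} = 0$, $\text{div}\,\phi^* = 0$, and $\phi^*|_{\partial\Omega} = 0$ — exactly the overdetermined problem (\ref{I-3.22}). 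The unique continuation result \cite[Lemma 3.7]{BT:2004}, \cite{RT:2009} then forces $\phi^* \equiv 0$ on all of $\Omega$, contradicting the linear independence on $\Omega$ of the true eigenvectors $\{\phi_{ij}^*\}_{j=1}^{\ell_i}$ fixed in (\ref{I-3.1}). Hence no such dependence on $\omega$ holds, so the $\ell_i$ functionals $u \mapsto (u, \Phi_1^j(\bar{\lambda}_i))_{\omega}$ on $\lso$ are linearly independent; consequently the associated map into $\BC^{\ell_i}$ is surjective, and one may select $u_1, \dots, u_K \in W^u_N \subset \lso$ (in infinitely many ways, with $K = \sup_i \ell_i \geq \ell_i$) so that (\ref{I-4.13}) holds for every $i = 1, \dots, M$.

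I do not anticipate a genuine obstacle here, since the substantive work lies in assembling the last-row structure (\ref{I-4.10})--(\ref{I-4.13}) and in the unique continuation lemma, both already in hand. The one conceptual subtlety worth flagging is the bi-orthogonality pairing displayed in Figure 1: it is precisely because the \emph{last} generalized eigenvector of each primal cycle pairs with the \emph{first} (true) eigenvector $\Phi_1^j(\bar{\lambda}_i)$ of the corresponding dual cycle that the ``last-row'' Hautus entries of Theorem \ref{I-Thm-4.1} collapse onto the true adjoint eigenvectors alone. Confirming that this pairing is arranged correctly — so that generalized eigenvectors of $(\calA_N^u)^*$ never appear in (\ref{I-4.13}) — is the only place demanding care, and it is exactly what legitimizes transporting the remainder of the Section \ref{I-Sec-3} proof past (\ref{I-3.18}) verbatim.
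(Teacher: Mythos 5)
Your proposal is correct and follows essentially the same route as the paper: it uses the structural fact (established in (\ref{I-4.6})--(\ref{I-4.14}) via the bi-orthogonal pairing of Jordan cycles) that the last-row controllability entries of Theorem \ref{I-Thm-4.1} involve only the true adjoint eigenvectors $\Phi_1^j(\bar{\lambda}_i)$, identifies the resulting rank condition (\ref{I-4.13}) with (\ref{I-3.18}) through (\ref{I-4.14}), and then invokes the same unique-continuation contradiction argument from the Claim in the proof of Theorem \ref{I-Thm-3.1} to get linear independence of these eigenvectors on $\omega$, hence admissible choices of $u_1,\dots,u_K$. This is precisely the paper's argument, which concludes by noting that ``the remainder of the proof in Section \ref{I-Sec-3} past (\ref{I-3.18}) applies.''
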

	
	\noindent We close this section by writing down the counterpart of the expansion (\ref{I-3.10}) for the $w_N$-dynamics in terms of the basis $\ds \beta = \beta_1 \cup \beta_2 \cup \dots \cup \beta_M$, see (\ref{I-3.11}), (\ref{I-4.6.a}), (\ref{I-4.9}) of the generalized eigenvectors in the present general case.
	\begin{eqnarray}\label{I-4.15}
	\text{on} && W_N^u : w'_N - \calA^u_N w_N \nonumber \\
	&&= \sum_{i=1}^{M} \Bigg\{ \sum_{k=1}^{K} \bigg[(u_k,\Phi_{N^i_1}^1 (\bar{\lambda}_i))_{\omega} \mu_k(t) \bigg] e^1_1(\lambda_i) + \dots + \dots + \sum_{k=1}^{K} \bigg[(u_k,\Phi_1^1 (\bar{\lambda}_i))_{\omega} \mu_k(t) \bigg] e^1_{N^i_1}(\lambda_i) \nonumber\\
	&& + \dots + \dots \nonumber \\
	&&+ \sum_{k=1}^{K} \bigg[(u_k,\Phi_{N^i_{\ell_i}}^{\ell_i} (\bar{\lambda}_i))_{\omega} \mu_k(t) \bigg] e^{\ell_i}_1(\lambda_i) + \dots + \dots + \sum_{k=1}^{K} \bigg[(u_k,\Phi^{\ell_i}_1 (\bar{\lambda}_i))_{\omega} \mu_k(t) \bigg] e_{N^i_{\ell_i}}^{\ell_i}(\lambda_i) \Bigg \}. \nonumber\\
	\end{eqnarray} 	
	\section{Proof of Theorem \ref{I-2.1}:  arbitrary decay rate of the $w_N$-dynamics (\ref{I-3.5}) or (\ref{I-4.15}) (or (\ref{I-3.13}) under the FDSA) by a suitable finite-dimensional interior localized feedback control u}\label{I-Sec-5}
	
	We are now in a position to obtain Theorem \ref{I-2.1}, which we restate for convenience. Let $1 < q < \infty.$
	
	\begin{thm}\label{I-Thm-5.1}
		Let $\lambda_1,\dots,\lambda_M$ be the unstable distinct eigenvalues of $\calA$ and let $\omega$ be an arbitrarily small open portion of the interior with smooth boundary $\partial \omega$. By virtue of Theorem \ref{I-4.2}, pick interior vectors $[u_1,\dots,u_K]$ in $W^u_N \subset \lso$ such that the rank conditions (\ref{I-4.13}) hold true, with $K = \sup \ \{\ell_i: \ i = 1,\dots,M \}$ (respectively, Theorem \ref{I-Thm-3.1} and the (same) rank conditions (\ref{I-3.18}) under FDSA).\\
		
		\noindent Then: Given $\gamma > 0$ arbitrarily large, there exists a $K$-dimensional interior controller $u = u_N$ acting on $\omega$, of the form given by (\ref{I-3.8}), with the vectors $u_k$ given by Theorem \ref{I-Thm-4.2} via the rank conditions (\ref{I-4.13}), such that, once inserted in (\ref{I-4.15}) yield the estimate
		\begin{subequations}\label{I-5.1}
			\begin{equation}\label{I-5.1a}
			\norm{w_N(t)}_{\lso} + \norm{u_N(t)}_{\ls(\omega)} \leq C_{\gamma} e^{- \gamma t} \norm{P_N w_0}_{\lso}, \ t \geq 0,
			\end{equation}
			\noindent where the $\lso$-norm in (\ref{I-5.1a}) may be replaced by the $\ds \lqaq$- norm , $1 < p, q < \infty$; in particular the $\ds \Bto$-norm, $\ds 1 < q < \infty,\ 1 < p < \frac{2q}{2q-1}$:	
			\begin{equation}\label{I-5.1b}
			\norm{w_N(t)}_{\Bto} + \norm{u_N(t)}_{\Bto} \leq C_{\gamma} e^{- \gamma t} \norm{P_N w_0}_{\Bto}, \ t \geq 0.
			\end{equation}
			
		\end{subequations}
		\noindent Here, $w_N$ is the solution of (\ref{I-4.15}) (respectively (\ref{I-3.10}) under the FDSA) , i.e., (\ref{I-3.5}) corresponding to the control $u = u_N$ in (\ref{I-3.8}). Moreover, such controller $u = u_N$can be chosen in feedback form: that is, with reference to the explicit expression (\ref{I-3.8}) for $u$, of the form $\mu_k(t) = (w_N(t),p_k)_{\omega}$ for suitable vectors $p_k \in (W^u_N)^* \subset \lo{q'}$ depending on $\gamma$. In conclusion, $w_N$ in (\ref{I-5.1}) is the solution of the equation on $W^u_N$ (see (\ref{I-3.5})) specialized as (\ref{I-4.15})
		\begin{equation}\label{I-5.2}
		w'_N - \calA^u_N w_N = P_N P \Bigg ( m \Bigg( \sum_{k=1}^{K} (w_N(t),p_k)_{\omega} u_k \Bigg) \Bigg), \ u_k \in W^u_N \subset \lso, \ p_k \in (W^u_N)^* \subset \lo{q'}
		\end{equation}		
		\noindent rewritten as	
		\begin{equation}\label{I-5.3}
		w'_N = \bar{A}^u w_N, \quad w_N(t) = e^{\bar{A}^u t}P_N w_0, \quad w_N(0) = P_N w_0.
		\end{equation}
	\end{thm}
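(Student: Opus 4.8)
The plan is to exploit the fact that the $w_N$-dynamics evolves on the finite-dimensional unstable subspace $W^u_N$, so that, after the coordinate reduction carried out in the Lemma preceding (\ref{I-3.13}), the stabilization problem collapses to a pole-assignment problem for a pair $\{\calA^u_N, B\}$ in $\BC^N$. Here $B$ is the operator $\hat{\mu}_K \mapsto [P_N P(m\sum_k \mu_k u_k)]_{\beta}$, whose matrix representation is the block matrix $U$ of (\ref{I-3.15}) (respectively (\ref{I-4.12})) assembled from the selected interior vectors $u_1,\dots,u_K$. The first thing I would record is that this pair is \emph{controllable}: writing $\calA^u_N$ in Jordan form, Theorem \ref{I-Thm-4.1} reduces controllability to the rank conditions (\ref{I-4.13}) (respectively (\ref{I-3.18}) under the FDSA), and these have already been secured in Theorem \ref{I-Thm-4.2} (respectively Theorem \ref{I-Thm-3.1}) through the choice of $u_1,\dots,u_K$, the only analytically nontrivial ingredient being the unique continuation property underlying the linear independence of the adjoint eigenfunctions on $\omega$.

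With controllability in hand, I would invoke the classical finite-dimensional pole-assignment theorem: given $\gamma>0$ arbitrarily large, there exists a gain matrix $\hat{F}:\BC^N \to \BC^K$ such that the closed-loop matrix $\calA^u_N + B\hat{F}$ has all its eigenvalues in the half-plane $\{\Re \lambda < -\gamma\}$. Setting $\hat{\mu}_K = \hat{F}\hat{w}_N$ then yields $\hat{w}_N(t) = e^{(\calA^u_N + B\hat{F})t}\hat{w}_N(0)$, hence $\norm{\hat{w}_N(t)}_{\BC^N} \leq C_{\gamma} e^{-\gamma t}\norm{\hat{w}_N(0)}_{\BC^N}$; since $\hat{\mu}_K$ depends linearly and boundedly on $\hat{w}_N$, the control coordinates inherit the same exponential decay rate.

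The step I expect to cost the most care is recasting the abstract gain $\hat{F}$ into the prescribed feedback form $\mu_k(t) = (w_N(t),p_k)_{\omega}$ with $p_k \in (W^u_N)^* \subset \lo{q'}$, as required in (\ref{I-5.2})--(\ref{I-5.3}). Each component $(\hat{F}\hat{w}_N)_k$ is a linear functional of the coordinates $\hat{w}_N$, hence of $w_N$; to represent it as an $\omega$-localized pairing I would use that the functionals $w_N \mapsto (w_N,\phi_{ij}^*)_{\omega}$ span $(W^u_N)^*$, equivalently that the associated $\omega$-Gram matrix is nonsingular. This is again underpinned by the unique continuation / linear independence on $\omega$ already exploited in Theorem \ref{I-Thm-3.1}. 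Solving the resulting nonsingular linear system for the coefficients produces vectors $p_k = \sum_{ij} c^k_{ij}\phi_{ij}^*$, so that the closed loop becomes $w'_N = \bar{A}^u w_N$ with $\bar{A}^u = \calA^u_N + B\hat{F}$, exactly as in (\ref{I-5.3}).

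Finally, I would transfer the $\BC^N$-decay to every norm claimed in (\ref{I-5.1a})--(\ref{I-5.1b}). Because $W^u_N$ is finite-dimensional and each eigenfunction $\phi_{ij}$ lies in $\calD(A_q)$, hence in $\lso$, in $\lqaq$, in $\calD(A_q^{\alpha})$, and in particular in $\Bto$ by Remark \ref{I-Rmk-3.1}, all of these norms are mutually equivalent on $W^u_N$. Thus the exponential decay of $\hat{w}_N$ in $\BC^N$ upgrades verbatim to decay of $w_N$ in any of these norms. The matching bound on $u_N(t) = \sum_{k} \mu_k(t)u_k$ then follows immediately from $\abs{\mu_k(t)} = \abs{(w_N(t),p_k)_{\omega}} \leq C\norm{w_N(t)}$ together with the fixed vectors $u_k$, completing the estimate.
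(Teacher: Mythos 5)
Your proposal follows the same route as the paper's proof: controllability of the coordinate pair $\{J,U\}$ is read off from the rank conditions (\ref{I-4.13}) (resp.\ (\ref{I-3.18})) already secured in Theorem \ref{I-Thm-4.2} (resp.\ Theorem \ref{I-Thm-3.1}), via the Jordan-form criterion of Theorem \ref{I-Thm-4.1}; pole assignment (the paper invokes Popov's criterion) then produces a gain with closed-loop spectrum in $\{\Re \lambda < -\gamma\}$, hence the $\BC^N$-decay (\ref{I-5.6}); and your final step --- upgrading the decay of $\hat{w}_N$ and of the control coordinates to the $\lso$-, $\lqaq$- and $\Bto$-norms because $W^u_N$ is finite dimensional and spanned by (generalized) eigenfunctions lying in $\calD(A_q)$ --- is exactly the paper's use of Remark \ref{I-Rmk-3.1}. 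All of that is correct and matches the paper.

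The genuine gap is in the step you yourself single out: recasting the gain as $\mu_k(t)=(w_N(t),p_k)_{\omega}$ with $p_k\in (W^u_N)^*$. You reduce this to the nonsingularity of the \emph{mixed} matrix $\big[(\phi_{hl},\phi^*_{ij})_{\omega}\big]$, which pairs $W^u_N$ against $(W^u_N)^*$ over $\omega$, and you assert that this is ``underpinned by'' the unique-continuation/linear-independence result of Theorem \ref{I-Thm-3.1}. That implication does not hold: unique continuation yields linear independence of the restrictions $\phi^*_{ij}\big|_{\omega}$, which makes the \emph{self}-Gram matrix $\big[(\phi^*_{ij},\phi^*_{hl})_{\omega}\big]$ nonsingular, but it says nothing about a pairing between two \emph{different} families. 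Already for $N=1$, nothing in Theorem \ref{I-Thm-3.1} prevents $(\phi_{11},\phi^*_{11})_{\omega}=0$ (the product integrates to $1$ over $\Omega$ by (\ref{I-3.4}), but its integral over a small $\omega$ could vanish), in which case every feedback of your proposed form is identically zero on $W^u_N$ and cannot stabilize. The paper's proof avoids this issue entirely: it realizes the gain through the biorthogonality (\ref{I-3.4}) (resp.\ (\ref{I-4.9}) in the general case) over $\Omega$ --- each coordinate functional is $w_N \mapsto (w_N,\phi^*_{ij})$, where $(\,\cdot\,,\cdot\,)$ is the unsubscripted duality pairing on $\Omega$ introduced below (\ref{I-3.4}) --- so that every row of the pole-placement gain equals $(w_N,p_k)$ for a suitable $p_k\in(W^u_N)^*$, with no matrix to invert and no nondegeneracy on $\omega$ required; this is what the paper means by ``returning from $\BC^N\times\BC^N$ back to $W^u_N\times(W^u_N)^*$''. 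If you insist on sensing genuinely localized on $\omega$, as the notation in (\ref{I-5.2}) suggests, you must prove nondegeneracy of the mixed $\omega$-pairing between $W^u_N$ and $(W^u_N)^*$ as a separate lemma; it is not a consequence of Theorem \ref{I-Thm-3.1}, and your proof is incomplete without it.
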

	
	\begin{proof}
		\noindent \underline{Step 1}: Following \cite{LT1:2015} the proof consists in testing controllability of the linear, finite-dimensional system (\ref{I-3.5}), in short, the pair 
		\begin{equation}\label{I-5.4}
		\{J,B\}, \ B = U: \ N \times K, K = \text{sup } \{\ell_i; i = 1.\dots, M \}
		\end{equation}
		$U = [U_1, \dots, U_M]^{\text{tr}}$, $U_i$ given by (\ref{I-4.12}) (or by (\ref{I-3.15}) under FDSA). $J$ is the Jordan form of $\calA^u_N$ with respect to the Jordan basis $\beta = \beta_1 \cup \dots \cup \beta_M, \ \beta_i$ being given by (\ref{I-4.6.a}). But the rank conditions (\ref{I-4.13}) precisely asserts such controllability property of the pair $\{\calA^u_N = J, B\}$, in light of Theorem \ref{I-4.1}.\\
		
		\noindent \underline{Step 2}: Having established the controllability criterion for the pair $\{\calA^u_N = J, B\}$ then by the well-known Popov's criterion in finite-dimensional theory, there exists a real feedback matrix $Q = K \times N$, such that the spectrum of the matrix $(J + BQ) = (J + UQ)$ may be arbitrarily preassigned; in particular, to lie in the left half-plane $\{ \lambda: \text{Re } \lambda < - \gamma < - \text{Re } \lambda_{N+1} \}$, as desired. The resulting closed-loop system
		
		\begin{equation}\label{I-5.5}
		(\hat{w}'_N) - J \hat{w}_N = U u_N,
		\end{equation}
		
		\noindent is obtained with $\mathbb{C}^N$-vector $u_N = Q \hat{w}_N$, Q being the $K \times N$ matrix with row vectors $[\hat{p}_1,\dots,\hat{p}_K],\ \mu^k_N = (\hat{w}_N,\hat{p}_k)$ in the ${\mathbb{C}}^N$-inner product and hence decays with exponential rate
		
		\begin{equation}\label{I-5.6}
		\abs{\hat{w}_N(t)}_{\BC^N} \leq C_{\gamma} e^{-\gamma t} \abs{\hat{w}_N(0)}_{\BC^N}, \ t \geq 0.
		\end{equation}
		
		\noindent But the $N$-dimensional vector $\ds w_N \in W^u_N \subset \lso$ is represented by the $\BC^N$-vector $\ds \hat{w}_N = [w_N]_{\beta}$, where in the general case of Section \ref{I-Sec-4}, $\beta$ is a Jordan basis of generalized eigenfunctions of $\ds \calA_q(=\calA_N^u)$ corresponding to its $M$ distinct unstable eigenvalues. Such basis is given by $\ds \beta = \beta_1 \cup \beta_2 \cup \cdots \cup \beta_M$, where a representative $\beta_i$ is given in (\ref{I-4.6.a}). The whole basis can be read off from (\ref{I-4.15}). In the special case of Section \ref{I-Sec-3} where the FDSA holds, the basis $\beta$ in $W^u_N$ is given by the eigenfunctions of the $\calA_N^u$ corresponding to its $M$ distinct eigenvalues, see (\ref{I-3.11}). But such eigenfunctions/generalized eigenfunctions are in $\calD(\calA_q)$, hence smooth. Thus, the exponential decay in (\ref{I-5.6}) of the coordinate vector $\hat{w}_N$ in $\BC^N$ translates in same exponential decay of the vector $w_N(t) \in W^u_N$ not only in the $\lso$-norm but also in the $\calD(\calA_q) = \calD(A_q)$-norm, hence in the $\ds \lqaq$-norm, in particular in the $\ds \Bto$-norm. See also Remark \ref{I-Rmk-3.1}. Thus, returning from ${\mathbb{C}}^N \times {\mathbb{C}}^N$ back to $W^u_N \times (W^u_N)^*$, there exist suitable $p_1,\dots,p_K \in (W^u_N)^* \subset \lo{q'}$, such that $\mu^k_N = (w_k,p_k)$, whereby the closed-loop system (\ref{I-5.2}) corresponds precisely to (\ref{I-4.15}) via $P_N P (mu)$ written in terms of the Jordan basis of eigenvectors $\beta$ in (\ref{I-4.6.a}).
		
		\noindent Thus not only we obtain in view of (\ref{I-5.2}), (\ref{I-5.3}) and (\ref{I-5.6}) 
		\begin{equation}\label{I-5.7}
		\norm{w_N(t)}_{\lso} = \norm{e^{\bar{A}^u t}P_N w_0}_{\lso} \leq C_{\gamma} e^{-\gamma t} \norm{P_N w_0}_{\lso}, \ t \geq 0,
		\end{equation}
		\noindent but also, say $\ds 1 < q < \infty, 1 < p < \frac{2q}{2q - 1}$
		\begin{equation}\label{I-5.8}
		\norm{w_N(t)}_{\Bto} = \norm{e^{\bar{A}^u t}P_N w_0}_{\Bto} \leq C_{\gamma} e^{-\gamma t} \norm{P_N w_0}_{\Bto}, \ t \geq 0.
		\end{equation}
		\noindent Hence with $u_N = Q w_N$, we obtain not only
		\begin{align}
		\norm{w_N(t)}_{\lso} + \norm{u_N(t)}_{\ls(\omega)} &= \norm{w_N(t)}_{\lso} + \norm{Qw_N(t)}_{\lso} \label{I-5.9}\\
		&\leq \big(\abs{Q} + 1 \big) \norm{e^{\bar{A}^u t}P_N w_0}_{\lso} \leq C_{\gamma} e^{- \gamma t} \norm{P_N w_0}_{\lso} \label{I-5.10}
		\end{align}
		\noindent but also, say
		\begin{equation}\label{I-5.11}
		\norm{w_N(t)}_{\Bto} + \norm{u_N(t)}_{\Bto} \leq C_{\gamma} e^{-\gamma t}\norm{P_N w_0}_{\Bto}, \ t \geq 0.
		\end{equation}
	\end{proof}
	
	\begin{rmk}\label{I-Rmk-5.1}
		Under the FDSA, checking controllability of the system (\ref{I-3.13}) is easier. To this end, we can pursue, as usual, two strategies.\\
		
		\noindent A first strategy invokes the well-known Kalman controllability criterion by constructing the $N \times KN$ Kalman controllability matrix
		
		\begin{equation}\label{I-5.12}
		\mathcal{K} = [B, \Lambda B,\Lambda^2 B,\dots,\Lambda^{N-1} B] =
		\begin{bmatrix}
		B_1 & J_1 B_1 & \dots & J_1^{N-1}B_1 \\
		B_2 & J_2 B_2 & \dots & J_2^{N-1}B_2 \\
		\hdotsfor{4}\\
		B_M & J_M B_M & \dots & J_M^{N-1}B_M 
		\end{bmatrix},
		\end{equation}\\
		\begin{equation}\label{I-5.13}
		B = \text{col }[B_1,B_2,\dots,B_M], \quad B_i = U_i: \ell_i \times \ell_i
		\end{equation}
		
		\noindent of size $N \times KN, \ N = \text{dim } W^u_N, \ J_i = \lambda_i I_i: \ell_i \times \ell_i, \ B_i = U_i: \ell_i \times \ell_i$, and requiring that it be full rank.
		
		\begin{equation}\label{I-5.14}
		\text{rank } \mathcal{K} = \text{full} = N.
		\end{equation}
		
		\noindent In view of generalized Vandermond determinants, we then have 
		\begin{equation}\label{I-5.15}
		\text{rank } \calK = N \quad \text{if and only if rank } U_i = \ell_i \text{ (full) } i = 1,\dots,M, 
		\end{equation}
		\noindent precisely as guaranteed by (\ref{I-3.18}). A second strategy invokes the Hautus controllability criterion:
		\begin{equation}\label{I-5.16}
		rank [\Lambda - \lambda_i I, B] = rank [\Lambda - \lambda_i I, U] = N \ (\text{full})
		\end{equation} 
		for all unstable eigenvalues $\lambda_i, 1, \dots, M$, yielding again the condition that rank $[U_i] = \ell_i, 1, \dots, M$, as generated by (\ref{I-3.18}).
	\end{rmk}
	
	\section{Proof of Theorem \ref{I-Thm-2.2}: Feedback stabilization of the original linearized $w$-Oseen system (\ref{I-1.13}) by a finite dimensional feedback controller}\label{I-Sec-6}
	
	\noindent The main result on the feedback stabilization of the linearized $w$-system (\ref{I-1.13}) = (\ref{I-2.1}) by a finite dimensional controller is Theorem \ref{I-Thm-2.2}, here reformulated in part for convenience in the context of the development of the present proof. Throughout this section $1 < q < \infty$.
	
	\begin{thm}\label{I-Thm-6.1}
		Let the Oseen operator $\calA$ have N possibly repeated unstable eigenvalues $\{ \lambda_j \}_{j = 1}^N$ of which $M$ are distinct. Let $\varepsilon > 0$ and set $\gamma_0 = |Re \ \lambda_{N+1}| - \varepsilon$. Consider the setting of Theorem \ref{I-Thm-5.1} so that, in particular, the feedback finite-dimensional control $u = u_N$ is given by $\ds u = u_N = \sum_{k = 1}^{K} (w_N(t),p_k)u_k$ and satisfies estimates (\ref{I-5.1}) with $\gamma > 0$ arbitrarily large, for vectors $p_1,\dots,p_k \in (W^u_N)^* \subset \lo{q'}$ and vectors $u_1,\dots,u_k \in W^u_N \subset \lso$ given by Theorem \ref{I-Thm-5.1}. Thus, the linearized problem (\ref{I-2.1}) specializes to (\ref{I-2.16})		
		\begin{equation}\label{I-6.1}
		\frac{dw}{dt} = \calA w + P \Bigg ( m \Bigg( \sum_{k=1}^{K} (w_N(t),p_k)_{\omega} u_k \Bigg) \Bigg) \equiv \BA_{_F} w.
		\end{equation}
		\noindent Here $\ds \BA_{_F} = \BA_{_{F,q}}$ is the generator of a s.c. analytic semigroup on either the space $\ds \lso, \ 1 < q < \infty$, or on the space $\ds \lqaq, \ 1 < p,q < \infty,$ in particular on the space $\ds \Bto, 1 < q, 1 < p < \rfrac{2q}{2q-1}$. Moreover, such dynamics $w$ (equivalently, generator $\ds \BA_{_F}$) in (\ref{I-6.1}) is uniformly stable in each of these spaces, say
		\begin{equation}\label{I-6.2}
		\norm{e^{\BA_{_F}t}w_0}_{\lso} = \norm{w(t,w_0)}_{\lso} \leq C_{\gamma_0} e^{-\gamma_0 t} \norm{w_0}_{\lso}, \quad t \geq 0.
		\end{equation}
		\noindent or say
		\begin{equation}\label{I-6.3}
		\norm{e^{\BA_{_F}t}w_0}_{\Bto} = \norm{w(t,w_0)}_{\Bto} \leq C_{\gamma_0} e^{-\gamma_0 t} \norm{w_0}_{\Bto}, \quad t \geq 0.
		\end{equation}
	\end{thm}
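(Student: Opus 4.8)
The plan is to combine the spectral decomposition $\lso = W^u_N \oplus W^s_N$ of (\ref{I-2.4}) with the \emph{cascade} structure that the feedback in (\ref{I-6.1}) induces. Writing $w = w_N + \zeta_N$, $w_N = P_N w$, $\zeta_N = (I - P_N)w$ as in (\ref{I-2.7}), and applying the commuting projections $P_N$, $I - P_N$ to (\ref{I-6.1}), the decisive observation is that the feedback term depends on $w$ \emph{only through} $w_N = P_N w$. Hence the $P_N$-component closes into the autonomous finite-dimensional system
\begin{equation*}
w_N' - \calA^u_N w_N = P_N P\Big( m \sum_{k=1}^K (w_N, p_k)_\omega u_k \Big),
\end{equation*}
which is precisely (\ref{I-5.2}). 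Theorem \ref{I-Thm-5.1} then supplies, for the preassigned $\gamma > \gamma_0$, the decay (\ref{I-5.1a}) in $\lso$ and, through Remark \ref{I-Rmk-3.1}, the \emph{same} rate $\gamma$ in the $\lqaq$- and $\Bto$-norms, cf. (\ref{I-5.1b}).

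For the generation statement I would write $\BA_{_F} = \calA + F$, where $F w = P\big( m \sum_k (w_N, p_k)_\omega u_k \big)$ and each functional $w \mapsto (P_N w, p_k)_\omega$ is continuous because $P_N$ is bounded. Thus $F$ is a \emph{bounded, finite-rank} operator, and since $\calA$ generates a s.c. analytic semigroup on $\lso$, $\lqaq$ and $\Bto$ by Theorem \ref{I-Thm-1.4} and the discussion of Section \ref{I-Sec-1.10d}, the standard bounded-perturbation theorem for analytic generators gives that $\BA_{_F}$ generates a s.c. analytic semigroup on each of these spaces --- \emph{provided} $F$ is bounded there as well.

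For the uniform stability I would treat the $(I - P_N)$-component as a \emph{forced} linear equation on the stable subspace,
\begin{equation*}
\zeta_N' - \calA^s_N \zeta_N = (I - P_N) P\Big( m \sum_{k=1}^K (w_N, p_k)_\omega u_k \Big), \qquad \zeta_N(0) = (I - P_N) w_0.
\end{equation*}
The restriction $\calA^s_N$ has spectrum $\{\lambda_j\}_{j \geq N+1}$ with $\sup_j \Re \lambda_j = \Re \lambda_{N+1} < 0$; being the generator of an analytic semigroup it satisfies the spectrum-determined growth condition, so $\|e^{\calA^s_N t}\|_{\calL(X)} \leq C_\varepsilon e^{-\gamma_0 t}$ for $X \in \{\lso, \lqaq, \Bto\}$. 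The forcing is the image under a fixed bounded operator of the $\BC^N$-vector $\hat w_N(t)$, hence decays at the faster rate $\gamma > \gamma_0$ in every $X$-norm by Remark \ref{I-Rmk-3.1}. Inserting this into the variation-of-parameters formula and using $\int_0^t e^{-\gamma_0(t-s)} e^{-\gamma s}\,ds \leq C_{\gamma,\gamma_0} e^{-\gamma_0 t}$ --- valid precisely because $\gamma > \gamma_0$ --- gives $\|\zeta_N(t)\|_X \leq C e^{-\gamma_0 t}\|w_0\|_X$. Adding the two components and invoking boundedness of $P_N$ on $X$ yields (\ref{I-6.2}) and (\ref{I-6.3}).

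The main obstacle is lifting analyticity and stability from the base space $\lso$ to the low-regularity Besov space $\Bto$. The crux is the boundedness $F \in \calL(\Bto)$, which demands $P(m u_k) \in \Bto$: here $m u_k$ is a smooth field truncated by the characteristic function of $\omega$, whose Besov smoothness $2 - \rfrac{2}{p}$ must lie below the threshold $\rfrac{1}{q}$ at which a characteristic function still belongs to $B^{s}_{q,p}$ --- which is exactly the admissible range $1 < p < \rfrac{2q}{2q-1}$ defining $\Bto$ in (\ref{I-1.16b}). A secondary point is the decay $\|e^{\calA^s_N t}\|_{\calL(\Bto)} \leq C_\varepsilon e^{-\gamma_0 t}$, obtained either directly from spectrum-determined growth for the analytic semigroup $e^{\calA t}$ restricted to the invariant stable subspace $W^s_N$, or by establishing the bound in both $\lso$ and $\calD(A_q)$ and interpolating. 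Once both are secured, the cascade argument together with the gap condition $\gamma > \gamma_0$ closes the proof in all three topologies at once.
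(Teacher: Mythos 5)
Your proposal is correct and follows essentially the same route as the paper's proof: the same decomposition $w = w_N + \zeta_N$, the same appeal to Theorem \ref{I-Thm-5.1} (with Remark \ref{I-Rmk-3.1}) for the autonomous finite-dimensional part, the same variation-of-parameters plus convolution estimate on the stable complement exploiting the gap $\gamma > \gamma_0$ and the spectral bound $\sup \Re\,\sigma(\calA^s_N) < -\gamma_0$, and the same bounded-perturbation argument for generation of the analytic semigroup. Your closing discussion of why $F \in \calL(\Bto)$ (the Besov regularity of $P(m u_k)$ surviving the characteristic-function truncation precisely because $2 - \rfrac{2}{p} < \rfrac{1}{q}$) usefully makes explicit a point the paper asserts without comment.
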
 
	
	\begin{proof}
		\noindent \underline{Step 1}: According to Theorem \ref{I-Thm-5.1}, the finite-dimensional system $w_N$ in (\ref{I-2.8a}) = (\ref{I-3.5}) is uniformly stabilized by the finite dimensional feedback controller $u = u_N$ given in the RHS of (\ref{I-5.2}) = RHS of (\ref{I-6.1}) with an arbitrary preassigned decay rate $\gamma > 0$, as given, either in the $\lso$-norm, or in the $\ds \lqaq$-norm in (\ref{I-5.1a}), or in particular, in the $\Bto$-norm as in (\ref{I-5.1b}).\\
		
		\noindent \underline{Step 2}: Next, we examine the impact of such constructive feedback control  $u_N$ on the $\zeta_N$-dynamics (\ref{I-2.8b}), whose explicit solution can be given by a variation of parameter formula,
		
		\begin{equation}\label{I-6.4}
		\zeta_N(t) = e^{\calA^s_N t} \zeta(0) + \int_{0}^{t} e^{\calA^s_N (t-r)}(I-P_N)P(m u_N(r))dr. 
		\end{equation}
		
		\noindent in the notation $\calA_N^s = (I - P_N) \calA, \ \calA = \calA_q$, of (\ref{I-2.5}). We now recall from Section \hyperref[I-Sec-1.10d]{1.10 (d)} that the Oseen operator $\calA_q$ generates a s.c. analytic semigroup not only on $\lso$ but also on $\ds \lqaq$, in  particular on $\Bto$. Hence the feedback operator $\ds \BA_{_F} = \BA_{_{F,q}}$ similarly generates a s.c. analytic semigroup on these spaces, being a bounded perturbation of the Oseen operator $\ds \calA = \calA_q$. So we can estimate (\ref{I-6.4}) in the norm of either of these spaces. Furthermore, the (point) spectrum of the generator $\calA_N^s$ on $W^s_N$ satisfies $\sup \{Re \ \sigma(\calA^s_N)\} < - \abs{\lambda_{N+1}} < - \gamma_0$ by assumption. We shall carry our the supplemental computations explicitly in the space $\Bto$ for the case of greatest interest in the nonlinear analysis of sections \ref{I-Sec-8}, \ref{I-Sec-9}. In the norm of $\Bto$, we obtain from (\ref{I-6.4}) since the operators $(I-P_N),P$ are bounded
		
		\begin{align}
		\norm{\zeta_N(t)} &\leq \norm{ e^{\calA_N^s t} \zeta(0)} + C \int_{0}^{t} \norm{e^{\calA_N^s (t -\tau)}} \norm{u_N(\tau)} d \tau \label{I-6.5}\\
		\norm{\zeta_N(t)}_{\Bto} &\leq C e^{- \gamma_0 t} \norm{\zeta(0)}_{\Bto} + C \int_{0}^{t} e^{ -\gamma_0 (t -r)} e^{-\gamma r}dr \norm{P_N w_0}_{\Bto}. \label{I-6.6}
		\end{align}
		
		\noindent recalling estimate (\ref{I-2.13}) or (\ref{I-5.11}) for $\ds \norm{u_N}$ in the $\ds \Bto$-norm. Since we may choose $\gamma > \gamma_0$ by Theorem \ref{I-Thm-2.1} (or Theorem \ref{I-Thm-5.1}), we then obtain 
		
		\begin{align}
		\norm{\zeta_N(t)}_{\Bto} &\leq C \left[ e^{-\gamma_0 t} + e^{-\gamma_0 t} \frac{1 - e^{-(\gamma - \gamma_0) t}}{\gamma - \gamma_0}\right] \norm{w_0}_{\Bto} \label{I-6.7}\\
		&\leq C e^{-\gamma_0 t} \norm{w_0}_{\Bto}, \quad \forall t>0. \label{I-6.8}
		\end{align}
		\noindent Then, estimate (\ref{I-6.8}) for $\zeta_N(t)$ along with estimate (\ref{I-2.13}) = (\ref{I-5.11}) for $w_N(t)$ with $\gamma > \gamma_0$ yields the desired estimate (\ref{I-6.3}) for $w = w_N + \zeta_N$ in the $\ds \Bto$-norm:
		\begin{align}
		\norm{w(t)}_{\Bto} &\leq \norm{\zeta_N(t)}_{\Bto} + \norm{w_N(t)}_{\Bto} \\
		&\leq \big[ \widetilde{C}_{\gamma_0} e^{-\gamma_0 t } + C_{\gamma} e^{-\gamma t } \big] \norm{w_0}_{\Bto}\\
		&\leq C_{\gamma_0} e^{-\gamma_0 t } \norm{w_0}_{\Bto}
		\end{align}
		\noindent and (\ref{I-6.3}) is proved. Similar computations from (\ref{I-6.4}) to (\ref{I-6.8}) apply in the $\ds \lso$-norm for $\zeta_N(t)$, as the Oseen operator generates a s.c. analytic semigroup on $\lso$ from Section \hyperref[I-Sec-1.10d]{1.10 (d)}. This, coupled with estimate (\ref{I-2.12}) for $w_N(t)$, yields estimate (\ref{I-6.2}) for the $\ds w = w_N + \zeta_N$ with $\lso$-norm. Theorem \ref{I-Thm-6.1} is established.
	\end{proof}
	
	\begin{rmk}
		Computations such as those in \cite[p 1473]{BT:2004} using the analyticity of the Oseen semigroup $e^{\calA_q t}$ show the alternative estimates (\ref{I-2.18a}-b) of Theorem \ref{I-Thm-2.2}.
	\end{rmk}
	
	\section{Maximal $L^p$ regularity on $\lso$ and for $T = \infty$ of the s.c. analytic semigroup $e^{\BA_{F,q}t}$ yielding uniform decay of the linearized $w$-problem (\ref{I-2.1}), once specialized as in (\ref{I-6.1}) of Theorem \ref{I-Thm-2.2} = Theorem \ref{I-Thm-6.1}.} \label{I-Sec-7}
	
	\noindent In this section, we return to the $w$-feedback problem (\ref{I-6.1}), $\ds w_t = \BA_{F,q} w$, where $p_k, u_k$ are the vectors claimed and constructed in Theorem \ref{I-Thm-2.1}, or Theorem \ref{I-Thm-2.2} (Theorem \ref{I-Thm-6.1}) and Remark \ref{I-Rmk-3.1}. As stated in Theorem \ref{I-Thm-6.1}, problem (\ref{I-6.1}) defines a s.c. analytic, uniformly stable semigroup $e^{\BA_{F,q}t}$ as in (\ref{I-6.2}):
	
	\begin{equation}\label{I-7.1}
	\norm{e^{\BA_{F,q}t}}_{\calL(\cdot)} \leq M_{\gamma_0} e ^{-\gamma_0 t}, \quad t \geq 0
	\end{equation}
	
	\noindent where $(\cdot)$ denotes the space $\lso$ or else $\lqaq$, in particular $\Bto$. Define the ``good'' bounded operator 
	\begin{equation}\label{I-7.2}
	Gw = m \Bigg( \sum_{k=1}^{K} (P_N w,p_k)_{\omega} u_k \Bigg), \ u_k \in W^u_N \subset \lso, \ p_k \in (W^u_N)^* \subset \lo{q'},
	\end{equation}
	\noindent By Theorem \ref{I-1.6}, the Oseen operator $\calA_q$ enjoys maximal $L^p$ regularity on $\lso$ up to $T < \infty$, see (\ref{I-1.48}) as well as (\ref{I-1.50}), (\ref{I-1.52}). Then the same property holds true up to $T < \infty$ for $\ds \BA_{_{F,q}} = \calA_q + G$, as G is a bounded operator \cite{Dore:2000}, \cite{KW:2004}, \cite{We:2001}. We now seek to establish maximal $L^p$ regularity up to $T = \infty$ of $\ds \BA_{_{F,q}}$, i.e. of the following problem  
	\begin{subequations}\label{I-7.3}
		\begin{align}
		w_t - \Delta w + L_e(w) + \nabla \pi &= Gw + F &\text{ in } (0, T] \times \Omega \equiv Q \label{I-7.3a}\\		
		div \ w &\equiv 0 &\text{ in } Q\\
		\begin{picture}(0,0)
		\put(-110,5){ $\left\{\rule{0pt}{35pt}\right.$}\end{picture}
		\left. w \right \rvert_{\Sigma} &\equiv 0 &\text{ in } (0, T] \times \Gamma \equiv \Sigma\\
		\left. w \right \rvert_{t = 0} &= w_0 &\text{ in } \Omega,
		\end{align}
	\end{subequations}
	
	\noindent $L_e$ defined in (\ref{I-1.39}) rewritten abstractly, upon application of the Helmholtz projection $P_q$ to (\ref{I-7.3a}) and $\Fs = P_q F$, as 
	\begin{align}
	w_t = \BA_{F,q} w + P_q F &= \calA_q w + P_q G w + P_q F \label{I-7.4}\\
	&= - \nu A_q w - A_{o,q}w + P_q G w + P_q F. \label{I-7.5}
	\end{align}
	
	\noindent Wlog, we take $\nu = 1$ henceforth. Here we have appended a subscript ``$q$'' to the generator $\BA_{_F}$ defined in (\ref{I-6.1}) which we rewrite as $\BA_{F,q}$. With $F_{\sigma} = P_q F$ its solution on $\lso$ is
	\begin{align}
	w(t) &= e^{\BA_{F,q} t} w_0 + \int_{0}^{t} e^{\BA_{F,q} (t - \tau)} \Fs(\tau) d \tau \label{I-7.6}\\
	&= e^{-A_q t} w_0 + \int_{0}^{t} e^{-A_q (t - \tau)} (P_qG - A_{o,q})w(\tau) d \tau + \int_{0}^{t} e^{-A_q (t - \tau)} \Fs(\tau) d \tau. \label{I-7.7}
	\end{align}
	\noindent As the present section is preparatory for the subsequent sections \ref{I-Sec-8} and \ref{I-Sec-9}, the case of greatest interest here is then for $\ds w_0 \in \Bto$, i.e. $\ds 1 < q, 1 < p < \rfrac{2q}{2q-1}$. Nevertheless we shall treat the general case $1 < p,q < \infty$.
	\begin{thm}\label{I-Thm-7.1}
		As in (\ref{I-1.43}) of Theorem \ref{I-Thm-1.6}, but now with $T = \infty$, assume 
		\begin{equation}\label{I-7.8}
		\Fs \in L^p(0,\infty;\lso), \quad w_0 \in \Big( \lso, \calD(A_q)\Big)_{1-\rfrac{1}{p},p}.
		\end{equation}
		Then there exists a unique solution of problem (\ref{I-7.3}) = (\ref{I-7.4}) = (\ref{I-7.5}).	
		\begin{subequations}\label{I-7.9}
			\begin{align}
			w \in \xipqs &= L^p(0, \infty; \calD(A_q)) \cap W^{1,p}(0, \infty; \lso) \text{, equivalently } \label{I-7.9a}\\		
			\begin{picture}(0,0)
			\put(-28,8){ $\left\{\rule{0pt}{20pt}\right.$}\end{picture}
			w \in \xipq &= L^p(0, \infty; W^{2,q}(\Omega)) \cap W^{1,p}(0, \infty; \lso) \hookrightarrow C \big(0,\infty; \Bso \big) \label{I-7.9b}
			\end{align}
		\end{subequations}
		
		\noindent (recall \cite[Theorem 4.10.2, p180 in BUC for $T = \infty$]{HA:1995} already noted in (\ref{I-1.30})) continuously on the data: there exist constants $C_0, C_1$ such that		
		\begin{subequations}
		\begin{align}
		C_0 \norm{w}_{C \big(0,\infty; \Bso \big)} 
		&\leq \norm{w}_{\xipqs} + \norm{\pi}_{\yipq} \nonumber\\
		&\equiv \norm{w'}_{L^p(0,\infty;L^q(\Omega))} +  \norm{A_q w}_{L^p(0,\infty;L^q(\Omega))} + \norm{\pi}_{\yipq} \label{I-7.10a}\\
		&\leq C_1 \bigg \{ \norm{\Fs}_{L^p(0,\infty;\lso)}  + \norm{w_0}_{\lqaq} \bigg \}. \label{I-7.10b}
		\end{align} 
		\end{subequations}
		
		\noindent Thus for $\ds 1 < q, 1 < p < \frac{2q}{2q-1}$, then the I.C. $w_0$ is in $\ds \Bto$. Equivalently,
		\begin{enumerate}[(i)]
			\item The map 
			\begin{equation}
			\begin{aligned}
			\Fs &\longrightarrow \int_{0}^{t} e^{\BA_{F,q}(t-\tau)}\Fs(\tau) d\tau \ : \text{continuous} \label{I-7.11}\\
			L^p(0,\infty;\lso) &\longrightarrow L^p(0,\infty;\calD(\BA_{F,q}) = \calD(\calA_q) = \calD(A_q)),
			\end{aligned}			
			\end{equation}
			whereby then automatically 
			\begin{equation}
			L^p(0,\infty;\lso) \longrightarrow W^{1,p}(0,\infty;\lso)
			\end{equation}
			and ultimately
			\begin{equation}\label{I-7.13}
			L^p(0,\infty;\lso) \longrightarrow \xipqs = L^p(0, \infty; \calD(\BA_{_{F,q}})) \cap W^{1,p}(0, \infty; \lso)
			\end{equation}
			\item The s.c. analytic semigroup $e^{\BA_{F,q}t}$ on the space $\lqaq, 1 < p < \infty $, as asserted in Theorem \ref{I-6.1}, in particular on the space $\ds \Bto, 1 < q, 1 < p < \frac{2q}{2q-1}$, satisfies
			\begin{equation}
			\begin{aligned} \label{I-7.14}
			e^{\BA_{F,q} t}: \ \text{continuous} \quad \lqaq &\longrightarrow \xipqs \quad (\text{equivalently } \longrightarrow \xipq)\\
			\text{in particular} \quad \Bto &\longrightarrow \xipqs \quad (\text{equivalently } \longrightarrow \xipq).
			\end{aligned}		
			\end{equation}
		\end{enumerate}
	\end{thm}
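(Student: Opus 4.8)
The plan is to deduce the $T=\infty$ maximal regularity of $\BA_{F,q}$ from two ingredients already in hand: the maximal $L^p$ regularity of the Oseen operator $\calA_q$ \emph{only up to finite $T$} (Theorem \ref{I-Thm-1.6}), and the \emph{uniform exponential stability} of the feedback semigroup $e^{\BA_{F,q}t}$ (Theorem \ref{I-Thm-6.1} $=$ Theorem \ref{I-Thm-2.2}). This second property is exactly the new feature that the bare Oseen operator lacks — $\calA_q$ carries the $N$ unstable eigenvalues — and is precisely why Theorem \ref{I-Thm-1.6} cannot reach $T=\infty$ while the present theorem can.

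First I would record that $\BA_{F,q} = \calA_q + P_q G$, with $G$ the bounded feedback operator in (\ref{I-7.2}); since $P_q G \in \calL(\lso)$ and $\calA_q$ has finite-horizon maximal $L^p$ regularity by Theorem \ref{I-Thm-1.6} (see (\ref{I-1.46})--(\ref{I-1.48})), the bounded-perturbation theorem for maximal regularity \cite{Dore:2000}, \cite{KW:2004}, \cite{We:2001} transfers this to $\BA_{F,q}$ on every interval $(0,T)$, $T<\infty$. Equivalently, writing $B := -\BA_{F,q} = \nu A_q + A_{o,q} - P_q G$, some shift $B+\omega$ ($\omega>0$) is $\calR$-sectorial of angle $<\pi/2$ on the $\mathrm{UMD}$ space $\lso$ ($1<q<\infty$ being reflexive) — the $T$-independent spectral reformulation of finite-interval maximal regularity.

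Next I would feed in the decay $\norm{e^{\BA_{F,q}t}}_{\calL(\lso)} \le M_{\gamma_0}e^{-\gamma_0 t}$, $\gamma_0>0$, of (\ref{I-6.2}). By analyticity the growth and spectral bounds coincide, so $\sigma(B)=-\sigma(\BA_{F,q})$ lies in $\{\Re\lambda \ge \gamma_0\}$, a \emph{proper} subsector of the open right half-plane bounded away from $i\BR$. Because the spectrum of $B$ now sits strictly inside that sector, the auxiliary shift $\omega$ may be removed: $B$ itself is $\calR$-sectorial of angle $<\pi/2$, and Weis's characterization yields maximal $L^p$ regularity on the \emph{whole} half-line $(0,\infty)$. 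Concretely this gives the Duhamel bound (\ref{I-7.11})--(\ref{I-7.13}), $\Fs \mapsto \int_{0}^{t} e^{\BA_{F,q}(t-\tau)}\Fs(\tau)\,d\tau$ continuous $L^p(0,\infty;\lso)\to L^p(0,\infty;\calD(A_q))$, and through the identity (\ref{I-7.4}) = (\ref{I-7.7}) the matching $W^{1,p}(0,\infty;\lso)$ bound, i.e. membership in $\xipqs$. The pressure $\pi\in\yipq$ and the estimate (\ref{I-7.10a})--(\ref{I-7.10b}) are then recovered by reading $\nabla\pi$ off (\ref{I-7.3a}) once $w$ is controlled, exactly as in Theorem \ref{I-Thm-1.5}, while the continuity into $C([0,\infty);\Bso)$ of (\ref{I-7.9b}) is the trace embedding (\ref{I-1.30}).

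For the homogeneous statement (\ref{I-7.14}) with $\Fs=0$ I would use that the temporal trace space of $\xipqs$ at $t=0$ is precisely $\lqaq$ (again (\ref{I-1.30})), so that global maximal regularity renders $w_0\mapsto e^{\BA_{F,q}t}w_0$ bounded $\lqaq\to\xipqs$, and in particular $\Bto\to\xipqs$ via (\ref{I-1.24b}). The main obstacle is exactly the passage $T<\infty \to T=\infty$: for the bare Oseen operator the constant in Theorem \ref{I-Thm-1.6} degenerates as $T\to\infty$ because $\sigma(\calA_q)$ meets the closed right half-plane, and only the feedback-induced spectral gap of Theorem \ref{I-Thm-6.1} legitimizes removing the shift. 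A point to treat with care is that $A_{o,q}$ is merely $A_q^{1/2}$-bounded, not bounded on $\lso$; I therefore keep it inside $\calA_q$ (whose finite-$T$ regularity is granted) and add only the genuinely bounded $P_q G$. Should one instead argue by hand from (\ref{I-7.7}), the term $\int_{0}^{t} e^{-A_q(t-\tau)}A_{o,q}w(\tau)\,d\tau$ must be absorbed using the intermediate norm $L^p(0,\infty;\calD(A_q^{1/2}))$ together with the $T=\infty$ Stokes regularity of Theorem \ref{I-Thm-1.5}, which is cleanest when packaged into the $\calR$-sectoriality route above.
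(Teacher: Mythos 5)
Your proposal is correct, but it reaches $T=\infty$ by a genuinely different mechanism than the paper's own proof. The paper argues by hand: it reruns the computation of Appendix B, writing $w$ via the Duhamel formula (\ref{I-7.7}) against the \emph{Stokes} semigroup $e^{-A_q t}$ (whose global-in-time maximal regularity is Solonnikov's Theorem \ref{I-Thm-1.5}) and treating $(P_qG-A_{o,q})w$ as a forcing term; the two ingredients that make $T=\infty$ work are (a) Young's convolution inequality combined with the decay (\ref{I-7.1}) of $e^{\BA_{F,q}t}$, giving $\norm{w}_{L^p(0,\infty;\lso)}\lesssim\norm{\Fs}_{L^p(0,\infty;\lso)}$ (exactly the step unavailable for the Oseen semigroup, whose unstable eigenvalues destroy the $L^1(0,\infty)$ kernel), and (b) the interpolation inequality (\ref{I-B.11}) to absorb $A_{o,q}w$ into $A_qw$; Part (ii) is the same computation with stability replacing finiteness of $T$. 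You instead work abstractly with $\BA_{F,q}=\calA_q+P_qG$: bounded perturbation gives finite-interval maximal regularity, and the spectral gap furnished by exponential stability lets you discard the shift in Weis's $\calR$-sectoriality characterization, producing half-line maximal regularity in one stroke. Both routes pivot on the same new fact, the feedback-induced uniform stability, so the proofs are morally parallel; yours is shorter and more modular (it applies verbatim to any exponentially stable analytic generator with finite-interval maximal regularity on a UMD space), while the paper's stays elementary, self-contained, and yields the quantitative constants directly. Three points in your write-up need tightening. First, $\lso$ is UMD because it is a closed subspace of the UMD space $L^q(\Omega)$, $1<q<\infty$; reflexivity alone is strictly weaker and does not imply UMD. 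Second, the shift-removal step is true but not a one-liner: either cite the standard principle that finite-interval maximal regularity plus exponential stability gives maximal regularity on $(0,\infty)$ (in the spirit of \cite{Dore:2000}), or justify it, e.g.\ by splitting the Duhamel integral into a near-diagonal part (handled by finite-interval maximal regularity, uniformly over unit intervals by translation invariance) and a tail part, which is a convolution with the $L^1(0,\infty)$ kernel $\norm{\BA_{F,q}e^{\BA_{F,q}s}}_{\calL(\lso)}\lesssim e^{-\gamma_0 s/2}$, $s\ge1$; alternatively, in the $\calR$-sectoriality language, use the resolvent identity for $|\lambda|$ large together with $\calR$-boundedness of the holomorphic resolvent family on the remaining compact subset of the resolvent set. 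Third, for the homogeneous estimate (\ref{I-7.14}) what you actually invoke is the trace-method characterization of the real interpolation space $\lqaq$ relative to the exponentially stable analytic semigroup $e^{\BA_{F,q}t}$ (the converse direction of the trace embedding (\ref{I-1.30})), rather than maximal regularity of the inhomogeneous problem per se; with that attribution your Part (ii) matches the paper's.
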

	
	\begin{proof}
		\underline{Part i}\\
		\noindent \textbf{Orientation}
		The proof is a suitable modification of the proof of Theorem \ref{I-Thm-1.6}, that is, of the maximal regularity of the Oseen operator $\calA_q$ on $\lso$, given in Appendix \hyperref[I-app-B]{B}. Namely, Step 1 = (\ref{I-B.3}) of that proof now exploits the uniform stability of $e^{\BA_{F,q}t}$ in (\ref{I-6.2})=(\ref{I-7.1}) which was not available for the Oseen semigroup $e^{\calA_qt}$ in Appendix \hyperref[I-app-B]{B}. Hence the convolution argument in (\ref{I-B.8}) may now be applied up to $T = \infty$, see below (\ref{I-7.16}). Next, Step 2 of the proof in (\ref{I-B.13})-(\ref{I-B.20}) in Appendix \hyperref[I-app-B]{B} applies also in the present proof, for $T \leq \infty$, to include $T = \infty$, as the term $-A_{o,q}$ in (\ref{I-B.13}) is replaced in the present proof by $(P_q G - A_{o,q})$, with $P_q G$ bounded.\\
		
		\noindent \textit{Step 1}: With reference to (\ref{I-7.6}) with $w_0 = 0$, we first establish the inequality
		
		\begin{equation}\label{I-7.15}
		\int_{0}^{\infty} \norm{w(t)}_{\lso}^p dt \leq C \int_{0}^{\infty} \norm{\Fs(t)}_{\lso}^p dt.
		\end{equation}
		
		\noindent Indeed, from (\ref{I-7.6}), in the $\lso$-norm, recalling (\ref{I-7.1})
		
		\begin{align}\label{I-7.16}
		\norm{w(t)} &\leq \int_{0}^{t} \norm{e^{\BA_{F,q}(t - \tau)}} \norm{\Fs(\tau)} d \tau \nonumber \\
		&\leq M_{\gamma_0} \int_{0}^{t} e^{-\gamma_0 (t - \tau)} \norm{\Fs(\tau)} d \tau \in L^p(0, \infty)
		\end{align}
		\noindent being the convolution of a $L^1(0, \infty)$-function with an $L^p(0, \infty)$-function (Young's Theorem) \cite{CS:1979}. Then (\ref{I-7.15}) is proved.\\
		
		\noindent \textit{Step 2}: Again for $w_0 = 0$ we obtain from (\ref{I-7.7})
		\begin{equation}
		A_q w(t) = A_q\int_{0}^{t} e^{-A_q (t - \tau)} (P_qG - A_{o,q})w(\tau) d \tau + A_q \int_{0}^{t} e^{-A_q (t - \tau)} \Fs(\tau) d \tau \label{I-7.17}
		\end{equation}
		\noindent We shall establish the following inequality
		\begin{equation}\label{I-7.18}
		\int_{0}^{\infty} \norm{A_q w(t)}_{\lso}^p dt \leq C \int_{0}^{\infty} \norm{w(t)}_{\lso}^p dt + C \int_{0}^{\infty} \norm{\Fs(t)}_{\lso}^p dt.
		\end{equation}
		\noindent (Compare with (\ref{I-B.4}), which holds true for any $T \leq \infty$, including $T = \infty$). In fact, to this end, as in that proof, using the maximal regularity up to $T = \infty$ of the Stokes semigroup, as well as (\ref{I-7.8}) for $\Fs$, we estimate from (\ref{I-7.17})	
		\begin{align}
		\norm{A_q w}_{L^P(0, \infty, \lso)} &\leq C \norm{\Fs}_{L^P(0, \infty, \lso)} + C \norm{[G - A_{o,q}]w}_{L^P(0, \infty, \lso)} \label{I-7.19}\\
		&\leq C \Big\{ \norm{\Fs}_{L^P(0, \infty, \lso)} + C \norm{w}_{L^P(0, \infty, \lso)} \Big\} + C \norm{A_{o,q}w}_{L^P(0, \infty, \lso)}, \label{I-7.20}
		\end{align}
		\noindent as $G$ is bounded. Using the same interpolation argument leading to (\ref{I-B.20}), based on the interpolation inequality (\ref{I-B.11}), we obtain from (\ref{I-7.20})	
		\begin{multline}\label{I-7.21}
		\norm{ A_q w}_{L^P(0, \infty, \lso)} \leq C\norm{\Fs}_{L^P(0, \infty, \lso)} + C \norm{w}_{L^P(0, \infty, \lso)}\\
		+\varepsilon C \norm{A_q w}_{L^P(0, \infty, \lso)} + C_{\varepsilon} \norm{w}_{L^P(0, \infty, \lso)}
		\end{multline}
		\noindent from which we obtain
		\begin{equation}
		\norm{ A_q w}_{L^P(0, \infty, \lso)} \leq \left( \frac{C}{1 - \varepsilon C} \right) \norm{\Fs}_{L^P(0, \infty, \lso)} + \left( \frac{C + C_{\varepsilon}}{1 - \varepsilon C}\right) \norm{w}_{L^P(0, \infty, \lso)}
		\end{equation}
		and then estimate in (\ref{I-7.18}) in Step 2 is established.\\
		
		\noindent \textit{Step 3}: Substituting (\ref{I-7.15}) in the RHS of (\ref{I-7.18}) yields
		\begin{equation}
		\norm{ A_q w}_{L^P(0, \infty, \lso)} \leq C\norm{\Fs}_{L^P(0, \infty, \lso)}
		\end{equation}
		\noindent and (\ref{I-7.11}) is established via (\ref{I-7.6}) with $w_0 = 0$, and $\calD(\BA_{F,q}) = \calD(A_q)$.\\
		
		\noindent \underline{Part ii}\\
		Let $\ds w_0 \in \lqaq$, \big[in particular $\ds w_0 \in \Bto$ for $\ds 1 < q < \infty, 1 < p < \rfrac{2q}{2q-1}$ by (\ref{I-1.16b}) \big] and consider the s.c. analytic exponentially stable semigroup $e^{\BA_{F,q} t}$ in such space, as guaranteed by Theorem \ref{I-Thm-6.1}, see (\ref{I-7.1}): 
		\begin{align}
		w(t) &= e^{\BA_{F,q}t}w_0; \quad w_t = \BA_{F,q} w = -A_q w + (P_qG - A_{o,q})w\\
		w(t) &= e^{-A_q t}w_0 + \int_{0}^{t}e^{-A_q(t-\tau)}(P_qG - A_{o,q})w(\tau)d \tau\\
		A_q w(t) &= A_q e^{-A_q t}w_0 + A_q \int_{0}^{t}e^{-A_q(t-\tau)}(P_qG - A_{o,q})w(\tau)d \tau
		\end{align}	
		\noindent counterpart of (\ref{I-B.18}), that is with $-A_{o,q}$ in (\ref{I-B.18}) replaced by $P_q G - A_{o,q}$ now, with $P_q G$ bounded, see (\ref{I-7.2}).Thus essentially the same proof leading to (\ref{I-B.24}) yields now
		\begin{align}
		\norm{\BA_{F,q}w}_{L^P(0, \infty, \lso)} &= \norm{\BA_{F,q} e^{\BA_{F,q}t}w_0}_{L^P(0, \infty, \lso)} \nonumber\\
		&\leq C \norm{w_0}_{\lqaq} \label{I-7.27}
		\end{align}
		\noindent with $\calD(\BA_{F,q}) = \calD(A_q)$. Then (\ref{I-7.27}) proves (\ref{I-7.14}).
	\end{proof}
	
	\section{Proof of Theorem \ref{I-Thm-2.3}: Well-posedness on $\xipq$ of the non-linear $z$-dynamics in feedback form}\label{I-Sec-8}
	
	\noindent In this section we return to the translated non-linear $z$-dynamics (\ref{I-1.12a}) and apply to it the feedback control $\ds u = \sum_{k = 1}^{K} (P_Nz,p_k)_{\omega}u_k$, i.e. of the same structure as the feedback identified on the RHS of the linearized $w$-dynamics (\ref{I-6.1}), which produced the s.c. analytic, uniformly stable feedback semigroup $e^{\BA_{F,q}t}$ on $\lso$. Here the vectors $\ds p_k \in (W^u_N)^*, \ u_k \in W^u_N$ are precisely those identified in Theorem \ref{I-Thm-5.1} = Theorem \ref{I-Thm-2.1}. Thus, returning to (\ref{I-1.12}), in this section we consider the following translated feedback non-linear problem
	\begin{equation}\label{I-8.1}
	\frac{dz}{dt}  - \calA_q z + \calN_q z = P_q \Bigg ( m \Bigg( \sum_{k=1}^{K} (z_N,p_k)_{\omega} u_k \Bigg) \Bigg); \quad z_0 = P_N z(0).
	\end{equation}
	\noindent Recalling from Theorem \ref{I-2.2} = Theorem \ref{I-Thm-6.1}, Eq (\ref{I-6.1}) the feedback generator $\BA_{F,q}$ as well as the bounded operator $G$ in (\ref{I-7.2}), we can rewrite (\ref{I-8.1}) as
	\begin{equation}\label{I-8.2}
	z_t = \BA_{F,q}z - \calN_q z = -(\nu A_q + A_{o,q})z + P_q Gz - \calN_q z, \quad z(0) = z_0, 
	\end{equation}
	\noindent whose variation of parameters formula is 
	\begin{equation}\label{I-8.3}
	z(t) = e^{\BA_{F,q}t}z_0 - \int_{0}^{t} e^{\BA_{F,q}(t - \tau)}\calN_q z(\tau) d \tau.
	\end{equation}
	\noindent We already know from (\ref{I-6.3}) that for $\ds z_0 \in \Bto, 1 < q < \infty, \ 1 < p < \rfrac{2q}{2q-1}$ we have
	\begin{equation}\label{I-8.4}
	\norm{e^{\BA_{F,q}t} z_0}_{\Bto} \leq M_{\gamma_0} e ^{-\gamma_0 t} \norm{z_0}_{\Bto}, \quad t \geq 0
	\end{equation}
	\noindent with $M_{\gamma_0}$ possibly depending on $p,q$. Maximal regularity properties corresponding to the solution operator formula in (\ref{I-8.3}) were established in section \ref{I-Sec-7}. Accordingly, for $z_0 \in \Bto$ and $f \in \xipqs \equiv L^p(0, \infty; \calD(\BA_{F,q})) \cap W^{1,p}(0, \infty;\lso), \ \calD(\BA_{F,q}) = \calD(A_q)$, recall (\ref{I-7.11}) we define the operator $\calF$ by
	\begin{equation}\label{I-8.5}
	\calF(z_0,f)(t) = e^{\BA_{F,q}t}z_0 - \int_{0}^{t} e^{\BA_{F,q}(t - \tau)}\calN_q f(\tau) d \tau.
	\end{equation}
	\noindent The main result of this section is Theorem \ref{I-Thm-2.3}. restated as  
	
	\begin{thm}\label{I-Thm-8.1}
		Let $d = 2,3, \ q > d$ and $\ds 1 < p < \rfrac{2q}{2q-1}$. There exists a positive constant $r_1 > 0$ (identified in the proof below in (\ref{I-8.24})), such that if
		\begin{equation}\label{I-8.6}
		\norm{z_0}_{\Bto} < r_1,
		\end{equation}
		\noindent then the operator $\calF$ in (\ref{I-8.5}) has a unique fixed point nonlinear semigroup solution on $\xipqs$ 
		\begin{equation}\label{I-8.7}
		\calF(z_0,z) = z, \text{ or } z(t) = e^{\BA_{F,q}t}z_0 - \int_{0}^{t} e^{\BA_{F,q}(t - \tau)}\calN_q z(\tau) d \tau
		\end{equation}
		\noindent which therefore is the unique solution of problem (\ref{I-8.2}) (= (\ref{I-8.1})) in $\xipqs$.
	\end{thm}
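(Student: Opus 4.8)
The plan is to solve the fixed-point equation $\calF(z_0,z)=z$ in the maximal-regularity space $\xipqs$ by the contraction mapping principle, on a small closed ball centred at the origin. The two structural ingredients are already in hand: the linear maximal-regularity estimates for $\BA_{F,q}$ up to $T=\infty$ from Theorem \ref{I-Thm-7.1}, and a quadratic (bilinear) bound for the Navier--Stokes term $\calN_q$ as a map $\xipqs\to L^p(0,\infty;\lso)$. The global-in-time ($T=\infty$) character of the argument is exactly what forces us to use the feedback generator $\BA_{F,q}$ rather than the bare Oseen operator: it is the uniform exponential stability of $e^{\BA_{F,q}t}$, built into Theorem \ref{I-Thm-7.1}, that makes the relevant $L^p$-in-time bounds hold up to $T=\infty$.

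First I would record the linear estimates. By the semigroup continuity (\ref{I-7.14}), $\norm{e^{\BA_{F,q}t}z_0}_{\xipqs}\le C_0\norm{z_0}_{\Bto}$ for $z_0\in\Bto$, and by the convolution continuity (\ref{I-7.11})--(\ref{I-7.13}), the map $g\mapsto\int_0^t e^{\BA_{F,q}(t-\tau)}g(\tau)\,d\tau$ is bounded $L^p(0,\infty;\lso)\to\xipqs$, both uniformly up to $T=\infty$. Hence
\[
\norm{\calF(z_0,f)}_{\xipqs}\le C_0\norm{z_0}_{\Bto}+C_1\norm{\calN_q f}_{L^p(0,\infty;\lso)},
\]
and the whole problem is reduced to controlling $\calN_q f=P_q\big[(f\cdot\nabla)f\big]$ in $L^p(0,\infty;\lso)$ quadratically in $\norm{f}_{\xipqs}$, together with its polarized Lipschitz version.

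The crux --- and the step I expect to be the main obstacle --- is this bilinear estimate. Since $P_q$ is bounded on $L^q$, it suffices to bound $\norm{(f\cdot\nabla)g}_{L^p(0,\infty;L^q)}$ by $C\norm{f}_{\xipqs}\norm{g}_{\xipqs}$. Pointwise in time I would use the H\"older splitting $\norm{(f\cdot\nabla)g}_{L^q}\le\norm{f}_{L^\infty}\norm{\nabla g}_{L^q}$, where the embedding $W^{1,q}(\Omega)\hookrightarrow L^\infty(\Omega)$ --- valid precisely because $q>d$ --- converts the spatial regularity into the $L^\infty$ bound on the first factor. The delicate point is the time integrability: one cannot simply place $f$ in $C([0,\infty);\Bto)$, because $\Bto\hookrightarrow L^\infty(\Omega)$ fails in the admissible range (for $p$ near $1$ one has $2-\tfrac2p<\tfrac dq$). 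Instead I would exploit both components of the norm, namely $\xipqs\hookrightarrow L^\infty(0,\infty;\Bto)\cap L^p(0,\infty;\calD(A_q))$ with $\calD(A_q)\subset W^{2,q}(\Omega)$, and interpolate in space while distributing the integrability in time via a H\"older split $\tfrac1p=\tfrac1a+\tfrac1b$; the half-line inclusion $L^p(0,\infty;Z)\cap L^\infty(0,\infty;Z)\subset L^r(0,\infty;Z)$ for all $r\in[p,\infty]$ then furnishes the intermediate mixed-norm bounds. Since the data space $\Bto$ is subcritical with respect to the Navier--Stokes scaling in this parameter range, the split closes with room to spare, yielding $\norm{\calN_q f}_{L^p(0,\infty;\lso)}\le C\norm{f}_{\xipqs}^2$; the same bilinear structure gives $\norm{\calN_q f_1-\calN_q f_2}_{L^p(0,\infty;\lso)}\le C\big(\norm{f_1}_{\xipqs}+\norm{f_2}_{\xipqs}\big)\norm{f_1-f_2}_{\xipqs}$.

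Finally I would close the fixed point. Combining the displayed bounds, $\calF(z_0,\cdot)$ maps the ball $B_R=\{f\in\xipqs:\norm{f}_{\xipqs}\le R\}$ into itself and is a contraction there, provided $R$ is small and $\norm{z_0}_{\Bto}<r_1$ with $r_1,R$ chosen so that $C_0 r_1+C_1 C R^2\le R$ and $2C_1 C R<1$; this identifies the threshold $r_1$ of (\ref{I-8.6}). The Banach fixed-point theorem then produces the unique $z\in\xipqs$ with $\calF(z_0,z)=z$, which is the asserted solution of (\ref{I-8.2}); the embedding (\ref{I-2.23}), i.e. $\xipqs\hookrightarrow C([0,\infty);\Bto)$, finally places $z$ in the continuous-in-time class.
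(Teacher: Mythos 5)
Your overall architecture coincides with the paper's: the same reduction via the $T=\infty$ maximal regularity estimates (\ref{I-7.13}), (\ref{I-7.14}) of Theorem \ref{I-Thm-7.1}, the same quadratic target bound $\norm{\calN_q f}_{L^p(0,\infty;\lso)}\le C\norm{f}^2_{\xipqs}$, and the same small-ball fixed point with identical threshold bookkeeping ($C_0r_1+C_1CR^2\le R$, $2C_1CR<1$) as in Theorems \ref{I-Thm-8.2}--\ref{I-Thm-8.3}, (\ref{I-8.21})--(\ref{I-8.24}). Where you genuinely diverge is in the key bilinear estimate. The paper splits the nonlinearity the \emph{other} way: $\norm{(f\cdot\nabla)f}_{L^q(\Omega)}\le\norm{f}_{L^q(\Omega)}\norm{\nabla f}_{L^\infty(\Omega)}$, see (\ref{I-8.12}); the embedding $W^{1,q}\hookrightarrow L^\infty$ ($q>d$) is applied to $\nabla f$, so the sup-norm factor is absorbed by the $L^p(0,\infty;\calD(A_q))$ component of the $\xipqs$-norm, while the remaining factor $\norm{f}_{L^q}$ is controlled uniformly in time by $\xipqs\hookrightarrow L^\infty(0,\infty;\lso)$, (\ref{I-8.15}); the product is then in $L^p(0,\infty)$ outright, with no interpolation and no H\"older in time. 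Your splitting $\norm{f}_{L^\infty}\norm{\nabla f}_{L^q}$ places the sup-norm on the factor for which only the (too weak) uniform-in-time $\Bto$-bound is available --- you correctly identify this obstruction --- and therefore forces the interpolation-plus-time-H\"older repair. That repair does work, but you assert rather than verify the decisive exponent count, and that count is the crux of your version: writing $\norm{f(t)}_{L^\infty(\Omega)}\lesssim\norm{f(t)}_{\Bto}^{1-\theta}\norm{f(t)}_{\calD(A_q)}^{\theta}$, which requires $(1-\theta)(2-\rfrac{2}{p})+2\theta>\rfrac{d}{q}$, i.e. $\theta>1-p+\rfrac{pd}{2q}$, and $\norm{\nabla f(t)}_{L^q(\Omega)}\lesssim\norm{f(t)}_{\Bto}^{1-\mu}\norm{f(t)}_{\calD(A_q)}^{\mu}$ with $\mu=1-\rfrac{p}{2}$, the H\"older split in time requires $\theta+\mu\le 1$, i.e. $\theta\le\rfrac{p}{2}$; compatibility of the two constraints on $\theta$ amounts to $p>\rfrac{2q}{3q-d}$, which indeed holds for every admissible $p>1$ because $q>d$ gives $\rfrac{2q}{3q-d}<1$. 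So your route closes --- your ``room to spare'' claim is correct --- but it costs an interpolation argument that the paper's choice of H\"older pairing avoids entirely, and in your write-up this is precisely the step left unproved; if you keep your splitting, this arithmetic must be written out. (A minor point: the failure of $\Bto\hookrightarrow L^\infty(\Omega)$ is not confined to ``$p$ near $1$''; throughout the admissible range $1<p<\rfrac{2q}{2q-1}$ one has $2-\rfrac{2}{p}<\rfrac{1}{q}$.)
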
 
	
	\noindent The proof of Theorem \ref{I-Thm-2.3} = Theorem \ref{I-Thm-8.1} is accomplished in two steps.\\
	
	\noindent \underline{Step 1}:
	\begin{thm}\label{I-Thm-8.2}
		Let $d = 2,3, \ q > d$ and $\ds 1 < p < \rfrac{2q}{2q-1}$. There exists a positive constant $r_1 > 0$ (identified in the proof below in (\ref{I-8.24})) and a subsequent constant $r>0$ (identified in the proof below in (\ref{I-8.22})) depending on $r_1 > 0$ and the constant $C>0$ in (\ref{I-8.20}), such that with $\ds \norm{z_0}_{\Bto} < r_1$ as in (\ref{I-8.6}), the operator $\calF(z_0,f)$ maps the ball $B(0,r)$ in $\xipqs$ into itself. \qedsymbol
	\end{thm}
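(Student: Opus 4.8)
The plan is to reduce the self-mapping property entirely to a single quadratic estimate on the nonlinear term, exploiting the maximal $L^p$-regularity up to $T=\infty$ of the feedback generator $\BA_{F,q}$ already secured in Theorem \ref{I-Thm-7.1}. First I would record the two linear bounds contained in that theorem: the map $z_0 \mapsto e^{\BA_{F,q}\,\cdot}\,z_0$ is continuous $\Bto \to \xipqs$, say with norm $C_0$, by (\ref{I-7.14}); and the convolution $g \mapsto \int_0^{\,\cdot} e^{\BA_{F,q}(\cdot-\tau)}g(\tau)\,d\tau$ is continuous $L^p(0,\infty;\lso)\to\xipqs$, say with norm $C_1$, by (\ref{I-7.11})--(\ref{I-7.13}). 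Note that for $f\in\xipqs$ one has $f(t)\in\calD(A_q)\subset W^{1,q}(\Omega)\cap L^\infty(\Omega)$ for a.e.\ $t$ (using $q>d$), so $\calN_q f$ is well defined pointwise in time. Applying these facts to the definition (\ref{I-8.5}) of $\calF$ gives at once
\begin{equation*}
\norm{\calF(z_0,f)}_{\xipqs} \leq C_0\,\norm{z_0}_{\Bto} + C_1\,\norm{\calN_q f}_{L^p(0,\infty;\lso)},
\end{equation*}
so everything hinges on bounding $\calN_q f = P_q[(f\cdot\nabla)f]$ quadratically in $\norm{f}_{\xipqs}$.

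The crucial step, and the one I expect to be the main obstacle, is the nonlinear estimate
\begin{equation*}
\norm{\calN_q f}_{L^p(0,\infty;\lso)} \leq C\,\norm{f}_{\xipqs}^2 ,
\end{equation*}
which is exactly where the standing hypothesis $q>d$ is forced. Since $P_q$ is bounded on $L^q(\Omega)$, it suffices to control $(f\cdot\nabla)f$ in $L^p(0,\infty;L^q(\Omega))$. I would split the two factors by H\"older in space, $\norm{(f\cdot\nabla)f}_{L^q(\Omega)} \leq \norm{f}_{L^q(\Omega)}\,\norm{\nabla f}_{L^\infty(\Omega)}$, and then distribute the resulting time-norms so that the borderline factor lands where the data lives. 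For the gradient factor I would use $q>d$: then $W^{1,q}(\Omega)\hookrightarrow L^\infty(\Omega)$, so from $f\in L^p(0,\infty;\calD(A_q))$, $\calD(A_q)=W^{2,q}(\Omega)\cap\lso$, one gets $\nabla f\in L^p(0,\infty;L^\infty(\Omega))$ with norm $\leq C\norm{f}_{\xipqs}$. For the remaining factor I would use the trace embedding (\ref{I-1.30}), namely $\xipqs\hookrightarrow C([0,\infty);\Bto)$, together with $\Bto\subset\lso\hookrightarrow L^q(\Omega)$, to obtain the time-uniform bound $\norm{f}_{L^\infty(0,\infty;L^q(\Omega))}\leq C\norm{f}_{\xipqs}$. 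A single application of H\"older in time with exponents $\infty$ and $p$ then yields
\begin{equation*}
\norm{(f\cdot\nabla)f}_{L^p(0,\infty;L^q(\Omega))} \leq \norm{f}_{L^\infty(0,\infty;L^q(\Omega))}\,\norm{\nabla f}_{L^p(0,\infty;L^\infty(\Omega))} \leq C\,\norm{f}_{\xipqs}^2 ,
\end{equation*}
the desired bound. I emphasize that this particular allocation of the space-time norms is what avoids demanding the \emph{false} embedding $\Bto\hookrightarrow L^\infty(\Omega)$ (which fails for $p$ near $1$, where $2-\tfrac2p\to 0$): placing $f$ itself in $L^\infty_t L^q_x$ and the gradient in $L^p_tL^\infty_x$ is the precise point where the critical low-regularity Besov setting and the condition $q>d$ cooperate.

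Finally I would close the self-mapping argument by a standard smallness selection. With the preceding estimates, whenever $\norm{f}_{\xipqs}\leq r$ and $\norm{z_0}_{\Bto}<r_1$ one has
\begin{equation*}
\norm{\calF(z_0,f)}_{\xipqs} \leq C_0\,r_1 + C_1 C\,r^2 .
\end{equation*}
Choosing first $r>0$ so small that $C_1 C\,r \leq \tfrac12$ (hence $C_1 C r^2 \leq \tfrac r2$), and then $r_1>0$ so small that $C_0 r_1 \leq \tfrac r2$, forces $\norm{\calF(z_0,f)}_{\xipqs}\leq r$, i.e.\ $\calF(z_0,\cdot)$ maps the ball $B(0,r)\subset\xipqs$ into itself; these are the thresholds named $r$ and $r_1$ in the statement. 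The genuinely delicate ingredient is the quadratic estimate above, and what makes it usable here is that its constant is \emph{independent of the horizon} $T=\infty$ — precisely the content of the global maximal regularity of Theorem \ref{I-Thm-7.1}. Once this self-map bound is in hand, an entirely analogous difference estimate (replacing $f\nabla f - g\nabla g$ by the bilinear splitting $f\nabla(f-g)+ (f-g)\nabla g$) delivers the contraction on the same ball, and the Banach fixed point theorem then yields the unique solution asserted in Theorem \ref{I-Thm-8.1}.
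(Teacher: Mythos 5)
Your proposal is correct and follows essentially the same route as the paper's own proof: maximal regularity of $e^{\BA_{F,q}t}$ from Theorem \ref{I-Thm-7.1} reduces the self-mapping claim to the quadratic bound $\norm{\calN_q f}_{L^p(0,\infty;\lso)} \leq C \norm{f}^2_{\xipqs}$, which you establish by the identical H\"older splitting the paper uses — $f$ placed in $L^{\infty}(0,\infty;L^q(\Omega))$ and $\nabla f$ in $L^p(0,\infty;L^{\infty}(\Omega))$ via $W^{1,q}(\Omega)\hookrightarrow L^{\infty}(\Omega)$ for $q>d$. The only cosmetic differences are that you obtain the $L^{\infty}_t L^q_x$ control from the trace embedding (\ref{I-1.30}) rather than the paper's citation of the embedding $\xipqs \hookrightarrow L^{\infty}(0,\infty;\lso)$, and that you fix $r$ first and then $r_1$, whereas the paper fixes $r_1 \leq 1/(4C^2)$ as in (\ref{I-8.24}) and then takes $r$ between the roots of $Cr^2 - r + Cr_1 \leq 0$ as in (\ref{I-8.22}); these are equivalent.
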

	\noindent Theorem \ref{I-Thm-8.1} will follow then from Theorem \ref{I-Thm-8.2} after establishing that\\
	
	\noindent \underline{Step 2}:
	\begin{thm}\label{I-Thm-8.3}
		Let $d = 2,3, \ q > 3$ and $\ds 1 < p < \rfrac{2q}{2q-1}$. There exists a positive constant $r_1 > 0$, such that if $\ds \norm{z_0}_{\Bto} < r_1$ as in (\ref{I-8.6}), then there exists a constant $0 < \rho_0 < 1$, such that the operator $\calF(z_0,f)$ defines a contraction in the ball $B(0,\rho_0)$ of $\xipqs$ \qedsymbol
	\end{thm}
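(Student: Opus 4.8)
The plan is to prove the contraction by reducing the difference $\calF(z_0,f_1)-\calF(z_0,f_2)$ to a convolution against the \emph{quadratic} difference $\calN_q f_1 - \calN_q f_2$, and then to control this convolution by the maximal regularity of the feedback generator $\BA_{F,q}$ up to $T=\infty$ from Theorem \ref{I-Thm-7.1}. First I would observe that in the definition (\ref{I-8.5}) the free evolution $e^{\BA_{F,q}t}z_0$ is common to $\calF(z_0,f_1)$ and $\calF(z_0,f_2)$, hence cancels, leaving
\begin{equation*}
\calF(z_0,f_1)(t)-\calF(z_0,f_2)(t) = -\int_0^t e^{\BA_{F,q}(t-\tau)}\big[\calN_q f_1(\tau)-\calN_q f_2(\tau)\big]\,d\tau .
\end{equation*}
In particular the radius $r_1$ on $z_0$ in (\ref{I-8.6}) plays no role in the contraction constant; it is used only (through Step 1, Theorem \ref{I-Thm-8.2}) to keep $\calF(z_0,\cdot)$ a self-map of the ball. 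Applying the continuity of the convolution map (\ref{I-7.11})--(\ref{I-7.13}) of Theorem \ref{I-Thm-7.1}, from $L^p(0,\infty;\lso)$ into $\xipqs$, yields a constant $C>0$ independent of $f_1,f_2$ with
\begin{equation*}
\norm{\calF(z_0,f_1)-\calF(z_0,f_2)}_{\xipqs} \le C\,\norm{\calN_q f_1-\calN_q f_2}_{L^p(0,\infty;\lso)} .
\end{equation*}

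Next I would exploit that $\calN_q(v)=P_q[(v\cdot\nabla)v]$ is bilinear, so that polarization gives
\begin{equation*}
\calN_q f_1-\calN_q f_2 = P_q\big[(f_1\cdot\nabla)(f_1-f_2)\big] + P_q\big[((f_1-f_2)\cdot\nabla)f_2\big].
\end{equation*}
The decisive ingredient is the bilinear bound $\norm{P_q[(a\cdot\nabla)b]}_{L^p(0,\infty;\lso)} \le C_N \norm{a}_{\xipqs}\norm{b}_{\xipqs}$, which is \emph{exactly} the nonlinear estimate already secured in Step 1 (Theorem \ref{I-Thm-8.2}); I would simply invoke it on each of the two summands above. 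This is the genuine analytic heart of the whole fixed-point scheme, and the hard part: in the critical, low-regularity setting the trace space $\Bto=\widetilde{B}^{2-\rfrac{2}{p}}_{q,p}(\Omega)$ does \emph{not} embed into $L^\infty(\Omega)$ (its smoothness index lies below $\rfrac{1}{q}$), so one cannot simply take $\norm{a(t)}_{L^\infty(\Omega)}$ in $L^\infty(0,\infty)$. Instead the estimate must be closed by the mixed-derivative (anisotropic Sobolev) embedding of $\xipqs$ into a scale $L^{\sigma}(0,\infty;W^{1,q}(\Omega)) \hookrightarrow L^{\sigma}(0,\infty;L^\infty(\Omega))$, the last embedding being precisely where the hypothesis $q>d$ enters, together with $b$ contributing $\nabla b$ in a complementary $L^{\sigma'}(0,\infty;L^q(\Omega))$, and Hölder in time with $\rfrac{1}{\sigma}+\rfrac{1}{\sigma'}=\rfrac{1}{p}$; the admissible range $1<p<\rfrac{2q}{2q-1}$ is exactly what makes these two time exponents compatible on the half-line.

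Granting the bilinear bound, the difference estimate becomes
\begin{equation*}
\norm{\calF(z_0,f_1)-\calF(z_0,f_2)}_{\xipqs} \le 2\,C\,C_N\,\rho_0\,\norm{f_1-f_2}_{\xipqs}
\end{equation*}
for $f_1,f_2$ in the ball $B(0,\rho_0)\subset\xipqs$. I would then fix $\rho_0$ so small that $2\,C\,C_N\,\rho_0<1$ and $\rho_0\le r$ (the radius of Theorem \ref{I-Thm-8.2}), and shrink $r_1$ if necessary so that, by Step 1, $\calF(z_0,\cdot)$ still maps $B(0,\rho_0)$ into itself. Then $\calF(z_0,\cdot)$ is a contraction of the complete metric space $B(0,\rho_0)$, and the Banach fixed point theorem delivers the unique fixed point asserted in Theorem \ref{I-Thm-8.1}, i.e. the unique solution of (\ref{I-8.2}) $=$ (\ref{I-8.1}) in $\xipqs$. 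The only obstacle of substance is thus the bilinear estimate inherited from Step 1; everything specific to the contraction is the routine polarization-plus-smallness bookkeeping above.
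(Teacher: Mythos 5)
Your proof skeleton is the same as the paper's own proof (Steps 1--4 of Section \ref{I-Sec-8}): the free term $e^{\BA_{F,q}t}z_0$ cancels in the difference, the maximal regularity property (\ref{I-7.13}) reduces the contraction estimate to bounding $\norm{\calN_q f_1-\calN_q f_2}_{L^p(0,\infty;\lso)}$, a polarization splits this into two bilinear terms (the paper adds and subtracts $(f_2\cdot\nabla)f_1$ rather than your $(f_1\cdot\nabla)f_2$, which is immaterial), each term is bounded by $C\norm{f_1-f_2}_{\xipqs}\norm{f_i}_{\xipqs}$ by re-running the Step-1 estimates, and the ball radius is then taken small enough that $2 C C_N \rho_0<1$, exactly as in (\ref{I-8.42}) and the lines following it. Your observation that $r_1$ enters only through the self-map property and not through the contraction constant also matches the paper.

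However, your middle paragraph misdescribes the mechanism behind the bilinear bound, and this is worth correcting since you call it ``the genuine analytic heart.'' The paper never places the undifferentiated factor in $L^\infty(\Omega)$: the H\"{o}lder splitting in (\ref{I-8.12}) puts the \emph{gradient} factor in $L^\infty(\Omega)$ --- via $W^{1,q}(\Omega)\hookrightarrow L^\infty(\Omega)$ for $q>d$, see (\ref{I-8.16})--(\ref{I-8.18}), measured in $L^p$ in time --- and the undifferentiated factor in $L^q(\Omega)$, measured in $L^\infty$ in time via the embedding $\xipqs\hookrightarrow L^\infty(0,\infty;\lso)$ of (\ref{I-8.15a}). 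Because the undifferentiated factor is only ever measured in spatial $L^q$, the failure of $\Bto\hookrightarrow L^\infty(\Omega)$ that you worry about never arises, and no mixed-derivative/anisotropic embedding or intermediate exponents $\sigma,\sigma'$ are needed: the time exponents are simply $\sigma=\infty$, $\sigma'=p$, which works for every $1<p<\infty$. Consequently your claim that the range $1<p<\rfrac{2q}{2q-1}$ is ``exactly what makes these two time exponents compatible'' is a misattribution: that restriction plays no role whatsoever in the nonlinear estimate; it is imposed so that the initial-data space $\Bto$ in (\ref{I-1.16b}) is the one that does not recognize boundary conditions. (Your anisotropic route could presumably be made rigorous for $p<2$, but it imports machinery the paper neither develops nor needs, and the assertion that the estimate ``must'' be closed that way is false.) One last small point: estimate (\ref{I-8.19}) of Step 1 is literally the diagonal case $a=b=f$, so the bilinear bound is not ``exactly'' that estimate; what makes your invocation legitimate is that the chain (\ref{I-8.12})--(\ref{I-8.18}) is verbatim bilinear in the two factors, which is precisely what the paper says in Step 3 of its proof.
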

	
	\noindent The Banach contraction principle then establishes Theorem \ref{I-Thm-8.1}, once we prove Theorems \ref{I-Thm-8.2} and \ref{I-Thm-8.3}.\\
	
	\noindent \textbf{Proof of Theorem \ref{I-Thm-8.2}}.  \textit{Step 1}: We start from definition (\ref{I-8.5}) of $\calF$ and invoke the maximal regularity properties (\ref{I-7.14}) for $e^{\BA_{F,q}t}$ and (\ref{I-7.13}) for $\ds \int_{0}^{t} e^{\BA_{F,q}(t - \tau)} \calN_q f(\tau) d \tau$. We obtain from (\ref{I-8.5})
	
	\begin{align}
	\norm{\calF(z_0,f)(t)}_{\xipqs} &\leq \norm{e^{\BA_{F,q}t}z_0} _{\xipqs}+ \norm{\int_{0}^{t} e^{\BA_{F,q}(t - \tau)}\calN_q f(\tau) d \tau}_{\xipqs} \label{I-8.8}\\
	&\leq C \Big[ \norm{z_0}_{\Bto} + \norm{\calN_q f}_{L^p(0, \infty; \lso)} \Big]. \label{I-8.9} 
	\end{align}
	
	\noindent \textit{Step 2}: By the definition $\calN_q f = P_q [(f \cdot \nabla)f]$ in (\ref{I-1.11}), we estimate ignoring $\ds \norm{P_q}$ and using, $\ds \sup_{\cdot} \ \big[ \abs{g(\cdot)} \big]^r = [\sup_{\cdot} \ (\abs{g(\cdot)})]^r$
	
	\begin{align}
	\norm{\calN_q f}^p_{L^p(0,\infty;\lso)} &\leq \int_{0}^{\infty} \norm{P_q [(f \cdot \nabla)f]}^p_{\lso} dt \nonumber\\
	&\leq \int_{0}^{\infty} \bigg\{ \int_{\Omega} \abs{f(t,x)}^q \abs{ \nabla f(t,x)}^q d \Omega \bigg\}^{\rfrac{p}{q}} dt\\
	&\leq \int_{0}^{\infty} \bigg\{ \bigg[ \sup_{\Omega} \abs{ \nabla f(t, \cdot)}^q  \bigg]^{\rfrac{1}{q}} \bigg[ \int_{\Omega} \abs{f(t,x)}^{q} d \Omega \bigg]^{\rfrac{1}{q}}  \bigg\}^p dt\\
	&\leq \int_{0}^{\infty} \norm{\nabla f(t,\cdot)}^p_{L^{\infty}(\Omega)} \norm{f(t,\cdot)}^p_{\lso} dt \label{I-8.12}\\
	&\leq \sup_{0\leq t \leq \infty} \norm{f(t,\cdot)}^p_{\lso} \int_{0}^{\infty} \norm{\nabla f(t,\cdot)}^p_{L^{\infty}(\Omega)} dt\\
	&= \norm{f}^p_{L^{\infty}(0,\infty; \lso)} \norm{\nabla f}^p_{L^p(0,\infty; L^{\infty}(\Omega))}. \label{I-8.14}
	\end{align}
	
	\noindent \textit{Step 3}: The following embeddings hold true:
	\begin{enumerate}[(i)]
		\item \cite[Proposition 4.3, p 1406 with $\mu = 0, s = \infty, r = q$]{GGH:2012}  so that the required formula reduces to $1 \geq \rfrac{1}{p}$, as desired
		\begin{subequations}\label{I-8.15}
			\begin{align}
			f \in \xipqs \hookrightarrow f &\in L^{\infty}(0,\infty; \lso) \label{I-8.15a}\\
			\text{ so that, } \norm{f}_{L^{\infty}(0,\infty; \lso)} &\leq C\norm{f}_{\xipqs} \label{I-8.15b}
			\end{align}
		\end{subequations}
		\item \cite[Theorem 2.4.4, p 74 requiring $C^1$-boundary]{SK:1989}
		\begin{equation}\label{I-8.16}
		W^{1,q}(\Omega) \subset L^{\infty}(\Omega) \text{ for q}>\text{dim }\Omega = d, \ d = 2,3,
		\end{equation}
	\end{enumerate}
	
	\noindent so that, with $p>1, q>3$:
	\begin{align}
	\norm{\nabla f}^p_{L^p(0,\infty; L^{\infty}(\Omega))} &\leq C \norm{ \nabla f}^p_{L^p(0,\infty; W^{1,q}(\Omega))} \leq C \norm{f}^p_{L^p(0,\infty; W^{2,q}(\Omega))} \label{I-8.17}\\
	&\leq C \norm{f}^p_{\xipqs} \label{I-8.18}
	\end{align}
	
	\noindent In going from (\ref{I-8.17}) to (\ref{I-8.18}) we have recalled the definition of $f \in \xipqs$ in (\ref{I-1.28}), (\ref{I-7.13}), as $f$ was taken at the outset on $\ds \calD(\BA_{_{F,q}}) = \calD(\calA_q) \subset \lso$. Then, the sought-after final estimate of the non-linear term $\calN_q f, f \in \xipqs$ below (\ref{I-8.4}), is obtained from substituting (\ref{I-8.15b}) and (\ref{I-8.18}) into the RHS of (\ref{I-8.14}). We obtain 
	
	\begin{equation}\label{I-8.19}
	\norm{\calN_q f}_{L^p(0,\infty; \lso)} \leq C \norm{f}^2_{\xipqs}, \quad f \in \xipqs.
	\end{equation}
	
	\noindent Returning to (\ref{I-8.8}), we finally, obtain by (\ref{I-8.19})
	
	\begin{equation}\label{I-8.20}
	\norm{\calF (z_0, f)}_{\xipqs} \leq C \Big\{ \norm{z_0}_{\Bto} + \norm{f}^2_{\xipqs} \Big\}.
	\end{equation}
	
	\noindent \textit{Step 4}: We now impose the restrictions on the data on the RHS of (\ref{I-8.20}): $z_0$ is in a ball of radius $r_1 > 0$ in $\Bto$ and $f$ is in a ball of radius $r>0$ in $\xipqs$. We further demand that the final result $\calF(z_0,f)$ shall lie in a ball of radius $r$ in $\xipqs$. Thus we obtain from (\ref{I-8.20})
	
	\begin{equation}\label{I-8.21}
	\norm{\calF(z_0,f)}_{\xipqs} \leq C \Big\{ \norm{z_0}_{\Bto} + \norm{f}^2_{\xipqs} \Big\} \leq C(r_1 + r^2) \leq r
	\end{equation}
	
	\noindent This implies 
	\begin{equation}\label{I-8.22}
	Cr^2 - r + Cr_1 \leq 0 \quad \text{or} \quad \frac{1 - \sqrt{1-4C^2r_1}}{2C} \leq r \leq \frac{1 + \sqrt{1-4C^2r_1}}{2C}
	\end{equation}
	\noindent whereby 
	\begin{equation}\label{I-8.23}
	\begin{Bmatrix}
	\text{ range of values of r }
	\end{Bmatrix}
	\longrightarrow \text{ interval } \Big[ 0, \frac{1}{C} \Big], \text{ as } r_1 \searrow 0
	\end{equation}
	\noindent a constraint which is guaranteed by taking 
	\begin{equation}\label{I-8.24}
	r_1 \leq \frac{1}{4C^2},\ C \text{ being the constant in } (\ref{I-8.20}).
	\end{equation}
	\noindent We have thus established that by taking $r_1$ as in (\ref{I-8.24})  and subsequently $r$ as in (\ref{I-8.22}), then the map
	\begin{multline}\label{I-8.25}
	\calF(z_0, f) \text{ takes: }
	\begin{Bmatrix}
	\text{ ball in } \Bto \\
	\text{of radius } r_1
	\end{Bmatrix}
	\times 
	\begin{Bmatrix}
	\text{ ball in } \xipqs \\
	\text{of radius } r
	\end{Bmatrix}
	\text{ into }
	\begin{Bmatrix}
	\text{ ball in } \xipqs \\
	\text{of radius } r
	\end{Bmatrix},\\ \ d < q, \ 1 < p < \frac{2q}{2q-1}.
	\end{multline}
	\noindent This establishes Theorem \ref{I-Thm-8.2}. \qedsymbol \\
	
	\noindent \textbf{Proof of Theorem \ref{I-Thm-8.3}} \underline{Step 1}: For $f_1,f_2$ both in the ball of $\xipqs$ of radius $r$ obtained in the proof of Theorem \ref{I-Thm-8.2}, we estimate from (\ref{I-8.5}):
	\begin{align}
	\norm{ \calF(z_0,f_1) - \calF(z_0,f_2)}_{\xipqs} &= \norm{\int_{0}^{t} e^{\BA_{F,q}(t-\tau)} \big[ \calN_q f_1(\tau) - \calN_q f_2(\tau) \big] d \tau}_{\xipqs} \label{I-8.26}\\
	&\leq  \widetilde{m} \norm{\calN_q f_1 - \calN_q f_2}_{L^p(0,\infty;\lso)} \label{I-8.27}
	\end{align}
	\noindent after invoking the maximal regularity property (\ref{I-7.13}).\\
	
	\noindent \underline{Step 2}: Next recalling $\calN_qf = P_q [(f \cdot \nabla )f]$ from (\ref{I-1.11}), we estimate the RHS of (\ref{I-8.27}). In doing so, we add and subtract $(f_2 \cdot \nabla) f_1$, set $ \ds A = (f_1 \cdot \nabla) f_1 - (f_2 \cdot \nabla) f_1, \ B = (f_2 \cdot \nabla) f_1 - (f_2  \cdot \nabla) f_2,$ and use 
	\begin{equation*}
	\abs{A+B}^q \leq 2^q \big[\abs{A}^q + \abs{B}^q \big] \qquad (*). 
	\end{equation*}
	\noindent \cite[p 12]{TL:1980} We obtain, again ignoring $\norm{P_q}$:
	\begin{align}
	\norm{\calN_q f_1 - \calN_q f_2}_{L^p(0,\infty;\lso)} &\leq \int_{0}^{\infty} \bigg\{ \bigg[ \int_{\Omega} \abs{(f_1 \cdot \nabla) f_1 - (f_2 \cdot \nabla) f_2}^q d \Omega \bigg]^{\rfrac{1}{q}}\bigg\}^p dt\\
	&= \int_{0}^{\infty} \bigg[ \int_{\Omega} \abs{A+B}^q d \Omega \bigg]^{\rfrac{p}{q}}dt\\
	&\leq 2^q \int_{0}^{\infty} \bigg\{ \int_{\Omega} \big[\abs{A}^q + \abs{B}^q \big] d \Omega \bigg\}^{\rfrac{p}{q}}dt\\
	&= 2^q \int_{0}^{\infty} \bigg\{ \Big[ \int_{\Omega} \abs{A}^q d \Omega + \int_{\Omega} \abs{B}^q d \Omega   \Big]^{\rfrac{1}{q}} \bigg\}^p dt\\
	&= 2^q \int_{0}^{\infty} \bigg\{ \Big[ \norm{A}^q_{L^q(\Omega)} + \norm{B}^q_{L^q(\Omega)} \Big]^{\rfrac{1}{q}} \bigg\}^p dt\\ (\text{ by } (*)) \qquad
	&\leq 2^q \cdot 2^{\rfrac{1}{q}}\int_{0}^{\infty} \Big\{ \norm{A}_{L^q(\Omega)} + \norm{B}_{L^q(\Omega)} \Big\}^p dt\\ (\text{ by } (*)) \qquad
	&\leq 2^{p+q+\rfrac{1}{q}}\int_{0}^{\infty} \Big[ \norm{A}^p_{L^q(\Omega)} + \norm{B}^p_{L^q(\Omega)} \Big] dt\\ 
	&= 2^{p+q+\rfrac{1}{q}}\int_{0}^{\infty} \Big[ \norm{((f_1 - f_2)\cdot \nabla)f_1}^p_{L^q(\Omega)}\nonumber \\ &\hspace{5cm}+\norm{(f_2 \cdot \nabla)(f_1 - f_2 )}^p_{L^q(\Omega)} \Big] dt\\
	&= 2^{p+q+\rfrac{1}{q}}\int_{0}^{\infty} \Big\{ \norm{f_1 - f_2}^p_{L^q(\Omega)} \norm{\nabla f_1}^p_{L^q(\Omega)}\nonumber \\ &\hspace{5cm}+\norm{f_2}^p_{\lso} \norm{\nabla(f_1 - f_2)}^p_{L^q(\Omega)} \Big\} dt \label{I-8.36}
	\end{align}
	
	\noindent \underline{Step 3}: We now notice that regarding each of the integral term in the RHS of (\ref{I-8.36}) we are structurally and topologically as in the RHS of (\ref{I-8.12}), except that in (\ref{I-8.36}) the gradient terms $\nabla f_1, \nabla(f_1 - f_2)$ are penalized in the $\lso$-norm which is dominated by the $L^{\infty}(\Omega)$-norm, as it occurs for the gradient term $\nabla f$ in (\ref{I-8.12}). Thus we can apply to each integral term on the RHS of (\ref{I-8.36}) the same argument as in going from (\ref{I-8.12}) to the estimates (\ref{I-8.15b}) and (\ref{I-8.18}) with $q >$ dim $\Omega=3$. We obtain
	\begin{align}
	\norm{\calN_q f_1 - \calN_q f_2}^p_{L^p(0,\infty;\lso)} &\leq \text{RHS of (\ref{I-8.36})} \nonumber \\
	\text{see (\ref{I-8.14})} \hspace{3cm}&\leq C \Big\{ \norm{f_1 - f_2}^p_{L^{\infty}(0, \infty;\lso)} \norm{\nabla f_1}^p_{L^p(0, \infty;L^{\infty}(\Omega))}\nonumber \\ &\hspace{3cm} + \norm{f_2}^p_{L^{\infty}(0, \infty;\lso)} \norm{\nabla(f_1 - f_2)}^p_{L^p(0, \infty;L^{\infty}(\Omega)} \Big\}\\
	\text{see (\ref{I-8.15b}) and (\ref{I-8.18})} \qquad &\leq C \Big\{ \norm{f_1 - f_2}^p_{\xipqs} \norm{f_1}^p_{\xipqs} +  \norm{f_2}^p_{\xipqs} \norm{f_1 - f_2}^p_{\xipqs} \Big\}\\
	&= C \Big\{ \norm{f_1 - f_2}^p_{\xipqs} \big( \norm{f_1}^p_{\xipqs} + \norm{f_2}^p_{\xipqs}\big) \Big\} \label{I-8.39}
	\end{align}
	\noindent Finally (\ref{I-8.39}) yields
	\begin{align}
	\norm{\calN_q f_1 - \calN_q f_2}_{L^p(0,\infty;\lso)} &\leq C^{\rfrac{1}{p}} \norm{f_1 - f_2}_{\xipqs} \Big( \norm{f_1}^p_{\xipqs} + \norm{f_2}^p_{\xipqs} \Big)^{\rfrac{1}{p}}\\
	(\text{by } (*)) \qquad &\leq 2^{\rfrac{1}{p}} C^{\rfrac{1}{p}} \norm{f_1 - f_2}_{\xipqs} \Big( \norm{f_1}_{\xipqs} + \norm{f_2}_{\xipqs} \Big). \label{I-8.41}
	\end{align}
	\noindent \underline{Step 4}: Using estimate (\ref{I-8.41}) on the RHS of estimate (\ref{I-8.27}) yields
	\begin{equation}\label{I-8.42}
	\norm{\calF(z_0,f_1) - \calF(z_0,f_2)}_{\xipqs} \leq K_p \norm{f_1 - f_2}_{\xipqs} \Big( \norm{f_1}_{\xipqs} + \norm{f_2}_{\xipqs} \Big)
	\end{equation}
	\noindent $K_p = \widetilde{m} 2^{\rfrac{1}{p}}C^{\rfrac{1}{p}}$ ($\widetilde{m}$ as in (\ref{I-8.27}), $C$ as in (\ref{I-8.39})). Next, pick $f_1,f_2$ in the ball of $\xipqs$ of radius $R$:
	\begin{equation}
	\norm{f_1}_{\xipqs},\norm{f_2}_{\xipqs} \leq R.
	\end{equation}
	\noindent Then 
	\begin{equation}
	\norm{\calF(z_0,f_1) - \calF(z_0,f_2)}_{\xipqs} \leq \rho_0 \norm{f_1 - f_2}_{\xipqs}
	\end{equation}
	\noindent and $\calF(z_0,f)$ is a contraction on the space $\xipqs$ as soon as 
	\begin{equation}
	\rho_0 \equiv 2K_pR < 1 \text{ or } R < \rfrac{1}{2 K_p}, \ K_p = \widetilde{m} 2^{\rfrac{1}{p}} C^{\rfrac{1}{p}}. 
	\end{equation}
	\noindent In this case, the map $\calF(z_0,f)$ defined in (\ref{I-8.5}) has a fixed point $z$ in $\xipqs$
	\begin{equation}
	\calF(z_0,z) = z, \text{ or } z = e^{\BA_{F,q}t}z_0 - \int_{0}^{t}e^{\BA_{F,q}(t - \tau)}\calN_q z(\tau) d \tau
	\end{equation}
	\noindent and such fixed point $z \in \xipqs$ is the unique solution of the translated non-linear equation (\ref{I-8.1}), or (\ref{I-8.2}) with finite dimensional control $u$ in feedback form, as described by the RHS of (\ref{I-8.1}). Theorem \ref{I-Thm-8.1} is proved. \qedsymbol 
	
	\section{Proof of Theorem \ref{I-Thm-2.4}. Local exponential decay of the non-linear translated $z$-dynamics (\ref{I-8.1}) with finite dimensional localized feedback control}\label{I-Sec-9}
	
	\noindent In this section we return to the feedback problem (\ref{I-8.1}) rewritten equivalently as in (\ref{I-8.3})
	
	\begin{equation}\label{I-9.1}
	z(t) = e^{\BA_{F,q}t}z_0 - \int_{0}^{t}e^{\BA_{F,q}(t - \tau)} \calN_q z(\tau) d \tau.
	\end{equation} 
	\noindent For $z_0$ in a small ball of $\Bto$, Theorem \ref{I-Thm-8.1} provides a unique solution in a ball of $\xipqs$. We recall from (\ref{I-6.3}) = (\ref{I-8.4})
	\begin{equation}\label{I-9.2}
	\norm{e^{\BA_{F,q}t} z_0}_{\Bto} \leq M_{\gamma_0} e^{-\gamma_0 t} \norm{z_0}_{\Bto}, \ t \geq 0.
	\end{equation}
	\noindent Our goal now is to show that for $z_0$ in a small ball of $\Bto$, problem (\ref{I-9.1}) satisfies the exponential decay 
	\begin{equation}
	\norm{z(t)}_{\Bto} \leq C e^{-at} \norm{z_0}_{\Bto}, \ t \geq 0, \ \text{for some constants, } a>0, C = C_a \geq 1. \nonumber
	\end{equation}
	\noindent \underline{Step 1}: Starting from (\ref{I-9.1}) and using (\ref{I-9.2}) we estimate
	\begin{align}
	\norm{z(t)}_{\Bto} &\leq M_{\gamma_0} e^{-\gamma_0 t} \norm{z_0}_{\Bto} + \sup_{0\leq t \leq \infty} \norm{\int_{0}^{t}e^{\BA_{F,q}(t - \tau)} \calN_q z(\tau) d \tau}_{\Bto} \label{I-9.3}\\
	&\leq M_{\gamma_0} e^{-\gamma_0 t} \norm{z_0}_{\Bto} + C_1 \norm{\int_{0}^{t}e^{\BA_{F,q}(t - \tau)} \calN_q z(\tau) d \tau}_{\xipqs} \label{I-9.4}\\
	&\leq M_{\gamma_0} e^{-\gamma_0 t} \norm{z_0}_{\Bto} + C_2 \norm{\calN_q z}_{L^p(0,\infty;\lso)} \label{I-9.5}\\
	\norm{z(t)}_{\Bto} &\leq M_{\gamma_0} e^{-\gamma_0 t} \norm{z_0}_{\Bto} + C_3 \norm{z}^2_{\xipqs}, \quad C_3 = C_2C. \label{I-9.6}
	\end{align}
	\noindent In going from (\ref{I-9.3}) to (\ref{I-9.4}) we have recalled the embedding $\ds \xipqs \hookrightarrow L^{\infty}\big(0,\infty;\Bto \big)$ from (\ref{I-1.30}). Next, in going from (\ref{I-9.4}) to (\ref{I-9.5}) we have used the maximal regularity property (\ref{I-7.13}). Finally, to go from (\ref{I-9.5}) to (\ref{I-9.6}) we have invoked estimate (\ref{I-8.19}).\\
	
	\noindent \underline{Step 2}: We shall next establish that
	\begin{equation}\label{I-9.7}
	\norm{z}_{\xipqs} \leq M_1 \norm{z_0}_{\Bto} + K \norm{z}^2_{\xipqs}, \text{ hence } \norm{z}_{\xipqs} \big(1-K\norm{z}_{\xipqs} \big) \leq M_1 \norm{z_0}_{\Bto}
	\end{equation}
	\noindent In fact, to this end, we take the $\xipqs$-estimate of equation (\ref{I-9.1}). We obtain 
	\begin{equation}
	\norm{z}_{\xipqs} \leq \norm{e^{\BA_{F,q}t}z_0}_{\xipqs} + \norm{\int_{0}^{t} e^{\BA_{F,q}(t-\tau)}\calN_qz(\tau) d \tau}_{\xipqs}
	\end{equation}
	\noindent from which then (\ref{I-9.7}) follows by invoking the maximal regularity property (\ref{I-7.14}) on $e^{\BA_{F,q}t}$ as well as the maximal regularity estimate (\ref{I-7.13}) followed by use of of (\ref{I-8.19}), as in going from (\ref{I-9.4}) to (\ref{I-9.6})
	\begin{align}
	\norm{\int_{0}^{t} e^{\BA_{F,q}(t-\tau)}\calN_qz(\tau) d \tau}_{\xipqs} &\leq \wti{M} \norm{\calN_q z}_{L^p(0,\infty;\lso)} \label{I-9.9}\\
	&\leq \wti{M} C \norm{z}^2_{\xipqs}. \label{I-9.10}
	\end{align}
	\noindent Thus (\ref{I-9.7}) is proved with $K = \wti{M}C$ where $C$ is the same constant occurring in (\ref{I-8.19}), hence in (\ref{I-8.21}), (\ref{I-8.22}).\\
	
	\noindent \underline{Step 3}: The well-posedness Theorem \ref{I-Thm-8.1} says that
	\begin{equation}\label{I-9.11}
	\begin{Bmatrix}
	\text{ If } \norm{z_0}_{\Bto} \leq r_1 \\
	\text{for } r_1 \text{ sufficiently small}
	\end{Bmatrix}
	\implies
	\begin{Bmatrix}
	\text{ The solution } z \text{ satisfies} \\
	\norm{z}_{\xipqs} \leq r
	\end{Bmatrix}
	\end{equation}
	where $r$ satisfies the constraint (\ref{I-8.22}) in terms of $r_1$ and some constant $C$ in (\ref{I-8.19}) that occurs for $K = \wti{M}C$ in (\ref{I-9.10}). We seek to guarantee that we can obtain
	\begin{equation}\label{I-9.12}
	\begin{cases}
	\norm{z}_{\xipqs} \leq r < \frac{1}{2K} = \frac{1}{2 \wti{M} C} \ \Big( < \frac{1}{2C}\Big)\\
	\ \\
	\text{hence } \frac{1}{2} < 1 - K \norm{z}_{\xipqs},
	\end{cases}
	\end{equation}
	\noindent where w.l.o.g. we can take the maximal regularity constant $\wti{M}$ in (\ref{I-7.13}) to satisfy $\wti{M} > 1$. Again, the constant $C$ arises from application of estimate (\ref{I-8.19}). This is indeed possible by choosing $r_1 > 0$ sufficiently small. In fact, as $r_1 \searrow 0$, (\ref{I-8.23}) shows that the interval $r_{min} \leq r \leq r_{max}$ of corresponding values of $r$ tends to the interval $\ds \bigg[ 0, \frac{1}{C} \bigg]$. Thus (\ref{I-9.12}) can be achieved as $r_{min} \searrow 0$: $\ds 0 < r_{min} < r < \frac{1}{2 \wti{M} C} < \frac{1}{2C}$. Next, (\ref{I-9.12}) implies that (\ref{I-9.7}) holds true and yields then 
	\begin{equation}\label{I-9.13}
	\norm{z}_{\xipqs} \leq 2M_1 \norm{z_0}_{\Bto} \leq 2M_1 r_1.
	\end{equation}
	\noindent Substituting (\ref{I-9.13}) in estimate (\ref{I-9.6}) then yields with $\ds \widehat{M} = \max \{ M_{\gamma_0}, M_1 \}$
	\begin{align}
	\norm{z(t)}_{\Bto} &\leq M_{\gamma_0} e^{-\gamma_0 t} \norm{z_0}_{\Bto} + 4 C_3 M_1^2 \norm{z_0}^2_{\Bto}\\
	&= \widehat{M} \bigg[ e^{-\gamma_0 t} + 4 \widehat{M} C_3 \norm{z_0}_{\Bto} \bigg] \norm{z_0}_{\Bto} \label{I-9.15}\\
	\norm{z(t)}_{\Bto} &\leq \widehat{M} \big[ e^{-\gamma_0 t} + 4 \widehat{M} C_3 r_1 \big] \norm{z_0}_{\Bto}
	\end{align}
	\noindent recalling the constant $r_1 > 0$ in (\ref{I-9.11}).\\
	
	\noindent \underline{Step 4}: Now take $T$ sufficiently large and $r_1 > 0$ sufficiently small such that 
	\begin{equation}\label{I-9.17}
	\beta \equiv \widehat{M} e^{-\gamma_0 T} + 4 \widehat{M}^2 C_3 r_1 < 1.
	\end{equation}
	\noindent Then (\ref{I-9.15}) implies by (\ref{I-9.17})
	\begin{subequations}\label{I-9.18}
	\begin{align}
	\norm{z(T)}_{\Bto} &\leq \beta \norm{z_0}_{\Bto} \text{ and hence } \label{I-9.18a}\\
	\norm{z(nT)}_{\Bto} &\leq \beta \norm{z((n-1)T)}_{\Bto} \leq \beta^n \norm{z_0}_{\Bto}. \label{I-9.18b}
	\end{align}	
	\end{subequations}
	\noindent Since $\beta < 1$, the semigroup property of the evolution implies that there are constants $M_{\wti{\gamma}} \geq 1, \wti{\gamma} > 0$ such that
	\begin{equation}\label{I-9.19}
	\norm{z(t)}_{\Bto} \leq M_{\wti{\gamma}} e^{-\wti{\gamma} t} \norm{z_0}_{\Bto}, \quad t \geq 0
	\end{equation}
	\noindent This proves Theorem \ref{I-Thm-2.4}. \qedsymbol  
	
	\begin{rmk}\label{I-Rmk-9.1}
		The above computations - (\ref{I-9.17}) through (\ref{I-9.19}) - can be used to support qualitatively the intuitive expectation that ``the larger the decay rate $\gamma_0$ in (\ref{I-6.3}) = (\ref{I-8.4}) = (\ref{I-9.2}) of the linearized feedback $w$-dynamics (\ref{I-6.1}), the larger the decay rate $\wti{\gamma}$ in (\ref{I-9.19}) of the nonlinear feedback $z$-dynamics (\ref{I-2.20}) = (\ref{I-8.1}); hence the larger the rate $\wti{\gamma}$ in (\ref{I-2.29}) of the original $y$-dynamics in (\ref{I-2.28})".\\
		
		\noindent The following considerations are somewhat qualitative. Let $S(t)$ denote the non-linear semigroup in the space $\ds \Bto$, with infinitesimal generator $\ds \big[ \BA_{_{F,q}} - \calN_q \big]$ describing the feedback $z$-dynamics (\ref{I-2.20})=(\ref{I-8.1}), hence (\ref{I-8.2}), as guaranteed by the well posedness Theorem \ref{I-Thm-2.3} = Theorem \ref{I-Thm-8.1}. Thus, $\ds z(t;z_0) = S(t)z_0$ on $\ds \Bto$. By (\ref{I-9.17}), we can rewrite (\ref{I-9.18a}) as:
		\begin{equation}\label{I-9.20}
			\norm{S(T)}_{\calL \big(\Bto \big)} \leq \beta < 1.
		\end{equation}
		\noindent It follows from \cite[p 178]{Bal:1981} via the semigroup property that
		\begin{equation}\label{I-9.21}
			- \wti{\gamma} \ \ \text{is just below} \ \ \frac{\ln \beta}{T} < 0.
		\end{equation}
		\noindent Pick $r_1 > 0$ in (\ref{I-9.17}) so small that $4\widehat{M}^2 C_3 r_1$ is negligible, so that $\beta$ is just above $\ds \widehat{M} e^{- \gamma_0 T}$, so $\ln \beta$ is just above $\ds \big[ \ln \widehat{M} - \gamma_0 T \big]$, hence
		\begin{equation}\label{I-9.22}
			\frac{\ln \beta}{T} \text{ is just above } (-\gamma_0) + \frac{\ln \widehat{M}}{T}.
		\end{equation}
		\noindent Hence, by (\ref{I-9.21}), (\ref{I-9.22}),
		\begin{equation}\label{I-9.23}
			\wti{\gamma} \sim \gamma_0 - \frac{\ln \widehat{M}}{T}
		\end{equation}
		\noindent and the larger $\gamma_0$, the larger is $\wti{\gamma}$, as desired.
	\end{rmk}
	
	\section{Well-posedness of the pressure $\chi$ for the $z$-problem (\ref{I-1.7}) in feedback form, and of the pressure $\pi$ for the $y$-problem (\ref{I-1.1}) in feedback form (\ref{I-2.22}) in the vicinity of the equilibrium pressure $\pi_e$.}\label{I-Sec-10}
	
	\noindent \underline{The $z$-problem in feedback form:} We return to the translated $z$ problem (\ref{I-1.7}), with $\L_e(z)$ given by (\ref{I-1.39})	
	\begin{subequations}\label{I-10.1}
		\begin{align}
		z_t - \nu \Delta z + L_e(z) + (z \cdot \nabla) z + \nabla \chi &= m(Fz)   &\text{ in } Q \label{I-10.1a}\\ 
		\div \ z &= 0   &\text{ in } Q \\
		z &= 0 &\text{ on } \Sigma\\
		z(0,x) &= y_0(x) - y_e(x) &\text{ on } \Omega
		\end{align}	
	\noindent with $Fz$ given in the feedback form as in (\ref{I-2.20}) = (\ref{I-8.1})	
	\begin{equation}
		m(Fz) = m \Bigg( \sum_{k = 1}^K \big( z_N, p_k \big)_{\omega} u_k \Bigg), \quad z_N = P_N z
	\end{equation}
	\end{subequations}
	\noindent for which Theorem \ref{I-Thm-2.3} = Theorem \ref{I-Thm-8.1} provides a local well-posedness result (\ref{I-2.22}), (\ref{I-2.23}) for the $z$ variable. We now complement such well-posedness for $z$ with a corresponding local well-posedness result for the pressure $\chi$.
	
	\begin{thm}
		Consider the setting of Theorem \ref{I-Thm-2.3} = Theorem \ref{I-8.1} for problem (\hyperref[I-10.1a]{10.1a-e}). Then the following well-posedness result for the pressure $\chi$ holds true, where we recall the spaces $\ds \yipq$ for $T = \infty$ and $\ds \widehat{W}^{1,q}(\Omega)$ in (\ref{I-1.29}) as well as the steady state pressure $\pi_e$ from Theorem \ref{I-Thm-1.1}:
		\begin{equation}\label{I-10.2}
		\norm{\chi}_{\yipq} \leq \wti{C} \norm{y_0 - y_e}_{\Bto} \Big\{ \norm{y_0 - y_e}_{\Bto} + 1  \Big\}.
		\end{equation}
	\end{thm}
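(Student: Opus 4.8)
The plan is to regard the nonlinear feedback $z$-problem (\ref{I-10.1}) as a non-homogeneous dynamic Stokes problem in which the Oseen term $L_e(z)$, the nonlinearity $(z\cdot\nabla)z$, and the finite-dimensional feedback term $m(Fz)$ are all transferred to the right-hand side as a single forcing, so that $\chi$ is exactly the associated Stokes pressure. Taking $\nu=1$ as in Section \ref{I-Sec-7}, equation (\ref{I-10.1a}) becomes
\[
z_t - \Delta z + \nabla \chi = h, \qquad h := m(Fz) - L_e(z) - (z\cdot\nabla)z,
\]
with $\div z = 0$, $z|_{\Sigma}=0$, $z(0)=z_0 = y_0 - y_e$. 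Theorem \ref{I-Thm-8.1} (= Theorem \ref{I-Thm-2.3}) already produces the velocity $z \in \xipqs$, and the estimate (\ref{I-9.13}) gives the quantitative bound $\norm{z}_{\xipqs} \le 2M_1 \norm{z_0}_{\Bto}$. Thus the entire task reduces to constructing $\chi$ and bounding it in $\yipq$ by $\norm{z}_{\xipqs}$ and $\norm{z_0}_{\Bto}$.

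First I would establish existence of the pressure together with a pointwise-in-time identity for $\nabla\chi$. Since $z$ solves the abstract equation (\ref{I-8.2}), applying $P_q$ to $h$ and using $A_q z = -P_q\Delta z$, $A_{o,q}z = P_q L_e(z)$, $\calN_q z = P_q[(z\cdot\nabla)z]$, $P_q(Gz)=P_q[m(Fz)]$ and $P_q z_t = z_t$ shows that $P_q\big[z_t - \Delta z + L_e(z) + (z\cdot\nabla)z - m(Fz)\big]=0$. Hence the residual lies in the kernel $G^q(\Omega)$ of $P_q$, so by the Helmholtz decomposition there is a unique $\chi \in \widehat{W}^{1,q}(\Omega)$ (up to constants) with
\[
\nabla \chi = m(Fz) - z_t + \Delta z - L_e(z) - (z\cdot\nabla)z .
\]
Equivalently, one may split $h = P_q h + (I-P_q)h$, apply the pressure part of the Stokes maximal regularity Theorem \ref{I-Thm-1.5} (valid up to $T=\infty$, estimate (\ref{I-1.33})) to the solenoidal part $P_q h$, and absorb the gradient part $(I-P_q)h$ directly into the pressure. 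Either way, recalling from (\ref{I-1.29}) that the $\widehat{W}^{1,q}(\Omega)$-norm is equivalent to $\norm{\nabla\,\cdot\,}_{L^q(\Omega)}$, it remains to estimate $\norm{\nabla\chi}_{L^p(0,\infty;L^q(\Omega))}$ termwise.

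Each term is controlled by the velocity regularity already in hand. The terms $\norm{z_t}_{L^p(0,\infty;L^q(\Omega))}$ and $\norm{\Delta z}_{L^p(0,\infty;L^q(\Omega))}$ are dominated by $\norm{z}_{\xipqs}$ directly from the definition (\ref{I-1.28}); the feedback term $m(Fz)$ is controlled by $\norm{z}_{\xipqs}$ because $G$ in (\ref{I-7.2}) is bounded; and $L_e(z)$ is controlled by $\norm{z}_{\xipqs}$ since $A_{o,q}$ is relatively bounded with respect to $A_q^{1/2}$. The nonlinear term is treated exactly as in Step 2 of the proof of Theorem \ref{I-Thm-8.2}: dropping the harmless $P_q$, the chain (\ref{I-8.12})--(\ref{I-8.14}) together with the embeddings (\ref{I-8.15b}) and (\ref{I-8.18}) yields the un-projected analogue of (\ref{I-8.19}), namely $\norm{(z\cdot\nabla)z}_{L^p(0,\infty;L^q(\Omega))} \le C\norm{z}^2_{\xipqs}$. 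Collecting these gives $\norm{\chi}_{\yipq} \le C\big(\norm{z}_{\xipqs} + \norm{z}^2_{\xipqs}\big)$, and substituting $\norm{z}_{\xipqs}\le 2M_1\norm{z_0}_{\Bto}$ with $z_0 = y_0 - y_e$ produces the asserted bound (\ref{I-10.2}).

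The main obstacle is the bookkeeping at $T=\infty$ combined with the fact that $h$ is not divergence free: the pressure genuinely depends on the gradient component $(I-P_q)h$, which the solenoidal maximal-regularity estimate (\ref{I-1.33}) does not detect. One must therefore either split $h$ and handle its gradient part through the Helmholtz decomposition, or—as above—reconstruct $\nabla\chi$ directly from the equation and verify, using that $z_t\in\lso$ and that $z$ solves the projected problem (\ref{I-8.2}), that the residual indeed lies in $G^q(\Omega)$ so that a genuine $\widehat{W}^{1,q}$-potential exists. Once this point is settled, the remaining estimates are a routine repetition of the nonlinear bounds already proved for the velocity field.
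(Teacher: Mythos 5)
Your proposal is correct, and it reaches (\ref{I-10.2}) by the same basic mechanism as the paper: treat (\ref{I-10.1}) as a forced Stokes problem, bound the forcing terms $m(Fz)$, $L_e(z)$, $(z\cdot\nabla)z$ in $L^p(0,\infty;L^q(\Omega))$ linearly (respectively quadratically) in $\norm{z}_{\xipqs}$ --- the quadratic bound being exactly the argument (\ref{I-8.12})--(\ref{I-8.19}) --- and then convert $\norm{z}_{\xipqs}$ into $\norm{z_0}_{\Bto}$ via (\ref{I-9.13}). The genuine difference is in how the pressure is extracted. The paper applies the combined velocity--pressure maximal regularity estimate (\ref{I-1.33}) with $F_\sigma = P_q\big[m(Fz) - L_e(z) - (z\cdot\nabla)z\big]$ and obtains the joint bound (\ref{I-10.3}) in one stroke; you instead reconstruct $\nabla\chi$ pointwise in time from the equation (verifying through the projected equation (\ref{I-8.2}) that the residual lies in $G^q(\Omega)$, so that a $\widehat{W}^{1,q}(\Omega)$-potential exists) and estimate it termwise, which additionally requires the bounds on $z_t$ and $\Delta z$, both available from $z\in\xipqs$. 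Your route is slightly longer, but it makes explicit a point that the paper's estimate (\ref{I-10.3}) leaves tacit: the forcing $h = m(Fz) - L_e(z) - (z\cdot\nabla)z$ is not solenoidal, and the solenoidal maximal regularity estimate driven by $F_\sigma = P_q h$ controls, strictly speaking, the pressure of the problem forced by $P_q h$; the pressure $\chi$ of (\ref{I-10.1a}) differs from that one by the potential of $(I-P_q)h$, which must be estimated separately --- and can be, by the very same termwise bounds, since $I-P_q$ is bounded on $L^q(\Omega)$ under Assumption (H-D). In this sense your more pedestrian reconstruction is the cleaner justification of (\ref{I-10.2}); both arguments produce the same structure $\norm{z_0}_{\Bto}\big(1+\norm{z_0}_{\Bto}\big)$ in the final estimate.

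One small correction to your termwise step: the relative boundedness of $A_{o,q}$ with respect to $A_q^{\rfrac{1}{2}}$ (see below (\ref{I-1.9})) concerns the projected operator $A_{o,q} = P_q L_e$, so it does not directly bound the unprojected term $L_e(z)$ appearing in your identity for $\nabla\chi$. For that term use instead the direct H\"{o}lder estimates $\norm{(y_e\cdot\nabla)z}_{L^q(\Omega)} \leq \norm{y_e}_{L^{\infty}(\Omega)}\norm{\nabla z}_{L^q(\Omega)}$ and $\norm{(z\cdot\nabla)y_e}_{L^q(\Omega)} \leq \norm{z}_{L^{\infty}(\Omega)}\norm{\nabla y_e}_{L^q(\Omega)}$, which are available because $y_e \in W^{2,q}(\Omega)$, $z(t)\in \calD(A_q)\subset W^{2,q}(\Omega)$ and $q > d$ gives $W^{1,q}(\Omega)\hookrightarrow L^{\infty}(\Omega)$ as in (\ref{I-8.16}); this again yields a bound by $C\norm{z}_{\xipqs}$ with $C$ depending on $\norm{f}_{L^q(\Omega)}$ through (\ref{I-10.6}).
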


\begin{proof}
	We first apply the full maximal-regularity (\ref{I-1.33}) to the Stokes component of problem (\ref{I-10.1}) with $F_q = P_q \big( mF(z) - L_e(z) - (z \cdot \nabla)z \big)$ to obtain
	\begin{align}
	\norm{z}_{\xipqs} + \norm{\chi}_{\yipq} &\leq C \Big\{ \norm{ P_q [m(Fz) - (z \cdot \nabla) z - L_e(z)]}_{\lplqs} + \norm{z_0}_{\Bto}  \Big\} \nonumber\\
	&\leq C \Big\{ \norm{P_q [m(Fz)]}_{\lplqs} + \norm{P_q (z \cdot \nabla) z}_{\lplqs} \nonumber\\
	& \hspace{4cm}+\norm{P_q L_e(z)}_{\lplqs} + \norm{z_0}_{\Bto}  \Big\}. \label{I-10.3}
	\end{align}
	
	\noindent But $P_q [m F(z)] = m F(z)$ as the vectors $u_k$ in the definition of $F$ in (\ref{I-2.26}) are $\ds u_k \in W^u_N \subset \lso $. Moreover $F \in \calL (\lso)$, we obtain
	\begin{equation}\label{I-10.4}
	\norm{P_q[m(Fz)]}_{\lplqs} \leq C_1 \norm{z}_{\xipqs}, 
	\end{equation} 
	\noindent recalling the space $\ds \xipqs$ from (\ref{I-1.28}). Next, recalling (\ref{I-8.19}) for $\ds \calN_q z = P_q \big[ (z \cdot \nabla) z\big]$, see (\ref{I-1.11}), we obtain
	\begin{equation}\label{I-10.5}
	\norm{P_q (z \cdot \nabla) z}_{\lplqs} \leq C_2 \norm{z}^2_{\xipqs}.
	\end{equation} 
	\noindent The equilibrium solution $\{y_e,\pi_e\}$ is given by Theorem \ref{I-Thm-1.1} as satisfying 
	\begin{equation}\label{I-10.6}
		\norm{y_e}_{W^{2,q}(\Omega)} + \norm{\pi_e}_{\widehat{W}^{q,1}} \leq c \norm{f}_{L^q(\Omega)}, \quad 1 < q < \infty.
	\end{equation}
	\noindent We next estimate the term $\ds P_q L_e(z) = P_q [(y_e \cdot \nabla)z + (z \cdot \nabla) y_e]$ in (\ref{I-10.3})
	\begin{align}
	\norm{P_q L_e(z)}_{\lplqs} &= \norm{P_q (y_e . \nabla) z + P_q (z. \nabla) y_e }_{\lplqs}\\
	&\leq \norm{P_q (y_e . \nabla) z}_{\lplqs} + \norm{P_q (z. \nabla) y_e }_{\lplqs}\\
	&\leq \norm{y_e}_{L^q(\Omega)} \norm{\nabla z}_{\lplqs} + \norm{z}_{\lplqs} \norm{\nabla y_e}_{L^q(\Omega)}\\
	&\leq 2C_2 \norm{f}_{L^q(\Omega)} \norm{z}_{\lplqs}\\
	&\leq C_3 \norm{z}_{\xipqs} \label{I-10.11}
	\end{align}
	\noindent with the constant $C_3$ depending on the $L^q(\Omega)$-norm of the datum $f$. Setting now $C_4 = C \cdot \max \{ C_1, C_2, C_3 \} $ and substituting (\ref{I-10.4}), (\ref{I-10.5}), (\ref{I-10.11}) in (\ref{I-10.3}), we obtain
	\begin{equation}\label{I-10.12}
	\norm{z}_{\xipqs} + \norm{\chi}_{\yipq} \leq C_4 \Big\{ \norm{z}^2_{\xipqs} + 2\norm{z}_{\xipqs} + \norm{z_0}_{\Bto}  \Big\}
	\end{equation}
	\noindent Next we drop the term $\ds \norm{z}_{\xipqs}$ on the left hand side of (\ref{I-10.12}) and invoking (\ref{I-9.13}) to estimate $\ds \norm{z}_{\xipqs}$. Thus we obtain
	\begin{align}
	\norm{\chi}_{\yipq} &\leq C_5 \Big\{ \norm{z_0}^2_{\Bto} + 2\norm{z_0}_{\Bto} + \norm{z_0}_{\Bto}  \Big\}\\
	&\leq \wti{C} \norm{z_0}_{\Bto} \Big\{ \norm{z_0}_{\Bto} + 1  \Big\}, \quad \wti{C} = 3C_5 \label{I-10.14}
	\end{align}
	\noindent and (\ref{I-10.14}) proves (\ref{I-10.2}), as desired, recalling (\ref{I-1.7e}). 
\end{proof}	
	
	\noindent \underline{The $y$-problem in feedback form} We return to the original $y$-problem however in feedback form as in (\ref{I-2.26}), (\ref{I-2.27}), for which Theorem \hyperref[I-Thm-2.5.i]{2.5(i)} proves a local well-posedness result. We now complement such well-posedness result for $y$ with the corresponding local well-posedness result for the pressure $\pi$.

	\begin{thm}\label{I-Thm-10.2}
		Consider the setting of Theorem \ref{I-Thm-2.5} for the $y$-problem in (\ref{I-2.27}). Then, the following well-posedness result for the pressure $\pi$ holds true:
		\begin{align}
			\norm{\pi - \pi_e}_{\ytpq} &\leq \norm{\pi - \pi_e}_{\yipq} \leq \wti{C} \norm{y_0 - y_e}_{\Bto} \Big\{ \norm{y_0 - y_e}_{\Bto} + 1  \Big\} \label{I-10.15}\\
			&\leq \widehat{C} \Big\{ \norm{y_0}_{\Bto} + \norm{y_e}_{W^{2,q}(\Omega)} \Big\} \Big\{ \norm{y_0}_{\Bto} + \norm{y_e}_{W^{2,q}(\Omega)} + 1  \Big\} \label{I-10.16}\\
			&\leq \widehat{C} \Big\{ \norm{y_0}_{\Bto} + \norm{f}_{L^q(\Omega)} \Big\} \Big\{ \norm{y_0}_{\Bto} + \norm{f}_{L^q(\Omega)} + 1  \Big\} \label{I-10.17}
		\end{align}
		\begin{align}
			\norm{\pi}_{\ytpq} &\leq \widehat{C} \norm{y_0 - y_e}_{\Bto} \Big\{ \norm{y_0 - y_e}_{\Bto} + 1  \Big\} + cT^{\rfrac{1}{p}} \norm{\pi_e}_{\widehat{W}^{1,q}(\Omega)}, \ 0 < T < \infty\\
			&\leq \widehat{C} \Big\{ \norm{y_0}_{\Bto} + \norm{f}_{L^q(\Omega)} \Big\} \Big\{ \norm{y_0}_{\Bto} + \norm{f}_{L^q(\Omega)} + 1  \Big\} \nonumber\\
			&\hspace{8cm}+ cT^{\rfrac{1}{p}} \norm{f}_{L^q(\Omega)}, \ 0 < T < \infty.
		\end{align}
	\end{thm}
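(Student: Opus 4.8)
The plan is to reduce everything to the already-established pressure estimate (\ref{I-10.2}) for the translated $z$-problem, exploiting the translation identities $z = y - y_e$, $\chi = \pi - \pi_e$ from (\ref{I-1.7a}). Since the $y$-problem (\ref{I-2.27}) and the $z$-problem (\ref{I-10.1}) differ only by the time-independent equilibrium pair $\{y_e,\pi_e\}$, and since the feedback term $m(F(y-y_e)) = m(Fz)$ is literally the same in both formulations, the pressure of the $y$-problem is exactly $\pi = \chi + \pi_e$, with $\chi$ the fluctuation pressure governed by (\ref{I-10.1}). This identity organizes the whole argument.

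First I would establish the chain (\ref{I-10.15})--(\ref{I-10.17}) for $\pi - \pi_e = \chi$. The first inequality $\norm{\pi - \pi_e}_{\ytpq} \leq \norm{\pi - \pi_e}_{\yipq}$ is immediate, since restricting the time interval from $[0,\infty)$ to $[0,T]$ only decreases the $L^p$-in-time norm defining the spaces in (\ref{I-1.29}). The second inequality is precisely (\ref{I-10.2}), read with $z_0 = y_0 - y_e$. To reach (\ref{I-10.16}) I would invoke the triangle inequality $\norm{y_0 - y_e}_{\Bto} \leq \norm{y_0}_{\Bto} + \norm{y_e}_{\Bto}$ together with the embedding $W^{2,q}(\Omega) \subset \Bso$ from (\ref{I-1.14b}); since $y_e \in \calD(A_q)$ is divergence-free and vanishes on $\Gamma$, hence satisfies $y_e \cdot \nu|_{\Gamma} = 0$, it lies in $\Bto$ with $\norm{y_e}_{\Bto} \leq C\norm{y_e}_{W^{2,q}(\Omega)}$. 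Finally (\ref{I-10.17}) follows by bounding $\norm{y_e}_{W^{2,q}(\Omega)} \leq c\norm{f}_{L^q(\Omega)}$ via the steady-state estimate (\ref{I-10.6}) of Theorem \ref{I-Thm-1.1}.

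The estimates for the full pressure $\norm{\pi}_{\ytpq}$ then follow by splitting $\pi = \chi + \pi_e$ and using $\norm{\pi}_{\ytpq} \leq \norm{\chi}_{\ytpq} + \norm{\pi_e}_{\ytpq}$. The fluctuation term $\norm{\chi}_{\ytpq} \leq \norm{\chi}_{\yipq}$ is controlled by (\ref{I-10.2}) exactly as above. The new ingredient is the equilibrium contribution: because $\pi_e$ is independent of $t$, the definition (\ref{I-1.29}) gives $\norm{\pi_e}_{\ytpq} = \big( \int_0^T \norm{\pi_e}_{\widehat{W}^{1,q}(\Omega)}^p \, dt \big)^{\rfrac{1}{p}} = T^{\rfrac{1}{p}} \norm{\pi_e}_{\widehat{W}^{1,q}(\Omega)}$, which produces the $cT^{\rfrac{1}{p}}$ factor; a further use of (\ref{I-10.6}) to bound $\norm{\pi_e}_{\widehat{W}^{1,q}(\Omega)} \leq c\norm{f}_{L^q(\Omega)}$ then yields the final displayed estimate.

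The one genuine subtlety---and the reason the statement carries $0 < T < \infty$ for $\norm{\pi}_{\ytpq}$ while $\norm{\pi - \pi_e}$ is controlled up to $T = \infty$---is precisely this equilibrium term: unlike $\chi$, the constant-in-time $\pi_e$ does not belong to $\yipq$ unless $\pi_e \equiv 0$, since $\int_0^\infty \norm{\pi_e}_{\widehat{W}^{1,q}(\Omega)}^p \, dt$ diverges. Thus the decay machinery of Sections \ref{I-Sec-8}--\ref{I-Sec-9} applies cleanly only to the fluctuation $\chi$, and the full pressure inherits a finite-horizon estimate with explicit $T^{\rfrac{1}{p}}$-growth. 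No analytic obstacle remains beyond this bookkeeping; the entire weight of the argument rests on the previously proven estimate (\ref{I-10.2}) and the steady-state bound (\ref{I-10.6}).
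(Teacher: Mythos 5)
Your proposal is correct and follows essentially the same route as the paper: reduce to the fluctuation estimate (\ref{I-10.2}) via $\chi = \pi - \pi_e$, pass to (\ref{I-10.16}) by the triangle inequality with $\norm{y_e}_{W^{2,q}(\Omega)}$, and conclude (\ref{I-10.17}) from the steady-state bound (\ref{I-10.6}). You are in fact slightly more complete than the paper, which leaves the final two $\norm{\pi}_{\ytpq}$ estimates implicit, whereas you explicitly carry out the split $\pi = \chi + \pi_e$ and the computation $\norm{\pi_e}_{\ytpq} = T^{\rfrac{1}{p}}\norm{\pi_e}_{\widehat{W}^{1,q}(\Omega)}$ that explains the finite-horizon restriction.
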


\begin{proof}
	We return to the estimate (\ref{I-10.2}) for $\chi$ and recall $\chi = \pi - \pi_e$ from (\ref{I-1.7e}) to obtain (\ref{I-10.15}). We next estimate $y-y_e$ by 
	\begin{equation}
	\norm{y_0 - y_e}_{\Bto} \leq C  \big\{ \norm{y_0}_{\Bto} + \norm{y_e}_{W^{2,q}(\Omega)} \big\}.
	\end{equation}
	\noindent which substituted in (\ref{I-10.15}) yields (\ref{I-10.16}). In turn, (\ref{I-10.16}) leads to (\ref{I-10.17}) by means of (\ref{I-10.6}).
\end{proof}

	\begin{appendices}
	\renewcommand{\appendixname}{Appendix}
	\section{On Helmholtz Decomposition}\label{I-app-A}
	\setcounter{equation}{0}
	\setcounter{theorem}{1}
	\renewcommand{\theequation}{{\rm A}.\arabic{equation}}
	\renewcommand{\thetheorem}{{\bf A}.\arabic{theorem}}
	\renewcommand{\theprop}{{\bf A}.\arabic{theorem}}
	\renewcommand{\thermk}{{\bf A}.\arabic{theorem}}
	\noindent We return to the Helmholtz decomposition in (\ref{I-1.4}), (\ref{I-1.5}) and provide additional information.
	
	\noindent For $M \subset L^q(\Omega), \ 1 < q < \infty$, we denote the annihilator of $M$ by
	
	\begin{equation}
	M^{\perp} = \bigg \{ f \in L^{q'}(\Omega) : \int_{\Omega} fg \ d \Omega = 0, \text{ for all } g \in M \bigg \}
	\end{equation}
	\noindent where $q'$ is the dual exponent of $\ds q: \ \rfrac{1}{q} + \rfrac{1}{q'} = 1$.
	
	\begin{prop}\cite[Prop 2.2.2 p6]{HS:2016}, \cite[Ex. 16 p115]{Ga:2011}\label{I-Prop-1.2}, \cite{FMM:1998} \ \\
		Let $\Omega \subset \BR^d$ be an open set and let $1 < q < \infty$.
		\begin{enumerate}[a)]
			\item The Helmholtz decomposition exists for $L^q(\Omega)$ if and only if it exists for $L^{q'}(\Omega)$, and we have: (adjoint of $P_q$) = $P^*_q = P_{q'}$ (in particular $P_2$ is orthogonal), where $P_q$ is viewed as a bounded operator $\ds L^q(\Omega) \longrightarrow L^q(\Omega)$, and $\ds P^*_q = P_{q'}$ as a bounded operator $\ds L^{q'}(\Omega) \longrightarrow L^{q'}(\Omega), \ \rfrac{1}{q} + \rfrac{1}{q'} = 1$.
			\item Then, with reference to (\ref{I-1.5})
			\begin{subequations}
				\begin{equation}
				\Big[ \lso \Big]^{\perp} = G^{q'}(\Omega) \text{ and } \Big[ G^q(\Omega) \Big]^{\perp} = L^{q'}_{\sigma}(\Omega). \label{I-A.2a}
				\end{equation}
				\begin{rmk}
					\noindent Throughout the paper we shall use freely that
					\begin{equation}
					\big( \lso \big)' = \lo{q'}, \quad \frac{1}{q} + \frac{1}{q'} = 1. \label{I-A.2b}
					\end{equation}
					\noindent Thus can be established as follows. From (\ref{I-1.5}) write $\ds \lso$ as a factor space $\ds \lso = L^q(\Omega) / G^q(\Omega) \equiv X/M$ so that \cite[p 135]{TL:1980}.
					\begin{equation}
					\big( \lso \big)' = \big( L^q(\Omega) / G^q(\Omega) \big)' =  \big( X/M \big)' = M^{\perp} = \Big[ G^q(\Omega) \Big]^{\perp} = \lo{q'}. \label{I-A.2c}
					\end{equation}
					\noindent In the last step, we have invoked (\ref{I-A.2a}), which is also established in \cite[Lemma 2.1, p 116]{Ga:2011}. Similarly
					\begin{equation}
					\big( G^q(\Omega) \big)' = \big( L^q(\Omega) / \lso \big)' = \Big[ \lso \Big]^{\perp} = G^{q'}(\Omega).
					\end{equation}
				\end{rmk}
			\end{subequations}
			
		\end{enumerate}
	\end{prop}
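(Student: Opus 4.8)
The plan is to establish part b) first, because the two annihilator identities it asserts hold \emph{unconditionally} --- they do not presuppose that a Helmholtz decomposition exists --- and then to read off part a) from part b) via the duality theory of bounded projections on the reflexive space $L^q(\Omega)$, $1<q<\infty$. Throughout, the pairing is the standard one $\langle f,g\rangle = \int_\Omega f\cdot g\,d\Omega$ identifying $\big(L^q(\Omega)\big)' = L^{q'}(\Omega)$.

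\textbf{Part b).} Each identity is proved by a two-sided inclusion. For $\lo{q'} \subseteq \big[G^q(\Omega)\big]^{\perp}$, take $g\in\lo{q'}$ and $\nabla p\in G^q(\Omega)$; by the intrinsic characterization (\ref{I-1.4}) of $\lo{q'}$ (namely $\text{div}\,g=0$ and $g\cdot\nu=0$ on $\partial\Omega$) an integration by parts gives $\int_\Omega g\cdot\nabla p\,d\Omega = -\int_\Omega(\text{div}\,g)p\,d\Omega + \int_{\partial\Omega}(g\cdot\nu)p\,d\sigma = 0$. Symmetrically, for $G^{q'}(\Omega)\subseteq\big[\lso\big]^{\perp}$, take $\nabla p\in G^{q'}(\Omega)$ and a divergence-free $u\in C_c^\infty(\Omega)$: then $\int_\Omega\nabla p\cdot u = -\int_\Omega p\,\text{div}\,u = 0$, and since $\nabla p$ defines a continuous functional on $L^q(\Omega)$, this extends from the dense set of such $u$ to all of $\lso$ by the definition (\ref{I-1.4}) of $\lso$ as a $\norm{\cdot}_q$-closure. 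The reverse inclusion $\big[G^q(\Omega)\big]^{\perp}\subseteq\lo{q'}$ is obtained by testing: if $f\in L^{q'}(\Omega)$ annihilates every $\nabla p\in G^q(\Omega)$, then choosing $p\in C_c^\infty(\Omega)$ forces $\text{div}\,f=0$ distributionally, and choosing $p$ that do not vanish near $\partial\Omega$ recovers the weak normal condition $f\cdot\nu=0$, so $f\in\lo{q'}$. The remaining inclusion $\big[\lso\big]^{\perp}\subseteq G^{q'}(\Omega)$ is the genuinely deep point: a functional $f\in L^{q'}(\Omega)$ annihilating every divergence-free test field must be a gradient --- the $L^{q'}$-form of de Rham's theorem --- and granting it, $f=\nabla p\in G^{q'}(\Omega)$, the space being closed.

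\textbf{Part a).} Suppose the decomposition (\ref{I-1.5}) holds for $L^q(\Omega)$, so $P_q$ is a bounded projection with $\rng P_q=\lso$ and $\ker P_q=G^q(\Omega)$. As $L^q(\Omega)$ is reflexive, $P_q^*$ is a bounded projection on $L^{q'}(\Omega)$, and the standard rule for adjoints of projections gives $\rng(P_q^*)=\big[\ker P_q\big]^{\perp}=\big[G^q(\Omega)\big]^{\perp}$ and $\ker(P_q^*)=\big[\rng P_q\big]^{\perp}=\big[\lso\big]^{\perp}$. By part b) these are precisely $\lo{q'}$ and $G^{q'}(\Omega)$, so $P_q^*$ is a bounded projection of $L^{q'}(\Omega)$ onto $\lo{q'}$ along $G^{q'}(\Omega)$ --- which is the defining property of $P_{q'}$ in Definition \ref{I-Def-1.1}. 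Hence the Helmholtz decomposition exists for $L^{q'}(\Omega)$ with $P_q^*=P_{q'}$; the case $q=2$ then reads $P_2^*=P_2$, i.e. orthogonality. The converse implication is the same argument with $q,q'$ interchanged, using $q''=q$ and $P_{q'}^*=P_q$.

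\textbf{Expected main obstacle.} Every step except one is soft: the two easy annihilator inclusions are mere integration by parts, and part a) is pure Banach-space bookkeeping resting on reflexivity and on part b). The single hard ingredient is the de Rham characterization $\big[\lso\big]^{\perp}\subseteq G^{q'}(\Omega)$, which encodes that a gradient is the only obstruction to solenoidality in the dual pairing. I would therefore quarantine it as a separately cited lemma (as the authors do through \cite{FMM:1998}, \cite{Ga:2011}) and assemble the rest of the proof around it.
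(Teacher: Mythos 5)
Your proposal is correct in substance, but it takes a genuinely different route from the paper, which in fact offers no proof of parts a) and b) at all: those are imported wholesale from \cite[Prop 2.2.2 p6]{HS:2016}, \cite[Ex. 16 p115]{Ga:2011}, \cite{FMM:1998}, and the only argument the paper itself supplies is the one inside the Remark, namely the identification $\big( \lso \big)' = \lo{q'}$ obtained by writing $\lso \cong L^q(\Omega)/G^q(\Omega)$ and invoking the quotient duality $(X/M)' = M^{\perp}$ together with (\ref{I-A.2a}). You instead reconstruct the cited proposition itself: part b) by four annihilator inclusions, with the de Rham implication $\big[\lso\big]^{\perp} \subseteq G^{q'}(\Omega)$ correctly isolated as the single deep, citable ingredient, and part a) as soft projection duality ($\rng P_q^* = [\ker P_q]^{\perp}$, $\ker P_q^* = [\rng P_q]^{\perp}$, plus uniqueness of the projection attached to a direct-sum decomposition); your structural observation that b) holds unconditionally --- so that deriving a) from b) is not circular --- is exactly the right point, and it is what makes your argument self-contained where the paper has only a citation. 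Two caveats. First, your arguments for both inclusions of $\big[G^q(\Omega)\big]^{\perp} = \lo{q'}$ lean on the divergence-free/vanishing-normal-trace characterization of $\lo{q'}$, which (\ref{I-1.4}) guarantees only for locally Lipschitz domains, whereas the proposition is stated for arbitrary open sets; in that generality you should instead deduce this identity from the de Rham identity $\big[\lso\big]^{\perp} = G^{q'}(\Omega)$ via the bipolar theorem for closed subspaces of reflexive spaces (and run the easy inclusion on compactly supported divergence-free test fields, then close up in $L^{q'}$), which needs no boundary regularity. Second, you leave the Remark --- the one item the paper actually proves --- unaddressed; it drops out of your framework in one line, $\big( \lso \big)' \cong L^{q'}(\Omega)/\big[\lso\big]^{\perp} = L^{q'}(\Omega)/G^{q'}(\Omega) \cong \lo{q'}$, but that step should be stated explicitly, since (\ref{I-A.2b}) is what the paper uses freely throughout.
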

	
	\section{Proof of Theorem \ref{I-Thm-1.6}: maximal regularity of the Oseen operator $\calA_q$ on $\lso$, $1 < p,q < \infty, T < \infty$.}\label{I-app-B}
	\setcounter{equation}{0}
	\renewcommand{\theequation}{{\rm B}.\arabic{equation}}
	\renewcommand{\thetheorem}{{\bf B}.\arabic{theorem}}
	
	\noindent \textbf{Part I}: (\ref{I-1.46}). By (\ref{I-1.41}) with $\psi_0 = 0$
	\begin{equation}\label{I-B.1}
	\psi(t) = \int_{0}^{t} e^{\calA_q(t-\tau)} \Fs(\tau) d \tau.
	\end{equation}
	where by the statement preceding Theorem \ref{I-Thm-1.4}
	\begin{equation}\label{I-B.2}
	\norm{e^{\calA_q(t-\tau)}}_{\calL(\lso)} \leq M e^{b (t - \tau)}, \quad 0 \leq \tau \leq t
	\end{equation}
	for $M \geq 1, \ b$ possibly depending on $q$.\\
	
	\noindent \textit{Step 1}: We have the following estimate
	\begin{equation}\label{I-B.3}
	\int_{0}^{T} \norm{\psi(t)}_{\lso}^p dt \leq C_T \int_{0}^{T} \norm{\Fs(t)}_{\lso}^p dt
	\end{equation}
	\noindent where the constant $C_T$ may depend also on $p,q,b$. This follows at once from the Young's inequality for convolutions \cite[p26]{CS:1979}:
	\begin{equation*}
	\norm{\psi(t)}_{\lso} \leq M \int_{0}^{t} e^{b (t - \tau)} \norm{\Fs(\tau)}_{\lso} d \tau \in L^p(0,T), \ T < \infty, 
	\end{equation*}
	\noindent and the convolution of the $L^p(0,T)$-function $\ds \norm{\Fs}_{\lso}$ and the $\ds L^1(0,T)$-function $e^{bt}$ is in $L^p(0,T)$. More elementary, one can use H\"{o}lder inequality with $\ds \rfrac{1}{p} + \rfrac{1}{\tilde{p}} = 1$ and obtain an explicit constant.\\
	
	\noindent \textit{Step 2}: \underline{Claim}: Here we shall next complement (\ref{I-B.3}) with the estimate 
	\begin{equation}\label{I-B.4}
	\int_{0}^{T} \norm{A_q \psi(t)}_{\lso}^p dt \leq C \int_{0}^{T} \norm{\psi(t)}_{\lso}^p dt + C \int_{0}^{T} \norm{\Fs(t)}_{\lso}^p dt
	\end{equation}
	
	\noindent to be shown below. Using (\ref{I-B.3}) in (\ref{I-B.4}) then yields
	\begin{equation}\label{I-B.5}
	\int_{0}^{T} \norm{A_q \psi(t)}_{\lso}^p dt \leq C_T \int_{0}^{T} \norm{\Fs(t)}_{\lso}^p dt.
	\end{equation}
	
	\noindent With respect to (\ref{I-1.41}) with $\psi_0 = 0$, then (\ref{I-B.5}) says  
	\begin{equation}\label{I-B.6}
	\Fs \in L^p(0,T; \lso) \longrightarrow \psi \in L^p(0,T;\calD(A_q) = \calD(\calA_q))
	\end{equation}\label{I-B.7}
	\noindent while (\ref{I-1.40}) then yields via (\ref{I-B.6})
	\begin{equation}
	\Fs \in L^p(0,T; \lso) \longrightarrow \psi_t \in L^p(0,T;\lso)
	\end{equation}
	\noindent continuously. Then, (\ref{I-B.6}), (\ref{I-B.7}) show is part (i) of Theorem \ref{I-1.6}.\\
	
	
	\noindent \textit{Proof of (\ref{I-B.4}):} . In this step, with $\psi_0 = 0$, we shall employ the alternative formula, via (\ref{I-1.42}) ($\nu = 1$, wlog)
	
	\begin{equation}\label{I-B.8}
	\psi(t) = \int_{0}^{t} e^{-A_q(t-\tau)} (-A_{o,q}) \psi (\tau) d \tau + \int_{0}^{t} e^{-A_q(t-\tau)} \Fs(\tau) d \tau,
	\end{equation}   
	\noindent where by maximal regularity of the Stokes operator $-A_q$ on the space $\lso$, as asserted in Theorem \ref{I-1.5}.ii, Eq (\ref{I-1.35}), we have in particular 
	\begin{equation}\label{I-B.9}
	\Fs \in L^p(0,T; \lso) \longrightarrow \int_{0}^{t} e^{-A_q(t - \tau)} \Fs(\tau) d \tau \in L^p(0,T;\calD(A_q)) \quad \text{continuously.}
	\end{equation}
	
	\noindent Regarding the first integral term in (\ref{I-B.8}) we shall employ the (complex) interpolation formula (\ref{I-1.22}), and recall from (\ref{I-1.9}) that $\calD(A_{o,q}) = \calD(A^{\rfrac{1}{2}}_q)$:
	
	\begin{equation}\label{I-B.10}
	\calD(A_{o,q}) = \calD(A^{\rfrac{1}{2}}_q) = \left[ \calD(A_q), \lso \right]_{\rfrac{1}{2}}
	\end{equation}
	\noindent so that the interpolation inequality \cite[Theorem p 53, Eq(3)]{HT:1980} with $\theta = \rfrac{1}{2}$ yields from (\ref{I-B.10})
	
	\begin{equation}\label{I-B.11}
	\begin{aligned}
	\norm{a}_{\calD(A_{o,q})} = \norm{a}_{\calD \big(A_q^{\rfrac{1}{2}} \big)} & \leq C \norm{a}_{\calD(A_q)}^{\rfrac{1}{2}} \norm{a}_{\lso}^{\rfrac{1}{2}}\\
	&\leq \varepsilon \norm{a}_{\calD(A_q)} + C_{\varepsilon} \norm{a}_{\lso}.
	\end{aligned}
	\end{equation}
	
	\noindent $\Big[$Since $\calD(A_q^{\rfrac{1}{2}}) = W^{1,q}_0(\Omega) \cap \lso$ by (\ref{I-1.22}), then for $a \in \calD(A_q) = W^{2,q}(\Omega) \cap W^{1,q}_0(\Omega) \cap \lso$, see (\ref{I-1.17}), we may as well invoke the interpolation inequality for $W$-spaces. \cite[Theorem 4.13, p 74]{A:1975}:	
	\begin{equation}
	\norm{a}_{\woiq(\Omega)} \leq \varepsilon \norm{a}_{\wtq(\Omega)} + C_{\varepsilon} \norm{a}_{\lso} \nonumber
	\end{equation}
	
	\noindent We return to (\ref{I-B.8}) and obtain 	
	\begin{equation}\label{I-B.12}
	A_q \psi(t) = A_q \int_{0}^{t} e^{-A_q(t-\tau)} (-A_{o,q}) \psi (\tau) d \tau + A_q \int_{0}^{t} e^{-A_q(t-\tau)} \Fs(\tau) d \tau.
	\end{equation} 
	
	\noindent Hence via the maximal regularity of the uniformly stable Stokes semigroup $e^{-A_q t}$, Eq (\ref{I-1.35}), (\ref{I-B.11}) yields	
	\begin{align}
	\norm{A_q \psi}_{L^p(0,T;\lso)} &\leq C \Big\{ \norm{A_{o,q} \psi}_{L^p(0,T;\lso)} + \norm{\Fs}_{L^p(0,T;\lso)} \Big\}\label{I-B.13}\\
	\text{by (\ref{I-B.11})} \hspace{1cm} \quad &\leq \varepsilon' \norm{A_q \psi}_{L^p(0,T;\lso)} + C_{\varepsilon'} \norm{\psi}_{L^p(0,T;\lso)} + C \norm{\Fs}_{L^p(0,T;\lso)} \label{I-B.14}
	\end{align} 
	
	\noindent $\varepsilon' = \varepsilon C > 0$ arbitrarily small. Hence (\ref{I-B.14}) yields	
	\begin{equation}\label{I-B.15}
	\norm{A_q \psi}_{L^p(0,T;\lso)} \leq \frac{C_{\varepsilon'}}{1-\varepsilon'} \norm{\psi}_{L^p(0,T;\lso)} + \frac{C}{1-\varepsilon'} \norm{\Fs}_{L^p(0,T;\lso)}
	\end{equation} 
	
	\noindent and estimate (\ref{I-B.4}) of Step 2 is established. Part I of Theorem \ref{I-Thm-1.6} is proved. \\
	
	\noindent \textbf{Part II}: (\ref{I-1.49}). For simplicity of notation, we shall write the proof on $\ds \Bto$ i.e. for $\ds 1 < q, p < \rfrac{2q}{2q-1}$. The proof on $\ds \lqaq$ in the other case $ \rfrac{2q}{2q-1} < p$ is exactly the same.
	
	\noindent \textit{Step 1}: Let $\eta_0 \in \Bto$ and consider the s.c. analytic Oseen semigroup $e^{\calA_q t}$ on the space $\Bto$, as asserted by Theorem \hyperref[I-Thm-1.4]{1.4.ii} (take $\nu = 1$ wlog):
	
	\begin{equation}\label{I-B.16}
	\eta(t) = e^{\calA_q t} \eta_0, \quad \text{or } \eta_t = \calA_q \eta = -A_q \eta - A_{o,q} \eta.
	\end{equation}
	\noindent Then we can rewrite $\eta$ as
	\begin{align}
	\eta(t) &=  e^{-A_q t} \eta_0 + \int_{0}^{t} e^{-A_q (t - \tau)}(-A_{o,q})\eta(\tau) \ d \tau \label{I-B.17}\\
	A_q \eta(t) &=  A_q e^{-A_q t} \eta_0 + A_q \int_{0}^{t} e^{-A_q (t - \tau)}(-A_{o,q})\eta(\tau) \ d \tau. \label{I-B.18}
	\end{align}
	\noindent We estimate, recalling the maximal regularity (\ref{I-1.35}), (\ref{I-1.36}) as well as the uniform decay (\ref{I-1.25}) of the Stokes operator.
	\begin{align}
	\norm{A_q \eta}_{L^p(0,T;L^q(\Omega))} &\leq  C \norm{\eta_0}_{\Bto} + C \norm{A_{o,q} \eta}_{L^p(0,T;\lso)} \label{I-B.19}\\
	&\leq C \norm{\eta_0}_{\Bto} + \varepsilon \widetilde{C} \norm{A_q \eta}_{L^p(0,T;\lso)} + C_{\varepsilon}\norm{\eta}_{L^p(0,T;\lso)}\label{I-B.20} 
	\end{align}
	\noindent after invoking, in the last step, the interpolation inequality (\ref{I-B.11}). Thus (\ref{I-B.20}) yields via (\ref{I-1.18}) 
	\begin{align}
	\norm{A_q \eta}_{L^p(0,T;\lso)} &= \norm{\calA_q \eta}_{L^p(0,T;\lso)} \nonumber\\
	&\leq \frac{C}{1 - \varepsilon \widetilde{C}} \norm{\eta_0}_{\Bto} + \frac{C_{\varepsilon}}{1 - \varepsilon \widetilde{C}} \norm{\eta}_{L^p(0,T;\lso)}. \label{I-B.21}
	\end{align}
	\noindent \textit{Step 2}: With $\ds \eta_0 \in \Bto$, since $\ds e^{\calA_q t}$ generates a s.c (analytic) semigroup on $\ds \Bto$, Theorem \ref{I-Thm-1.4}.ii, we have 
	\begin{equation}\label{I-B.22}
	\eta(t) = e^{\calA_qt} \eta_0 \in C \Big(0,T; \Bto \Big) \subset L^p \Big(0,T; \Bto \Big) \subset L^p \big(0,T; \lso \big)
	\end{equation} 
	\noindent continuously, where in the last step, we have recalled that $\ds \Bto$ is the interpolation between $\ds L^q(\Omega)$ and $\ds W^{2,q}(\Omega)$, see (\ref{I-1.16b}). (\ref{I-B.22}) says explicitly 
	\begin{equation}\label{I-B.23}
	\norm{\eta}_{L^p \big(0,T; \lso) \big)} \leq C \norm{\eta_0}_{\Bto}
	\end{equation}
	\noindent \textit{Step 3}: Substituting (\ref{I-B.23}) in (\ref{I-B.21}) yields 
	\begin{equation}\label{I-B.24}
	\norm{A_q \eta}_{L^p \big(0,T; \lso \big)} \leq C \norm{\eta_0}_{\Bto}
	\end{equation} 
	\noindent and (\ref{I-1.49}) is established, from which (\ref{I-1.50}) follows at once. Thus Theorem \ref{I-Thm-1.6} is proved. \qedsymbol
	\end{appendices}
	

	\medskip
	Received xxxx 20xx; revised xxxx 20xx.
	\medskip
\end{document}